\newtheorem{Theorem}{Theorem}[section]
\theoremstyle{definition}
\newtheorem{thm}{Theorem}[section]
\newtheorem{cor}[thm]{Corollary}
\newtheorem{lem}[thm]{Lemma}
\newtheorem{prop}[thm]{Proposition}
\newtheorem{defn}[thm]{Definition}
\newtheorem{rem}[thm]{Remark}
\numberwithin{equation}{section}
\def\C{\mathbb C}
\def\N{\mathbb N}
\def\R{\mathbb R}
\def\Z{\mathbb Z}
\def\K{\mathbb K}
\def\G{\mathbb G}
\def\P{\mathbb P}
\newcommand{\CB}{\mathcal{B}}
\newcommand{\CH}{\mathcal{H}}
\newcommand{\CL}{\mathcal{L}}
\newcommand{\CP}{\mathcal{P}}
\def\be{\begin{equation}}
	\def\ee{\end{equation}}
\def\bt{\begin{Theorem}}
	\def\et{\end{Theorem}}
\def\bi{\begin{itemize}}
	\def\ei{\end{itemize}}
\def\bea{\begin{eqnarray}}
	\def\eea{\end{eqnarray}}
\def\beast{\begin{eqnarray*}}
	\def\eeast{\end{eqnarray*}}
\def\ben{\begin{enumerate}}
	\def\een{\end{enumerate}}
\def\bi{\bibitem}
\newcommand{\norm}[1]{\left\Vert#1\right\Vert}
\newcommand\inner[2]{\left\langle #1, #2 \right\rangle}
\def\rar{\rightarrow}
\renewcommand{\MR}[1]{} 
\newcommand{\abs}[1]{\left\vert#1\right\vert}
\begin{document}
	\title[Neveu decomposition and stochastic ergodic theorems]{ On the non-commutative Neveu decomposition and stochastic ergodic theorems }

	\author[Bikram]{Panchugopal Bikram}
	\author[Saha]{Diptesh Saha}

	\date{\today}
	
	\address{School of Mathematical Sciences,
		National Institute of Science Education and Research,  Bhubaneswar, An OCC of Homi Bhabha National Institute,  Jatni- 752050, India}
	
	\email{bikram@niser.ac.in} \email{diptesh.saha@niser.ac.in}

	\keywords{  von Neumann algebras, Neveu decomposition, stochastic ergodic theorem}
	\subjclass[2010]{Primary  46L10; Secondary 46L65, 46L55.}
	
	\begin{abstract} 
	In this article,  we prove  Neveu decomposition for the action of locally compact amenable semigroup of positive contractions on semifinite von Neumann algebras and thus, it entirely resolves the problem for the actions of arbitrary amenable semigroup on semifinite von Neumann algebras. We also prove it for amenable group actions by Markov automorphisms on  any $\sigma$-finite von Neumann algebras.
	As an application, we obtain stochastic ergodic theorem for actions of $ \Z_+^d$ and $\R_+^d$ for $ d \in \N$ by positive contractions on  $L^1$-spaces associated with a finite von Neumann algebra. It yields the first ergodic theorem for positive contraction on non-commutative $L^1$-spaces  beyond the Danford-Schwartz category. 
\end{abstract}

	\maketitle 
	
	\section{Introduction}

	The connection between von Neumann algebra and ergodic theory is well known in the literature. This article falls in the conjunction of these two well studied areas of research. Especially, we study the Neveu decomposition and stochastic ergodic theorems for actions by positive contractions on non-commutative $L^1$-spaces. 
	
	\vspace{0.3cm}	
	Although the study of ergodic theorems originated in the classical mechanics but has wide  applications in modern day mathematics and physics. Historically, the subject begins with  the mean ergodic theorem by von Neumann \cite{neumann1932proof} (which is the convergence  of ergodic averages associated to a contraction in the Hilbert space norm) and the pointwise ergodic theorems by Birkhoff \cite{birkhoff1931proof} (which is the almost everywhere convergence  of ergodic averages associated to measure preserving transformations in $L^p$ spaces). Later it becomes an 
extensively studied field  of research and still remains active. 
	
	\vspace{0.2cm}	
	The main focus of this article is to prove non-commutative Neveu decomposition and as an application to obtain stochastic ergodic theorem (that is convergence in measure). To motivate, we begin with a brief history of pointwise ergodic theorems. After its inception in 1939, pointwise ergodic theorems has seen a lot of generalisations in both classical and non-commutative settings.

\vspace{0.2cm}	
Classically, given a measure preserving system $(X,T, \mu)$, Birkhoff's ergodic theorem states that the ergodic averages associated to the Koopman operator converges almost everywhere to the conditional expectation onto the fixed point space. It is a natural question to ask Birkhoff's ergodic theorem beyond Koopman operator, such as  whether this result holds true for general positive contractions on $L^1$-spaces.
	 
\vspace{0.2cm}
A partial answer is obtained by Hopf, Dunford and Schwartz. An operator $S: L^1 + L^\infty \to L^1 + L^\infty$ is called Dunford-Schwartz operator if it is 
	  a $L^1$-$L^\infty$ contraction.  
	  In \cite{hopf1954general} and \cite{dunford1988linear}, the authors considered any  Dunford-Schwartz operator $S$ associated to a general measure space $(X, \mu)$ and proved the ergodic averages $\frac{1}{n} \sum_{0}^{n-1} S^k(f)$ converge almost everywhere for all $f \in L^1(X, \mu)$.

	\vspace{0.2cm}	
	Although this trend seems promising, but a further extension of the pointwise ergodic theorems for more general positive contraction on $L^1(X, \mu)$ need   not be true. For example, in \cite{chacon1964class}, the authors constructed a class of isometry such that for each such isometry $T$, there exists an $f \in L^1(X, \mu)$ such that the limit of the sequence $\frac{1}{n} \sum_{0}^{n-1} T^k(f)$ fails to exist almost everywhere. Furthermore, in \cite{ionescu1965category}, Tulcea  showed that there are enough examples of positive isometric isomorphisms on $L^1[0, 1]$ for which pointwise ergodic theorem does not hold. But on the contrary, 
	pointwise ergodic theorem holds for positive contractions on  $L^p$-spaces for $ 1< p < \infty $. Indeed, in \cite{akcoglu1975pointwise}, Akcoglu proved the celebrated pointwise convergence result for a positive contraction on classical $L^p$-spaces for $1<p<\infty$.
	Thus, it is natural to look for a satisfactory convergence result for positive contractions on $L^1$-spaces. 
\vspace{0.2cm}	

In 1985, Krengel proved the following:  given a $\sigma$-finite measure spaces $(X, \mu)$ and a positive contraction $T$ defined on $L^1(X, \mu)$,  the sequence $\frac{1}{n} \sum_{k=0}^{n-1} T^k(f)$ converges in measure for all $f \in L^1(X, \mu)$. Actually, Krengel proved it for $d$-many commuting positive contractions on $L^1(X, \mu)$ \cite[ see Theorem 3.4.9]{Krengel1985}. 
\vspace{0.2cm}	
	
 Indeed, Krengel used an elegant  machinery to prove it. In fact, the  main technique of Krengel's theorem is Neveu decomposition. Given a positive contraction $T$ on the space $L^1(X, \mu)$, Neveu decomposition essentially breaks down the space  $X$ into two disjoint sets, determined uniquely upto measure zero sets such that one of them being the support of a positive $T$-invariant function $f \in L^1(X, \mu)$ and the other one being the support of a weakly wandering function $h \in L^\infty(X, \mu)_+$. For more details and relevant definitions we refer to \cite[Theorem 3.4.6]{Krengel1985}.
\vspace{0.2cm}

 Neveu decomposition is also an independent subject of research. Typically, given a $\sigma$-finite measure space $(X, \mu)$ and a transformation $T$ defined on it, the problem of finding a $T$-invariant  finite measure is studied extensively in the literature and simultaneously many necessary and sufficient conditions are obtained in the process. In \cite{Hajian1964}, the authors characterised the existence of finite invariant measure with the non existence of weakly wandering set of strictly positive measures. The result is then extended to arbitrary group of non-singular transformations by Hajian and Ito in \cite{Hajian1968/1969}. On the other hand, given a positive contraction $T$ on $L^1(X, \mu)$, a similar question can be asked. Moreover, observe that the existence of a finite measure $\nu$ which is absolutely continuous with respect to $\mu$, is equivalent to the existence of a  positive $T$-invariant $f$ in $L^1(X, \mu)$. The condition for existence of finite invariant measure in this situation is first studied by Ito \cite{ito1964invariant}. Later on, Neveu and Krengel obtained a proper decomposition of measure space that we have mentioned in the previous paragraph.
\vspace{0.2cm}	

In the non-commutative setting,  a measure space is usually replaced by a von Neumann algebra. Although many necessary and sufficient conditions regarding the existence of finite measure is present in the literature but in the non-commutative setup very little was known until Grabarnik and Katz \cite{MR1336332}. They established Neveu decomposition for finitely many commuting tuples of $*$-automorphisms acting on a finite von Neumann algebra. Recently, in \cite{Bik-Dip-neveu} the authors obtained Neveu decomposition for the actions of  amenable group by  $*$-automorphisms on a finite von Neumann algebra.
\vspace{0.2cm} 

It seems the  known techniques has limitation to generalize the Neveu decomposition for positive contractions and for semifinite von Neumann algebras. 
In this article, we prove Neveu decomposition for the   actions of any amenable semigroup of positive contractions on semifinite  von Neumann algebras. Further, we will also prove  the Neveu decomposition  for  the actions of amenable group which is  compatible with the modular automorphism group associated with a weight on  a $\sigma$-finite von Neumann algebra.  
\vspace{0.2cm}

Let $(M, G, \alpha)$ be a non-commutative dynamical system, where $M$ is a von Neumann algebra  with a faithful, normal tracial state $\tau$ and $\alpha$ is an action of an amenable group $G$ on $M$ by $*$-automorphisms. In \cite{Bik-Dip-neveu}, we showed that the existence of a maximal invariant state can be characterised by an auxiliary infimum condition. In particular, if $\rho$ is the maximal invariant state in this case, then it was shown that the support of $\rho$ is the maximal projection such that for any non-zero subprojection $q$ of support of $\rho$, $\inf_{n \in \N} \tau(A_n(q))>0 $. On the other hand, it was also proved that if for some non-zero projection $p \in M$, $\inf_{n \in \N} \tau(A_n(p))=0$, then there exists a non-zero subprojection $q$ of $p$ such that $\lim_{n \to \infty} \norm{A_n(q)}=0$, that is $q$ is a weakly wandering projection. In the later part, we in effect implicitly found a non-zero projection $q$ and a sequence $\{g_1, \cdots, g_n, \cdots\} \subseteq G$ such that $\alpha_{g_i}(q) \perp \alpha_{g_j}(q)$ whenever $i \neq j$. Note that since $\alpha_{g_i}$'s are $*$-automorphisms, $\alpha_{g_i}(q)$'s are also projections. Furthermore, the existence of tracial state is also heavily used to find the projection and the sequence mentioned above. For more details and rigorous proofs of this facts we refer to \cite[Section 3]{Bik-Dip-neveu}. To prove a similar result, when $(M, G, \alpha)$ is a non-commutative dynamical system ( See Definition \ref{action on Banach sp} ), one has to first overcome the aforementioned technical difficulties. In this article, we essentially take a different approach. With the help of Lemma \ref{main lem-semifinite2} and Lemma \ref{req cpt set}, we inherently proved that there exits a non-zero projection $ q \in M$ such that 
\begin{align*}
	\lim_{n \rar \infty }\mu ( A_n(q)) = 0, \text{ for all } \mu \in M^*.
\end{align*}
Then a version of Mean ergodic theorem for Banach spaces (Theorem \ref{mean erg thm}) is implemented to show that $q$ is weakly wandering.
\vspace{0.2cm}

The second half of the article is dedicated to obtain the  Krengel's stochastic ergodic theorem for actions by some semigroup of positive contractions on non-commutative $L^1$-spaces. The main tool of this proof is the Neveu decomposition and together with a version of pointwise ergodic theorem on the corner  where there is an  invariant state according to the Neveu decomposition.  Equivalently, we need to prove a non-commutative pointwise ergodic theorem for a non-commutative dynamical system associated with an amenable semigroup preserving a faithful normal state on a von Neumann algebra. 
\vspace{0.2cm}

We have already discussed about the pointwise ergodic theorem for classical $L^1$-spaces for a single positive contraction. Further generalisation of this result for various group actions and other $L^p$-spaces ($1< p < \infty$) are also hugely studied in the literature. For action of amenable groups, Lindenstrauss \cite{Lindenstrauss2001} proved that the ergodic averages associated to tempered F\o lner sequence converges almost everywhere for all $f$ in classical $L^1$-spaces. For more information and other generalizations and state-of-the-art results we refer to \cite{krause2022pointwise}, \cite{anantharaman2010theoremes}, \cite{Lindenstrauss2001},  \cite{Calderon-ergodic}, \cite{tao-ergodic} and the references therein. 
\vspace{0.2cm}

For the non-commutative setup these results are also extensively studied. The study of non-commutative  ergodic theorems were initiated by the pioneering work of Lance \cite{Lance1976}. In this article, a pointwise ergodic theorem is studied on a von Neumann algebra. This result is then improved substantially in the works of Yeadon, Kummerer, Conze, Dang-Ngoc and many others [see, \cite{Yeadon1977}, \cite{Yeadon1980}, \cite{Kuemmerer1978}, \cite{Conze1978} and the references therein]. Particularly, Yeadon in  \cite{Yeadon1977} and \cite{Yeadon1980} extended the results of Lance \cite{Lance1976} to the non-commutative $L^1$-spaces. After that, Junge and Xu \cite{Junge2007} extended Yeadon's result to prove a pointwise ergodic theorem in non-commutative $L^p$-spaces (both tracial and non-tracial) for $1< p < \infty$ for some specific $\Z^d_+$ of $\R^d_+$ ($d \geq 1$) actions. In recent times, these results are further generalised for the actions of locally compact  groups with polynomial growth (in this case as earlier the actions are assumed to be sub-tracial) in \cite{Hong2021} on tracial non-commutative $L^p$-spaces ($1\leq p < \infty$). Furthermore, using the Calderon-Zygmund decomposition and variaous norm estimates of the elements of non-commutative $L^1$-spaces, this result is further extended for actions of amenable groups and the averages associated to some filtered F\o lner sequence in \cite{cadilhac2022noncommutative}. Recently, in  \cite{samya-lamperti2023}, the authors established  first   ergodic theorem for  large class of contractions beyond the Danford-Schwartz category on  non-commutative $ L^p$-spaces for $ 1 < p < \infty$.

\vspace{0.2cm}

Thus, available results regarding  pointwise ergodic theorems on non-commutative $L^1$-spaces  fall in the Dunford-Schwartz category.  So far there is no ergodic theorem for positive contraction on non-commutative $L^1$-spaces. Indeed, in the literature it is known to be challenging to obtain an ergodic theorem for semigroup of positive contractions on non-commutative $L^1$-spaces, even for a single positive contraction and it is anticipated that it will require  non-trivial new approaches other than \cite{Junge2007}, \cite{Hong2021}, \cite{Bik-Dip-neveu} etc.
\vspace{0.2cm}

In this article, we obtain a satisfactory ergodic theorem for positive $L^1$-contractions associated with a tracial state. In fact, to prove it, the first non-trivial difficulty  was to prove Neveu decomposition for positive contraction and the second one was to prove a pointwise ergodic theorem for state preserving positive  contractions. 
In our proof, we extensively use the Neveu decomposition and a version of pointwise ergodic theorem which is mainly proved in our previous article \cite{Bik-Dip-neveu} and \cite{bikram2023noncommutative}.
\vspace{0.2cm}

Now we  highlight some of our main results to  get more idea about it.  
Let $M$ be a $\sigma$-finite von Neumann algebra and $G$ be a locally compact, second countable, Hausdorff semigroup with a both left and right invariant $\sigma$-finite measure $m$ and left-right F\o lner net $
\{K_l\}_{l \in \R_+}$. Furthermore, let $\alpha$ be an action of $G$ on $M$ by positive contractions, then we have the following Neveu decomposition.

\begin{thm}[Neveu Decomposition]\label{into-Neveu}
	Let $ (M, G, \alpha)$ be as above and suppose it falls one of the following category.
	\begin{itemize}
		\item[(a)] $M$ be a semifinite von Neumann algebra and $( M, G , \alpha) $ be a non-commutative dynamical system, i.e, $G$ is a  semigroup and $\alpha$  is is the action by positive contractions, or 
		\item[(b)] $( M, G , \alpha) $ be a Markov covariant system, i.e, $G$ is a group and the action commute with the modular automorphisms group associated to a f.n weight on $M$. 
	\end{itemize}
	Then there  exist two projections $e_1, e_2 \in M$ such that $e_1 + e_2 = 1$ and 
	\begin{enumerate}
		\item there exists a $G$-invariant normal state $\rho$ on $M$  with support $s(\rho) = e_1$ and
		\item there exists a weakly wandering operator $x_0 \in M$ with support $s(x_0)= e_2$, i.e,
		$ \frac{1}{m(K_n) } \int_{K_n} \alpha_g(x_0) d m(g)$ converges to $0$ in norm as $ n \rar \infty $. 
	\end{enumerate}
	Further, in both the cases $s(\rho)$ and $s(x_0)$ are  unique. 
\end{thm}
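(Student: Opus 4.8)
The plan is to build the decomposition from a maximality argument for invariant states and to reduce the construction of the dissipative (weakly wandering) part to a weak-to-norm mean ergodic statement. Throughout, write $A_n(x)=\frac{1}{m(K_n)}\int_{K_n}\alpha_g(x)\,dm(g)$ for the F\o lner averages; since each $\alpha_g$ is a positive contraction, every $A_n$ is a positive contraction on $M$, and a normal state $\rho$ is $G$-invariant exactly when $\rho\circ A_n=\rho$ for all $n$. First I would produce $e_1$ and $\rho$. Let $e_1=\bigvee\{s(\rho):\rho\text{ a }G\text{-invariant normal state on }M\}$, with $e_1=0$ if no such state exists. Because $M$ is $\sigma$-finite, this supremum is attained along a countable subfamily $\{\rho_k\}$, and the normal state $\rho=\big(\sum_k 2^{-k}\big)^{-1}\sum_k 2^{-k}\rho_k$ is again $G$-invariant with $s(\rho)=\bigvee_k s(\rho_k)=e_1$. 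This yields item (1) with $e_1=s(\rho)$, and by construction $e_1$ is the largest projection carrying a $G$-invariant normal state.

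Next, set $e_2=1-e_1$ and reduce item (2) to the dichotomy: \emph{every non-zero projection $p\le e_2$ dominates a non-zero weakly wandering projection.} Fix such a $p$. By maximality of $e_1$, no non-zero subprojection of $p$ can support a $G$-invariant normal state, and this absence of invariant states is precisely the input to Lemma \ref{req cpt set} and Lemma \ref{main lem-semifinite2}; together they furnish a non-zero projection $q\le p$ with $\mu(A_n(q))\to 0$ for every $\mu\in M^*$, i.e. $A_n(q)\to 0$ in the weak topology of $M$. The Banach-space mean ergodic theorem (Theorem \ref{mean erg thm}) then upgrades weak convergence of the averages to norm convergence, so $\|A_n(q)\|\to 0$ and $q$ is weakly wandering. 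In case (a) the two lemmas are carried out directly in the semifinite algebra, whereas in case (b) it is the Markov covariance, the commutation of $\alpha$ with the modular automorphism group of the f.n.\ weight, that makes the same compactness and selection arguments available on a merely $\sigma$-finite algebra. I expect this dichotomy to be the main obstacle: for genuine positive contractions the images $\alpha_g(q)$ are no longer projections, so the orthogonal-translates construction used for $*$-automorphisms in \cite{Bik-Dip-neveu} is unavailable, and the entire weight of the argument shifts onto establishing the weak convergence $A_n(q)\to 0$, after which Theorem \ref{mean erg thm} performs the weak-to-norm passage.

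With the dichotomy in hand I would exhaust $e_2$. Choose, via Zorn's lemma, a maximal family of mutually orthogonal non-zero weakly wandering projections under $e_2$; by $\sigma$-finiteness it is countable, say $\{q_k\}_{k\ge 1}$, and its sum must equal $e_2$, since otherwise the leftover projection would admit a further weakly wandering subprojection, contradicting maximality. Setting $x_0=\sum_k c_k q_k$ with positive scalars $c_k\downarrow 0$ gives $x_0\in M_+$ with $s(x_0)=e_2$; weak wandering of $x_0$ follows by splitting $A_n(x_0)$ into the finite sum $\sum_{k\le N}c_k A_n(q_k)$, which tends to $0$ in norm as each $A_n(q_k)$ does, and a tail bounded by $\sup_{k>N}c_k$ using that $A_n$ is a contraction, which proves (2).

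Finally, for uniqueness I would argue that any $G$-invariant normal state $\rho$ and any positive weakly wandering $x$ satisfy $\rho(x)=\lim_n\rho(A_n(x))=0$, the first equality by invariance of $\rho$ and the limit by norm convergence of $A_n(x)$ to $0$; since $\rho$ is faithful on $s(\rho)$ and $x\ge 0$, this forces $s(\rho)\,x\,s(\rho)=0$, hence $s(x)\le 1-s(\rho)$. Consequently every invariant-state support lies under $e_1$ and every weakly wandering support lies under $e_2$, so in any decomposition satisfying (1) and (2) the projection $s(\rho)$ coincides with the maximal invariant support $e_1$ and $s(x_0)=1-e_1=e_2$, giving uniqueness of both supports.
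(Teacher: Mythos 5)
For case (a) your outline is essentially the paper's own argument: the paper also characterizes $e_1$ as the maximal support of an invariant normal state (Proposition \ref{existence of inv normal state} and Theorem \ref{supp of maximal state}), produces under any $p\le 1-e_1$ a subprojection $q'$ with $\theta(A_{n_k}(q'))\to 0$ for every $\theta\in M^*$ by combining Lemma \ref{req cpt set} and Lemma \ref{main lem-semifinite2}, upgrades to norm convergence via Theorem \ref{mean erg thm}, and exhausts $e_2$ by a maximal orthogonal family exactly as you do (Theorem \ref{weakly-wandering}). One elision worth flagging: the two lemmas only kill the \emph{singular} functionals at $q'$; the normal functionals are handled separately (one first finds $q\le p$ with $\inf_n\varphi(A_n(q))=0$, passes to a subsequence with $A_{n_k}(q)\to 0$ in SOT, and then a Banach--Alaoglu argument on the averages of the singular part, using $G$-invariance of its weak*-limit points, is needed before the lemmas apply). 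This is a compression, not an error, and your uniqueness argument matches Lemma \ref{support of state and support of ww}.

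Case (b) is where your proposal has a genuine gap. You assert that the Markov condition ``makes the same compactness and selection arguments available on a merely $\sigma$-finite algebra,'' but no mechanism is given, and the statement is false as it stands: Lemma \ref{main lem-semifinite2} is intrinsically semifinite --- its proof picks a subprojection $p$ with $\tau(p)<\infty$ and runs the whole separation and barycenter argument against the states $\tau_p$ built from the trace, so on a type III algebra (where the f.n.\ weight $\psi$ is not a trace) there is nothing to substitute. The paper's actual route is a reduction: extend the action to the crossed product $N=M\rtimes_{\sigma^\psi}\R$, which \emph{is} semifinite, using the commutation $\alpha_g\circ\sigma_t^\psi=\sigma_t^\psi\circ\alpha_g$ (Lemma \ref{extention}); apply the case (a) decomposition in $N$ (Theorem \ref{neveu decomposition}); compress the weakly wandering operator of $N$ back into $M$ via $1\otimes p_\xi$ (Lemma \ref{weakly-wandering-proj}); and then average over both $G$ and the modular flow (Lemma \ref{weakly-wandering-op}) so that the support of the resulting wandering operator is invariant under $\alpha$ and $\sigma^\psi$. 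This last point also breaks your exhaustion step: in case (b) the machinery yields \emph{some} weakly wandering projection, not one under an arbitrarily prescribed $p\le e_2$, so your Zorn's-lemma family of orthogonal wandering projections under $e_2$ cannot be fed; the paper instead iterates over invariant corners $(1-z)M(1-z)$, which are again Markov systems precisely because $z$ is fixed by both $\alpha$ and $\sigma^\psi$, and this invariance is what the averaging in Lemma \ref{weakly-wandering-op} is engineered to produce (Theorem \ref{neveu decomposition-1}). Without the crossed-product lift and the invariant-corner exhaustion, your case (b) argument does not go through.
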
	
\vspace{0.2cm}

We note that the proof of the Neveu decomposition for  	$( M, G , \alpha) $, when it falls in the second category use the first category by lifting it to the cross product of $M$ with it's modular automorphisms group, as the cross product of $M$ with it's modular automorphisms group is a semifinite von Neumann algebra. 
\vspace{0.2cm} 	

Now let $M$ be a finite von Neumann algebra and  note that $( M, \Z^d_+, \alpha )$ is determined by  $d$-commuting positive contractions  $ \alpha_1, \alpha_2, \cdots ,\alpha_d$ on $M$ such that $$ \alpha_{( i_1, \cdots, i_d)} (\cdot) = \alpha_1^{i_1} \alpha_2^{i_2}\cdots \alpha_d^{i_d} (\cdot) \text{ for }  ( i_1, \cdots, i_d) \in \Z_+^d.$$ For the dynamical system 
$(M, \R^d_+, \alpha)$, we consider the ergodic avarages with respect to the set  $Q_a:= \{(t_1, \ldots, t_d) \in \R^d_+ : t_1<a, \ldots, t_d< a\}$  for $a \in \R_+$. Thus, with the preceding notations, consider following ergodic averages;

\begin{align*}
	A_a(\cdot):= 	
	\begin{cases}
		\frac{1}{a^d} \sum_{0 \leq i_1 <a} \cdots  \sum_{0 \leq i_d <a} \alpha_1^{i_1} \cdots \alpha_d^{i_d} (\cdot)&\text{ when  } G = \Z_+^d,~ a \in \N, \\\\
		\frac{1}{a^d} \int_{Q_a} \alpha_t(\cdot) dt &  \text{  when } G = \R_+^d, ~a \in \R_+.
	\end{cases}
\end{align*}	

Then following stochastic ergodic theorem

\begin{thm}[Stochastic Ergodic Theorem]\label{stocha-into}  With the preceding notations, let $e_1, e_2 \in M$ be projections obtained in Neveu decomposition  \ref{into-Neveu}. Then we have the following results.  
	\begin{enumerate}
		\item[(i)] For all $B \in L^1(M_{e_1}, \tau_{e_1})$, there exists $\bar{B} \in L^1(M_{e_1}, \tau_{e_1})$ such that $A_n^*(B)$ converges b.a.u to $\bar{B}$. Moreover,  $A_n(B)$ converges in measure to $\bar{B}$.
		\item[(ii)] For all $B \in L^1(M_{e_2}, \tau_{e_2})$, $A_n^*(B)$ converges to $0$ in measure.
	\end{enumerate}
\end{thm}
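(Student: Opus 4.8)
The plan is to run the argument corner by corner, using the Neveu decomposition $1 = e_1 + e_2$ from Theorem~\ref{into-Neveu} to split $L^1(M,\tau) = L^1(M_{e_1},\tau_{e_1}) \oplus L^1(M_{e_2},\tau_{e_2})$, and to analyse the averages on each summand separately. Throughout I use that $A_n^{*}$, being an average of preadjoints of the normal positive contractions $\alpha_g$, is itself a positive contraction on $L^1$, so $\|A_n^{*}(B)\|_1 \le \|B\|_1$ for every $n$; by decomposing $B$ into its positive and negative (and real and imaginary) parts it suffices to treat $B \ge 0$. The two halves require genuinely different inputs: on $e_1$ the existence of the invariant state places us in the state-preserving regime where an individual ergodic theorem is available, while on $e_2$ the weakly wandering operator must be converted, by duality, into a statement about the $L^1$-averages.

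For part (i), the restriction of $\alpha$ to the corner $M_{e_1}$ preserves the normal state $\rho$, which is faithful there because $s(\rho)=e_1$; one first checks that $e_1$ is an $\alpha$-invariant corner, so that $A_n$ and $A_n^{*}$ genuinely act on $M_{e_1}$ and on $L^1(M_{e_1},\tau_{e_1})$. This puts us exactly in the hypotheses of the individual (b.a.u.) ergodic theorem for state-preserving positive contractions established in our previous work, which yields a $\bar B \in L^1(M_{e_1},\tau_{e_1})$ with $A_n^{*}(B) \to \bar B$ bilaterally almost uniformly. The mean ergodic theorem (Theorem~\ref{mean erg thm}) identifies $\bar B$ as the projection of $B$ onto the fixed-point space and produces $L^1$-mean convergence to the same limit, so the b.a.u.\ limit and the mean limit agree. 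Since b.a.u.\ convergence implies convergence in measure, the ``moreover'' clause for $A_n(B)$ follows with the same limit, the forward averages being handled through the $\rho$-standard form on the conservative corner.

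The heart of the matter is part (ii). Fix $B \in L^1(M_{e_2},\tau_{e_2})$ with $B \ge 0$ and put $y_n := A_n^{*}(B) \ge 0$. The decisive observation is the duality identity
\[
\tau\bigl(y_n\, x_0\bigr) = \tau\bigl(A_n^{*}(B)\, x_0\bigr) = \tau\bigl(B\, A_n(x_0)\bigr) \le \|B\|_1\,\bigl\|A_n(x_0)\bigr\|_\infty \xrightarrow[n\to\infty]{} 0,
\]
where the last limit is precisely the weakly wandering property of $x_0$ from Theorem~\ref{into-Neveu}. Hence $\tau(x_0^{1/2} y_n x_0^{1/2}) = \tau(y_n x_0) \to 0$, i.e.\ $x_0^{1/2} y_n x_0^{1/2} \to 0$ in $L^1$. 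Now I exploit that $x_0$ has \emph{full} support $e_2$: for $c>0$ set $f_c := \chi_{[c,\infty)}(x_0)$, so that $x_0^{1/2}$ is bounded below by $\sqrt c$ on the corner $f_c M f_c$. Compressing by $f_c$ (which commutes with $x_0$) and inverting $x_0^{1/2}$ there gives
\[
\bigl\| f_c\, y_n\, f_c \bigr\|_1 \le \tfrac1c\, \tau\bigl(y_n x_0\bigr) \xrightarrow[n\to\infty]{} 0.
\]
Since $f_c \uparrow e_2$ as $c \downarrow 0$, the projection $e_2 - f_c$ has arbitrarily small trace; because convergence in measure is insensitive to excising a projection of small trace, the two facts $\|f_c y_n f_c\|_1 \to 0$ and $\tau(e_2 - f_c)$ small combine to give $y_n \to 0$ in measure. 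This proves (ii), and by the reduction to positive parts it holds for every $B \in L^1(M_{e_2},\tau_{e_2})$.

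I expect the main obstacle to be exactly this bridge in part (ii): the weakly wandering hypothesis is a norm statement about the \emph{forward} averages $A_n$ on $M$, whereas the conclusion is an in-measure statement about the \emph{adjoint} averages $A_n^{*}$ on $L^1$, and these act on opposite sides. Pairing against $x_0$ is what transfers the information across, but it only yields control weighted by $x_0$; recovering genuine in-measure convergence then rests on the full support of $x_0$ and on the spectral truncation above, together with the standard fact that a positive sequence tending to $0$ in $L^1$ after compression by projections of trace approaching $\tau(e_2)$ tends to $0$ in measure. A secondary technical point, needed before either part can start, is the verification that $e_1$ and $e_2$ are invariant corners, so that the restricted dynamical systems and their preadjoints are well defined; this is where the normality and positivity of the $\alpha_g$, together with the invariance of $\rho$, are used.
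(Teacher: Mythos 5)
Your proposal is correct and follows essentially the same route as the paper: part (i) by checking the invariance of the $e_1$-corner, restricting to the $\rho$-preserving system on $M_{e_1}$, and invoking the b.a.u.\ ergodic theorem (Theorem \ref{bau conv theorem}) together with Remark \ref{a.u. stronger than m}; part (ii) by the duality pairing $\tau(A_n^*(B)x_0)=\tau(B\,A_n(x_0))\to 0$ against the weakly wandering operator, a spectral cut of $x_0$ whose complement has small trace, and a Chebyshev-type truncation converting the compressed $L^1$-smallness into operator-norm smallness on a projection of almost full trace. The paper merely reorders your part (ii): it first fixes the cut $p=\chi_{(1/m,\infty)}(x_0)$ with $\frac{1}{m}p\le x_0$, deduces $\norm{A_n(p)}\to 0$, bounds $\tau(pA_n^*(B)p)\le \tau(B)\norm{A_n(p)}$, and then excises $r_n=\chi_{[\epsilon,\infty)}(pA_n^*(B)p)$ --- which is exactly your ``standard fact'' made explicit.
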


Here we refer to \S 2 for the definition of b.a.u and stochastic convergence and basic facts of non-commutative $L^p$-spaces.
Further, we point out that if for all $g \in G$, $ \alpha_g^* : L^1(M, \tau) \rar L^1(M, \tau) $ is Lamperti positive contraction (i.e,  $\alpha_g^*(A) \alpha_g^*(B ) = 0$, whenever $A, B \in L^1(M, \tau)_+ $ with $AB = 0$), then  we have a stronger version of  stochastic ergodic theorem. Indeed, we have the following .

	\begin{thm}[Strong Stochastic Ergodic Theorem]\label{conv in e1-e2 corner} Let $M$, $G$ and $\alpha$ as in the Theorem \ref{stocha-into} and assume that for  each $g \in G$, $\alpha_g^*$ is lamperti operator.  	Let $X \in L^1(M, \tau)$, then there exists $\overline{X} \in L^1(M, \tau)$ such that $A_n(X)$ converges to $\overline{X} $ in measure. Further, suppose 
	$e_1, e_2 \in M$ are as in the Theorem \ref{into-Neveu},  then  $e_1   \overline{X}e_1 = \overline{X} $ and $e_2 \overline{X}e_2= 0$. 
\end{thm}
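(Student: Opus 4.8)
The plan is to combine the Neveu decomposition (Theorem \ref{into-Neveu}) with the two halves of the stochastic ergodic theorem (Theorem \ref{stocha-into}), exploiting the extra rigidity that the Lamperti hypothesis gives us. First I would record the basic structural consequence of the Lamperti property. Since each $\alpha_g^*$ is a positive Lamperti contraction on $L^1(M,\tau)$ and $e_1, e_2$ are the mutually orthogonal $G$-invariant projections with $e_1 + e_2 = 1$ from the Neveu decomposition, I expect the corner decomposition $X = e_1 X e_1 + e_1 X e_2 + e_2 X e_1 + e_2 X e_2$ to be respected by the averages in a way that lets the off-diagonal corners ($e_1 X e_2$ and $e_2 X e_1$) be controlled together with the diagonal ones. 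The key point I would establish is that the projections $e_1,e_2$, being supports of a $G$-invariant state and of a weakly wandering operator respectively, reduce the action: the Lamperti property forces $\alpha_g^*$ to map each corner $e_i L^1(M,\tau) e_j$ into itself (this is the analogue of a non-singular transformation permuting atoms, and is exactly where the disjointness-preservation of Lamperti operators is used). This reduction is what I expect to be the main obstacle, since one must verify that the band-preserving structure of a Lamperti map on the commutative diagonal propagates to the full corner decomposition of the non-commutative $L^1$-space.

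Granting that reduction, I would treat the four corners separately. On the $e_1$-corner the operator $\alpha_g^*$ restricts to a positive Lamperti contraction on $L^1(M_{e_1}, \tau_{e_1})$ preserving the invariant state $\rho$, so part (i) of Theorem \ref{stocha-into} applies and gives an element $\overline{e_1 X e_1} \in L^1(M_{e_1},\tau_{e_1})$ with $A_n^*(e_1 X e_1)$ converging in measure (indeed b.a.u.). On the $e_2$-corner, part (ii) of Theorem \ref{stocha-into} gives that $A_n^*(e_2 X e_2)$ converges to $0$ in measure. For the off-diagonal corners I would argue that the averages converge to $0$ in measure as well: the $e_2$-side is weakly wandering, so any corner touching $e_2$ is dominated (via the Cauchy--Schwarz / positivity estimates for Lamperti maps and the defining property $\frac{1}{m(K_n)}\int_{K_n}\alpha_g(x_0)\,dm(g) \to 0$) by the vanishing averages on the $e_2$-corner. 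Assembling the four pieces yields $\overline{X} := \overline{e_1 X e_1}$ and convergence of $A_n(X)$ to $\overline{X}$ in measure, since convergence in measure is stable under finite sums.

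Finally I would verify the stated identities $e_1 \overline{X} e_1 = \overline{X}$ and $e_2 \overline{X} e_2 = 0$. These follow immediately from the construction: the limit was produced entirely inside the $e_1$-corner, so $\overline{X} = e_1 \overline{X} e_1$ by definition, and consequently $e_2 \overline{X} e_2 = e_2 (e_1 \overline{X} e_1) e_2 = 0$ by orthogonality of $e_1$ and $e_2$. The only subtlety worth flagging is that the adjoint $\alpha_g^*$ acts on $L^1$ while the Neveu projections $e_1, e_2$ live in $M$; one must check that the $G$-invariance of $e_1,e_2$ as projections in $M$ (equivalently, invariance of $s(\rho)$ and $s(x_0)$ under the predual action) is compatible with the $L^1$-level averages, so that the corner-preservation and the two applications of Theorem \ref{stocha-into} are legitimate. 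I would expect this compatibility to be a direct consequence of the duality between $\alpha_g$ on $M$ and $\alpha_g^*$ on $L^1(M,\tau)$ together with the uniqueness of $s(\rho)$ and $s(x_0)$ asserted in Theorem \ref{into-Neveu}.
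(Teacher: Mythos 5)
Your overall architecture coincides with the paper's: decompose along the Neveu projections, apply Theorem \ref{stocha-into}(i) on the $e_1$-corner and (ii) on the $e_2$-corner, kill the off-diagonal corners by a Cauchy--Schwarz domination coming from the weakly wandering side, and read off $e_1\overline{X}e_1=\overline{X}$, $e_2\overline{X}e_2=0$ from the construction. However, your stated ``key point'' --- that the Lamperti hypothesis forces $\alpha_g^*$ to map \emph{each} corner $e_iL^1(M,\tau)e_j$ into itself --- is both stronger than what is available and stronger than what is needed. What the paper proves (Proposition \ref{inv-lamperti} and Corollary \ref{proj-inv}) is the diagonal compression identity $e_i\,\alpha_g^*(X)\,e_i=\alpha_g^*(e_iXe_i)$ for $i=1,2$; for the off-diagonal pieces one only gets that the \emph{diagonal} compressions of $\alpha_g^*(e_1Xe_2)$ vanish, which does not place $\alpha_g^*(e_1Xe_2)$ inside $e_1L^1(M,\tau)e_2$. (Note also that the $e_1$-corner reduction does not use Lamperti at all; Lamperti is what rescues the $e_2$-corner, via $\alpha_g(e_1)\leq e_1$.) This matters for your off-diagonal step: the object you must control is the compression $e_1A_n(X)e_2$ of the single positive operator $A_n(X)$, \emph{not} $A_n(e_1Xe_2)$ --- these differ in general, and a domination argument applied to $A_n(e_1Xe_2)$ would fail because the maps $\alpha_g^*$ are only positive (not $2$-positive), so no operator Cauchy--Schwarz is available at the level of the map.

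The paper's Lemma \ref{rne1A_ne2rn is bdd} supplies the mechanism your gesture omits: first reduce to $X\geq 0$ so that $A_n(X)\geq 0$, then factor $p\,e_1A_n(X)e_2\,q_n=\bigl(p\,e_1A_n(X)^{1/2}\bigr)\bigl(A_n(X)^{1/2}e_2\,q_n\bigr)$ and bound each factor by the corresponding diagonal compression, $\norm{p\,e_1A_n(X)^{1/2}}^2\leq \epsilon+\norm{pe_1\overline{X}e_1p}$ and $\norm{A_n(X)^{1/2}e_2q_n}^2\leq\epsilon$, where $p$ comes from the b.a.u.\ convergence on the $e_1$-corner and $q_n$ from the weakly wandering operator on the $e_2$-corner (Remark \ref{rem on conv in L1}). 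A single common sequence of projections $r_n=p+q_n$ with $\tau(1-r_n)<\delta$ then handles all four corners of $r_n(A_n(X)-\overline{X})r_n$ simultaneously, which is exactly the form demanded by the definition of convergence in measure; your alternative assembly via Proposition \ref{bau convergence closed under addition and mult} would also work, but only once each off-diagonal compression has been estimated as above. With the corner-invariance claim weakened to the diagonal identity of Corollary \ref{proj-inv}, the explicit reduction to positive $X$, and the square-root factorization made precise, your proposal becomes the paper's proof.
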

\vspace{0.2cm} 		

Now we will discuss the arrangement of this article. In \S 2, We gather all of the materials required for this article. 
We recall, in particular, some fundamental facts concerning states defined on a von Neumann algebra and non-commutative $L^1$-spaces associated with a faithful normal semifinite trace.  
In the same section we also discuss the stochastic and bilateral almost uniform convergence.
\S 3  and \S 4 are  devoted to study  the Neveu decomposition. In \S 5, we discuss some examples. 
In \S 6 we discuss and recall pointwise ergodic theorems.  In the final section, we combine the previous section's conclusions to show the stochastic ergodic theorem.

	\section{Preliminaries}
		
\subsection{von Neumann algebra and linear functional} 
In this article, $\CH$ will denote a separable Hilbert space  and $M (\subseteq \CB(\CH))$ will represent a von Neumann algebra. $M$ possesses several locally convex topologies along with the norm topology induced from $\CB(\CH)$. In this paper, $\norm{\cdot}$ will denote the norm on $M$ as well as on $\CB(\CH)$. $ M_+$ will denote the set of all positive elements of $M$. For the definition of these topologies and various other facts regarding von Neumann algebras, we refer to \cite[chapter 1]{Stratila1979}. 

Write $M_*$ for the set of all $w$-continuous linear functionals on $M$. It is a norm closed subspace of $M^*$, the Banach space dual of $M$. The dual space norm on $M^*$ and its restriction to $M_*$ will be denoted by $\norm{\cdot}_1$ in the sequel. Then  $M$ is isomorphic to $ (M_*)^*$ via a canonical bi-linear form defined on $M \times M_*$. For the proof of this fact, we again refer to \cite[Lemma 1.9]{Stratila1979}. Under this identification, $M_*$ is called the predual of $M$.

A linear functional $\varphi$ on $M$ is called positive if $\varphi(x^*x) \geq 0$ for all $x \in M$ and we refer it by $\varphi \geq 0$.  A  positive linear functional $\varphi$ is called  state if $\varphi(1)=1$. The  elements of $M_*$ will be called   normal linear functionals. A self-adjoint linear functional is a finite linear combination of positive linear functionals. The set of all normal positive  linear functionals will be denoted by $M_{* +}$ and self-adjoint elements of $M_*$ will be denoted by $M_{* s}$.

The support of a self-adjoint operator  $x$ in $M$, henceforth denoted by $s(x)$, is the smallest projection in $M$ such that $s(x)x=x$, thus, equivalently, $xs(x)=x$. On the other hand, for a positive normal linear functional $\varphi$, the set $\{ e \in M: e \text{ is a projection and } \varphi(e)=0 \}$ is increasingly directed and if the projection  $p \in M$ be its least upper bound, then one can show that $\varphi(p)=0$. Then the projection $1-p$ is called the support of $\varphi$ and it will be denoted by $s(\varphi)$ in the sequel. A positive linear functional $\varphi$ is said to be faithful if for all $x \in M_+$ with $\varphi(x)=0$ implies $x=0$. It is also well-known that a positive normal linear functional $\varphi$ is faithful on $s(\varphi) M s(\varphi)$. For more details we refer to \cite{Stratila1979}. We abbreviate the faithful normal  linear functional on $M$  as  f.n linear functional. 
The collection of all (resp. non-zero) projections  in a von Neumann algebra $M$ will be denoted by $\CP(M)$ (resp. $\CP_0(M)$). Let $p, q \in \CP(M)$, if $pq= 0$, i.e, $p$ and $q$ are orthogonal then we write $p\perp q$.

We now briefly recall singular linear functionals on $M$ and describe the decomposition of a positive linear functional into normal and singular ones. The following materials regarding  singular linear functionals are taken from \cite{Takesaki1958, Takesaki2002}.
\begin{defn}\cite{Takesaki1958}
	A positive linear functional $\varphi$ on a von Neumann algebra $M$ is said to be singular if there exists a positive normal linear functional $\psi$ on $M$ such that $(\varphi - \psi) \geq 0$, then $\psi=0$.
\end{defn}
We will frequently use the following characterisation of a singular linear functional on $M$.

\begin{thm}\cite[Theorem 3.8, pp. 134]{Takesaki2002}\label{defining property of singular functional}
	A positive linear functional $\varphi$ on $M$ is singular if and only if for any non-zero projection $e \in M$, there exists a non-zero projection $f \leq e$ in $M$ such that $\varphi(f)=0$.
\end{thm}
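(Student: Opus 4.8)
The plan is to treat the two implications separately, since they are of very different character. Throughout, recall the reading of the definition: $\varphi$ is singular precisely when the only normal positive functional $\psi$ with $\varphi-\psi\ge 0$ is $\psi=0$; equivalently, $\varphi$ dominates no nonzero normal positive functional. I will dispose of the ``if'' part (the projection condition forces singularity) directly and quickly, and then spend the real effort on the ``only if'' part (singularity forces the projection condition), which is the substance of the statement.

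For the ``if'' direction, suppose that below every nonzero projection there is a nonzero projection annihilated by $\varphi$, and let $\psi$ be any normal positive functional with $\varphi-\psi\ge 0$. If $\psi\neq 0$, set $e=s(\psi)$, a nonzero projection. By hypothesis there is a nonzero projection $f\le e$ with $\varphi(f)=0$, whence $0\le \psi(f)\le\varphi(f)=0$, so $\psi(f)=0$. But $f$ is a nonzero positive element of $eMe=s(\psi)Ms(\psi)$, on which $\psi$ is faithful (as recalled in the preliminaries), forcing $f=0$, a contradiction. Hence $\psi=0$ and $\varphi$ is singular.

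For the ``only if'' direction I would argue by contraposition together with an exhaustion. Given a singular $\varphi$ and a nonzero projection $e$, take (by Zorn) a maximal family $\{f_i\}$ of mutually orthogonal nonzero projections below $e$ with $\varphi(f_i)=0$, and put $p=\sup_i f_i\le e$. If some nonzero $g\le e-p$ had $\varphi(g)=0$, then $g\perp f_i$ for all $i$ and $\{f_i\}\cup\{g\}$ would be a strictly larger such family; so by maximality every nonzero projection below $e-p$ has strictly positive $\varphi$-value. Thus the entire statement reduces to the key claim: a singular positive functional admits no nonzero projection $e_0$ with $\varphi(g)>0$ for every nonzero projection $g\le e_0$ (for then $e-p=0$, $p=e$, the family is nonempty, and any $f_i$ is the sought projection). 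Equivalently, from such an $e_0$ one must manufacture a nonzero normal positive functional dominated by $\varphi$, contradicting singularity.

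The hard part is exactly this manufacture, and the main obstacle is that one cannot simply use a corner functional. The tempting candidate $x\mapsto c\,\omega(e_0 x e_0)$, built from a faithful normal state $\omega$, is normal but fails to satisfy $\psi\le\varphi$: writing $x\ge 0$ in its $2\times 2$ form relative to $e_0$ and $1-e_0$, the Cauchy--Schwarz bound $|\varphi(e_0 x(1-e_0))|\le \varphi(e_0 x e_0)^{1/2}\varphi((1-e_0)x(1-e_0))^{1/2}$ only yields $\varphi(x)\ge\bigl(\varphi(e_0 x e_0)^{1/2}-\varphi((1-e_0)x(1-e_0))^{1/2}\bigr)^2$, which can vanish while $\varphi(e_0 x e_0)$ is large. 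Hence the off-diagonal part of $\varphi$ obstructs any domination coming from a single corner, and the dominating normal functional must be produced globally. To do this I would pass to the universal enveloping von Neumann algebra $\widetilde{M}=M^{**}$, on which $\varphi$ extends to a normal positive functional $\widetilde{\varphi}$, and use the canonical central projection $z\in\widetilde{M}$ with $\widetilde{M}z\cong M$ for which the normal functionals on $M$ are exactly those supported under $z$ and the singular ones are exactly those with $\widetilde{\varphi}(z\,\cdot)=0$; singularity of $\varphi$ then reads $s(\widetilde{\varphi})\le 1-z$. In $\widetilde{M}$, where $\widetilde{\varphi}$ is normal, one finds without difficulty a nonzero projection below $\iota(e_0)$ annihilated by $\widetilde{\varphi}$; the decisive and delicate step is to descend such a projection, through the normal surjection $\widetilde{M}\to M$, to a genuine projection $f\le e_0$ in $M$ with $\varphi(f)=0$. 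Reconciling the projection lattices of $M$ and $\widetilde{M}$ across the normal--singular splitting is where the structure theory of the enveloping algebra (equivalently, the normal--singular decomposition of positive functionals) is indispensable, and I expect it to be the genuine crux of the whole argument.
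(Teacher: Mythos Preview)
The paper does not prove this result; it is quoted from Takesaki with a reference and used as a black box. So there is no ``paper's proof'' to compare against, and I can only assess your argument on its own merits.

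Your ``if'' direction is correct and complete. Your Zorn/exhaustion reduction of the ``only if'' direction to the key claim (no projection $e_0$ can have $\varphi(g)>0$ for all $0\neq g\le e_0$ when $\varphi$ is singular) is also correct. The gap is in the proposed resolution of that key claim. The $M^{**}$ route, as you describe it, does not close: with $z$ the central projection carrying the normal part, the obvious nonzero projection below $\iota(e_0)$ annihilated by $\widetilde{\varphi}$ is $\iota(e_0)z$, but under the identification $M^{**}z\cong M$ this projection \emph{is} $e_0$ again, so nothing has been gained in $M$. More generally, projections in $M^{**}$ below $\iota(e_0)$ need not come from projections in $M$, and there is no mechanism here to force them to; you yourself flag this descent as the crux and leave it open, so the argument is genuinely incomplete. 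There is also some drift of goal in the last paragraph: you set out to produce a nonzero normal $\psi\le\varphi$, but then switch to hunting for a projection in $M$ killed by $\varphi$---either contradiction would do, but the narrative should commit to one.

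The standard way through (Takesaki's) avoids $M^{**}$ entirely and instead compares $\varphi$ with a fixed normal state $\omega$ via a Hahn--Jordan decomposition of the hermitian functional $\varphi-\varepsilon\omega$ on $e_0Me_0$: this produces a projection $q\le e_0$ on which $\varepsilon\,\omega(\,\cdot\,)\le \varphi(\,\cdot\,)$ at the level of the corner, and one then shows $q\neq 0$ using the hypothesis on $e_0$; a short argument with the corner functional then yields a nonzero normal minorant of $\varphi$, contradicting singularity. This sidesteps exactly the off-diagonal obstruction you correctly identified, because the comparison is made projection-wise rather than via $x\mapsto c\,\omega(e_0xe_0)$.
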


\begin{thm}\cite[Theorem 3]{Takesaki1958}\label{takesaki decomposition}
	Let $\varphi$ be a positive linear functional on $M$. Then there exist unique normal linear functional $\varphi_{n}$ and a singular linear functional $\varphi_{s}$ on $M$ such that 
	\begin{align*}
	\varphi= \varphi_n + \varphi_s .
	\end{align*}
\end{thm}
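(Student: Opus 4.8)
The plan is to produce $\varphi_n$ as the \emph{largest} normal positive functional dominated by $\varphi$, and then to show that the leftover piece is forced to be singular; uniqueness will then follow from the same maximality principle together with the defining property of singularity. The one structural input I would need is that the self-adjoint predual $M_{*s}$, ordered by the cone $M_{*+}$, is a Dedekind-complete Banach lattice with order-continuous norm (an abstract $L$-space). Concretely, this is the content of the Jordan decomposition $\psi = \psi_+ - \psi_-$ of a self-adjoint normal functional into orthogonal positive normal pieces with $\norm{\psi}_1 = \norm{\psi_+}_1 + \norm{\psi_-}_1$: it supplies lattice operations $\vee,\wedge$ on $M_{*s}$ and guarantees that norm-bounded increasing nets converge in norm to their suprema, which stay in $M_{*+}$.

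For existence, I would consider $\mathcal N = \{\psi \in M_{*+} : \psi \le \varphi\}$. Using the lattice operation one checks that $\mathcal N$ is upward directed, since for $\psi_1,\psi_2 \in \mathcal N$ the sup $\psi_1 \vee \psi_2$ is again normal positive and lies below $\varphi$ (a common upper bound of $\psi_1,\psi_2$), and $\mathcal N$ is norm-bounded by $\varphi(1)$. By Dedekind completeness and order-continuity of the norm, $\varphi_n := \sup \mathcal N$ exists in $M_{*+}$, satisfies $\varphi_n \le \varphi$, and is the maximal normal positive functional dominated by $\varphi$. Setting $\varphi_s := \varphi - \varphi_n \ge 0$, I would then argue directly from the definition (equivalently via Theorem~\ref{defining property of singular functional}) that $\varphi_s$ is singular: if $0 \neq \psi \le \varphi_s$ were normal and positive, then $\varphi_n + \psi$ would be a strictly larger normal positive functional still dominated by $\varphi$, contradicting maximality; hence $\varphi_s$ dominates no nonzero normal positive functional, i.e.\ it is singular.

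For uniqueness, suppose $\varphi = \varphi_n + \varphi_s = \varphi_n' + \varphi_s'$ with the primed pieces of the same respective types. The key observation is that $\varphi_n'$ is automatically maximal as well: for any normal positive $\psi \le \varphi$ one has $\psi - \varphi_n' \le \varphi_s'$, so the (normal, positive) Jordan part $(\psi - \varphi_n')_+ = (\psi - \varphi_n') \vee 0$ is dominated by $\varphi_s'$, because $\varphi_s'$ is a common upper bound of $\psi - \varphi_n'$ and $0$ and therefore lies above their least upper bound. Singularity of $\varphi_s'$ then forces $(\psi - \varphi_n')_+ = 0$, i.e.\ $\psi \le \varphi_n'$. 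Applying this with $\psi = \varphi_n$ gives $\varphi_n \le \varphi_n'$, and by symmetry $\varphi_n = \varphi_n'$, whence $\varphi_s = \varphi_s'$.

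The hard part, and the only genuinely nontrivial ingredient, is the monotone-completeness/lattice structure of $M_{*s}$ used to form $\sup \mathcal N$ \emph{inside} $M_{*+}$ — that is, the fact that the supremum of a norm-bounded increasing net of normal functionals is again normal. I would isolate this as a preliminary lemma, deduced from the Jordan decomposition and the $L$-space property of von Neumann algebra preduals; once it is granted, both existence and uniqueness reduce to the short order-theoretic arguments above, driven entirely by the definition of singularity. An alternative route, which I would mention but not pursue, passes to the universal enveloping von Neumann algebra $M^{**}$ and extracts a central projection $z$ with $\varphi_n(x) = \tilde\varphi(zx)$ and $\varphi_s(x) = \tilde\varphi((1-z)x)$ for the canonical extension $\tilde\varphi$; this renders uniqueness transparent from the centrality of $z$, at the cost of first identifying $M^{**}$ as a von Neumann algebra.
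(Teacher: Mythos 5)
First, a point of comparison: the paper does not prove this statement at all --- it is imported verbatim from \cite[Theorem 3]{Takesaki1958} --- so there is no in-paper argument to measure you against. The proof in the cited source (and in the standard textbook treatments) runs through the universal enveloping von Neumann algebra $M^{**}$ and the central projection supporting $M_*$, i.e.\ exactly the alternative you mention at the end but decline to pursue. Your order-theoretic route via the largest normal minorant is also classical and can be completed, but as written it contains a genuine gap, and it occurs twice at the same spot: you compare a supremum computed inside the lattice $M_{*s}$ with an upper bound that does \emph{not} belong to $M_{*s}$. In the directedness step you assert $\psi_1 \vee \psi_2 \leq \varphi$ ``because $\varphi$ is a common upper bound of $\psi_1, \psi_2$'', and in the uniqueness step you assert $(\psi - \varphi_n')_+ \leq \varphi_s'$ for the same reason. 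But $\varphi$ and $\varphi_s'$ are in general not normal, and the least upper bound of two elements of a lattice \emph{subspace} need not lie below an upper bound taken from the ambient ordered space. This is a real phenomenon, not a formality: in $C[0,1]$ the affine functions form a lattice in the induced order, and for $w_1(t)=t$, $w_2(t)=1-t$ their supremum \emph{within the affine functions} is the constant function $1$, which is not dominated by the ambient upper bound $\max(t,1-t)$. So the ``one structural input'' you isolate --- that $M_{*s}$ is a Dedekind-complete $L$-space --- is insufficient precisely at these two steps.

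The repair is to upgrade your preliminary lemma: the full self-adjoint dual $M^*_s$ is itself a Dedekind-complete Banach lattice (it is the self-adjoint part of the predual of the von Neumann algebra $M^{**}$), and $M_{*s}$ is an order ideal in it, because a positive functional dominated by a normal positive functional is normal (Cauchy--Schwarz, $\abs{\mu(x)}^2 \leq \mu(1)\,\mu(x^*x) \leq \mu(1)\,\nu(x^*x)$, gives strong continuity on bounded sets). Ideals inherit finite suprema from the ambient lattice, so $\psi_1 \vee \psi_2$ formed in $M_{*s}$ coincides with the supremum in $M^*_s$ and is dominated by every upper bound there; both of your comparisons then become legitimate, and in fact the whole theorem is exposed as the band decomposition $M^*_s = M_{*s} \oplus M_{*s}^{\perp}$, with the disjoint complement being exactly the singular functionals. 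Separately, your uniqueness step admits an elementary repair that bypasses the ambient lattice altogether: for the normal self-adjoint $\mu := \psi - \varphi_n'$, take the Jordan decomposition $\mu = \mu_+ - \mu_-$ with orthogonal supports and set $e = s(\mu_+)$, so that $\mu_+(x) = \mu(exe)$; for every projection $f \leq e$ one then has $\mu_+(f) = \mu(f) \leq \varphi_s'(f)$, and if $\mu_+ \neq 0$, Theorem \ref{defining property of singular functional} produces a non-zero $f \leq s(\mu_+)$ with $\varphi_s'(f) = 0$, contradicting the faithfulness of $\mu_+$ on $s(\mu_+) M s(\mu_+)$. No such elementary fix is available for the directedness claim in your existence step (the inequality $\rho(exe) \leq \rho(x)$ fails for general positive $\rho$ and projections $e$, already for $2 \times 2$ matrices), so there the strengthened lattice lemma --- or a wholly different construction of $\varphi_n$, such as the $M^{**}$ route --- is genuinely needed.
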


\subsection{Non-commutative $L^p$-spaces} 
Let $M \subseteq \CB(\CH)$ be a   von Neumann algebra. Recall that 
a closed densely defined operator $X: \mathcal{D}(X)\subseteq \CH \to \CH$  is called  affiliated to $M$ if $X$ commutes with every unitary operator $u'$ in $M'$, where $M'$ is  the commutant of $ M$ in $\CB(\CH)$. From now on if $X$ is affiliated to $M$, we will denote it by $X \eta M$.

Now we put up  a brief account on non-commutative $L^p$-spaces for a semifinite von Neumann algebra $N$ with a  faithful,  normal,  semifinite trace $\tau$. For more details, we refer to \cite{Hiai2020}. We abbreviate faithful, normal, and semifinite as f.n.s for future reference.  Let $L^2(N, \tau )$ be the GNS Hilbert space. Write $\CH = L^2(N, \tau )$ and we identify $N$ as a von Neumann subalgebra of $\CB(\CH)$. 
\begin{defn}
	An operator $X$ (possibly unbounded), defined on $\CH$, is said to be $\tau$-measurable if for every $\epsilon >0$, there is a projection $e$ in $N$ such that $e \CH \subseteq \mathcal{D}(X)$ and $\tau(1-e)< \epsilon$. The set of all closed, densely defined operators affiliated to $N$ which are $\tau$-measurable is denoted by $L^0(N, \tau)$.
\end{defn}

\begin{rem}\label{mble ops are star algebra}
	Let $X, Y \in L^0(N, \tau)$, and then it is easy to see that $X+Y$ and $XY$ are densely defined and closable. Moreover, $L^0(N, \tau)$ is a $*$-algebra with respect to adjoint operation $*$, the strong sum $\overline{X+Y}$, and the strong product $\overline{XY}$, where $\overline{(\cdot)}$ denotes the closure of an operator. 
\end{rem}

\noindent For all $\epsilon, \delta >0$, let us consider the following set,
\begin{align*}\label{nbd basis for measure topology} 
\mathcal{N}(\epsilon, \delta):= 
\lbrace X \in L^0(N, \tau): \exists \text{ projection } e \in N \text{ such that } \  \norm{Xe} \leq \epsilon \text{ and } \tau(1-e) \leq \delta \rbrace.
\end{align*}
Note that the collection of sets $\{ X +\mathcal{N}(\epsilon, \delta) : X \in L^0(N, \tau), \epsilon>0, \delta>0 \}$ forms a neighborhood basis on $L^0(N, \tau)$.

\begin{defn}
	The topology  generated by the neighborhood basis $\{ X +\mathcal{N}(\epsilon, \delta) : X \in L^0(N, \tau), \epsilon>0, \delta>0 \}$ is  called the measure topology on $L^0(N, \tau)$. 
\end{defn}
\begin{defn}	
	A sequence $\{ X_n \}_{n \in \N}$ in $L^0(N, \tau)$ is said to converge in measure (or converge stochastically ) to $X \in L^0(N, \tau)$ if for all $\epsilon, \delta>0$ there exists a sequence of projections $\{e_n\}_{n \in \N}$ in $N$ and $n_0 \in \N$ such that for all $n \geq n_0$
	\begin{align*}
	\tau(1-e_n) < \delta  \text{ and } \norm{(X_n-X) e_n} < \epsilon.
	\end{align*}
\end{defn}

\begin{rem}\label{measure topo mble ops}
	We note that with respect to measure topology, $L^0(N, \tau)$ becomes a complete, metrizable, Hausdorff space. Moreover, $N$ is dense in $L^0(N, \tau)$ in this topology [cf. \cite{Hiai2020}, Theorem 4.12]. 
\end{rem}

The following theorem from \cite{Chilin2005} provides an equivalent condition for convergence of a sequence in measure. Henceforth, we use the criteria in the following theorem (Theorem \ref{equiv of bm and N}) as the definition of convergence in measure.

\begin{thm}[Theorem 2.2, \cite{Chilin2005}]\label{equiv of bm and N}
	A sequence of operators $\lbrace X_n \rbrace_{n \in \N} \subseteq L^0(N, \tau)$ converges in measure to $X \in L^0(N, \tau)$ iff for all $\epsilon > 0$ and $\delta > 0$ there exists $n_0 \in \N$ and a sequence of projections $\lbrace e_n \rbrace_{n \in \N}$ in $N$ such that for all $n \geq n_0$,
	\begin{align*}
	\tau(1-e_n) < \delta \text{ and } \norm{e_n (X_n-X) e_n} < \epsilon.
	\end{align*}
\end{thm}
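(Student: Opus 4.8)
The plan is to prove the equivalence of the two-sided compression criterion ($\norm{e_n(X_n-X)e_n} < \epsilon$) with the one-sided criterion ($\norm{(X_n-X)e_n} < \epsilon$) that already defines convergence in measure in the preceding definition. Set $Y_n := X_n - X \in L^0(N, \tau)$; then the statement reduces to showing that $Y_n \to 0$ in measure (in the one-sided sense of the definition) if and only if for all $\epsilon, \delta > 0$ there exist $n_0$ and projections $e_n$ with $\tau(1-e_n) < \delta$ and $\norm{e_n Y_n e_n} < \epsilon$ for $n \geq n_0$. The forward direction is immediate: if $\norm{Y_n e_n} < \epsilon$ then $\norm{e_n Y_n e_n} \leq \norm{e_n}\norm{Y_n e_n} \leq \epsilon$, so the very same projections work. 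Hence the whole content lies in the reverse implication, which is where I expect the main obstacle.

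For the reverse direction, I would start from projections $e_n$ satisfying $\tau(1-e_n) < \delta/2$ and $\norm{e_n Y_n e_n} < \epsilon'$ for $n \geq n_0$ (with $\epsilon', \delta'$ to be tuned), and manufacture new projections $f_n \leq e_n$ controlling the one-sided norm $\norm{Y_n f_n}$ at the cost of a slightly larger trace defect. The natural tool is spectral theory applied to the compressed positive operator $Y_n^* Y_n$ or to $|Y_n e_n|$. Concretely, I would consider the bounded positive operator $T_n := e_n Y_n^* Y_n e_n$, which satisfies $\norm{e_n Y_n^* Y_n e_n}$-type bounds, and use its spectral projection. Writing $p_n$ for the spectral projection of $e_n Y_n^* Y_n e_n$ onto $[0, \epsilon^2]$ (inside the algebra $e_n N e_n$), one gets $\norm{Y_n e_n p_n} \leq \epsilon$ directly, since $\norm{Y_n e_n p_n}^2 = \norm{p_n e_n Y_n^* Y_n e_n p_n} \leq \epsilon^2$. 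The remaining task is to bound $\tau(e_n - p_n)$, i.e. to show that the spectral projection onto the large eigenvalues is small in trace.

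The bound $\tau(e_n - p_n) \leq \epsilon^{-2}\,\tau(e_n Y_n^* Y_n e_n)$ would follow from a Chebyshev-type estimate, but the hypothesis only controls the norm $\norm{e_n Y_n^* Y_n e_n}$, not its trace, so I cannot directly invoke Chebyshev; the operator $e_n Y_n^* Y_n e_n$ need not be trace-class. The key idea to circumvent this is to exploit that the hypothesis already gives a norm bound on the two-sided compression: from $\norm{e_n Y_n e_n} < \epsilon'$ one has $\norm{e_n Y_n^* Y_n e_n} \leq \norm{e_n Y_n^*}\,\norm{Y_n e_n}$, which is not yet controlled, so instead I would symmetrize more carefully. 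The cleaner route is to apply the already-established equivalence machinery of \cite{Chilin2005}, comparing the quantities $d(Y_n, \lambda)$ built from the generalized singular numbers: the norm smallness of $e_n Y_n e_n$ on a co-small projection controls the distribution function $\mu_t(Y_n)$ for $t$ slightly larger than $\delta$, via the min-max characterization of singular numbers. Translating this back yields projections witnessing one-sided convergence with $\tau(1-f_n) < \delta$, completing the reverse implication. The main obstacle, then, is precisely this passage from two-sided norm control to one-sided norm control without trace-class hypotheses — handled by routing through the singular-number distribution function rather than a naive Chebyshev inequality, and absorbing the loss into the parameters $\epsilon, \delta$.
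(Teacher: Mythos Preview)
First, note that the paper itself does not prove this statement; it is quoted from \cite{Chilin2005} and thereafter adopted as the working definition of convergence in measure, so there is no argument in the paper to compare yours against.

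On the merits: your forward direction is correct, but the reverse direction has a genuine gap. You assert that $T_n := e_n Y_n^* Y_n e_n$ is a \emph{bounded} positive operator, yet the hypothesis only provides $\norm{e_n Y_n e_n} < \epsilon$, and this does not force $Y_n e_n$ (hence $e_n Y_n^* Y_n e_n$) to be bounded --- the unbounded part of $Y_n e_n$ can live entirely in the range of $1-e_n$. The spectral cut $p_n$ you form therefore need not make sense as written, and the subsequent Chebyshev discussion is moot. Your fallback ``route through the singular-number distribution function'' is a reasonable instinct but is only gestured at; as it stands it is a hope rather than an argument. The missing idea is a support-projection trick: let $r_n$ be the right support of the $\tau$-measurable operator $(1-e_n)Y_n e_n$. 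Then $r_n \leq e_n$, and since the left and right supports of a closed affiliated operator are Murray--von~Neumann equivalent via the partial isometry of its polar decomposition, $\tau(r_n)=\tau\bigl(l((1-e_n)Y_n e_n)\bigr)\leq \tau(1-e_n)$. Setting $f_n:=e_n-r_n$ gives $(1-e_n)Y_n f_n=0$, hence $Y_n f_n = e_n Y_n e_n f_n$ and $\norm{Y_n f_n}\leq\norm{e_n Y_n e_n}<\epsilon$, while $\tau(1-f_n)\leq 2\tau(1-e_n)$. Starting from $\delta/2$ in the hypothesis then yields the one-sided conclusion with parameter $\delta$.
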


We now define another notion of convergence of sequences in $L^0(N, \tau)$, which will be helpful in our context.

\begin{defn}
	A sequence of operators $\lbrace X_n \rbrace_{n \in \N} \subseteq L^0(N, \tau)$ converges bilaterally almost uniformly (b.a.u) to $X \in L^0(N, \tau)$ if for all $\epsilon > 0$  there exists a projection $e \in N$ with $\tau(1-e) < \epsilon$ such that for all $\delta > 0$ there exists $n_0 \in \N$ such that for all $n \geq n_0$,
	\begin{align*}
	 \norm{e(X_n-X) e} < \delta. 
	\end{align*}
\end{defn}

\begin{rem}\label{a.u. stronger than m}
	We immediately notice  that	the bilateral almost uniform convergence of a sequence implies the convergence of the sequence in measure.
\end{rem}

We put down the following proposition for our future reference. The proof is simple and hence we omit it.
\begin{prop}\label{bau convergence closed under addition and mult}
	Suppose $\{X_n \}_{n \in \N}$ and $\{ Y_n \}_{n \in \N}$ are two sequences in $L^0(N, \tau)$ such that $\{X_n \}_{n \in \N}$ converges in measure (resp. b.a.u) to $X$ and $\{ Y_n \}_{n \in \N}$ converges in measure (resp. b.a.u) to $Y$. Then, for all  $c \in \C$,  $\{ c X_n + Y_n \}_{n \in \N}$ converges in measure (resp. b.a.u) to $cX + Y$.
\end{prop}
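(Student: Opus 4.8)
The plan is to treat both assertions by the same elementary device: reduce the compression of the combined difference to the individual compressions by passing to a common subprojection. Throughout I write $D_n := (cX_n + Y_n) - (cX + Y) = c(X_n - X) + (Y_n - Y)$, so the task is to control $\norm{f D_n f}$ for a suitably chosen projection $f$.

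For the convergence-in-measure case I would work from the criterion in \Cref{equiv of bm and N}. Fix $\epsilon, \delta > 0$. Applying the hypothesis to $\{X_n\}_{n \in \N}$ with tolerances $\tfrac{\epsilon}{2(\abs{c}+1)}$ and $\tfrac{\delta}{2}$ yields $n_1 \in \N$ and projections $e_n$ with $\tau(1-e_n) < \tfrac{\delta}{2}$ and $\norm{e_n(X_n - X)e_n} < \tfrac{\epsilon}{2(\abs{c}+1)}$ for $n \geq n_1$; applying it to $\{Y_n\}_{n \in \N}$ with tolerances $\tfrac{\epsilon}{2}$ and $\tfrac{\delta}{2}$ yields $n_2 \in \N$ and projections $g_n$ with the analogous bounds. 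I then set $f_n := e_n \wedge g_n$. The first key fact is the trace inequality $\tau(1 - e_n \wedge g_n) = \tau\big((1-e_n) \vee (1 - g_n)\big) \leq \tau(1-e_n) + \tau(1-g_n) < \delta$. The second is a compression-monotonicity observation: since $f_n \leq e_n$ gives $f_n e_n = e_n f_n = f_n$, one has $f_n(X_n - X)f_n = f_n\big[e_n(X_n-X)e_n\big]f_n$, a product of bounded operators, whence $\norm{f_n(X_n-X)f_n} \leq \norm{e_n(X_n-X)e_n}$, and likewise for $Y$. The triangle inequality then gives $\norm{f_n D_n f_n} \leq \abs{c}\,\norm{f_n(X_n-X)f_n} + \norm{f_n(Y_n-Y)f_n} < \epsilon$ for $n \geq \max(n_1, n_2)$, which is precisely the criterion for $\{cX_n + Y_n\}_{n \in \N} \to cX+Y$ in measure.

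The b.a.u case is identical in spirit but uses the reversed quantifier order in its definition: here a single projection is chosen for each sequence and must work for all subsequent $\delta$. Given $\epsilon > 0$, I would extract from the hypotheses projections $e$ (with $\tau(1-e) < \tfrac{\epsilon}{2}$ and $\norm{e(X_n-X)e} \to 0$) and $g$ (with $\tau(1-g) < \tfrac{\epsilon}{2}$ and $\norm{g(Y_n-Y)g} \to 0$), and set $f := e \wedge g$. The same trace inequality gives $\tau(1-f) < \epsilon$, and the same compression bound gives $\norm{f D_n f} \leq \abs{c}\,\norm{e(X_n-X)e} + \norm{g(Y_n-Y)g} \to 0$, which is exactly b.a.u convergence of $\{cX_n + Y_n\}_{n \in \N}$.

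There is no genuine obstacle here; the only points requiring care are the lattice/trace inequality $\tau(1 - e\wedge g) \leq \tau(1-e) + \tau(1-g)$ and the legitimacy of the operator manipulations when $X_n - X$ is unbounded. The latter is harmless, since the relevant compressions $e(X_n-X)e$ are bounded by the very hypotheses, so all products of projections with them remain within the bounded operators. I note as well that for the measure case one could bypass the $\epsilon$--$\delta$ bookkeeping entirely: by \Cref{measure topo mble ops} convergence in measure is convergence in a vector-space topology on $L^0(N,\tau)$, so continuity of addition and scalar multiplication yields the claim at once; the b.a.u assertion, lacking such a single-metric description, still requires the direct argument above.
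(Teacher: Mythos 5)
Your argument is correct, and since the paper explicitly omits the proof of Proposition \ref{bau convergence closed under addition and mult} as routine, your write-up is exactly the standard argument the authors had in mind: pass to the intersection projection $e \wedge g$ (resp.\ $e_n \wedge g_n$), use $\tau(1-e\wedge g)=\tau((1-e)\vee(1-g))\leq \tau(1-e)+\tau(1-g)$, and compress via $f \leq e$, with the unbounded-operator manipulations justified because $L^0(N,\tau)$ is a $*$-algebra under strong sum and product (Remark \ref{mble ops are star algebra}) and the compressions $e(X_n-X)e$ are bounded by hypothesis. Your closing observation that the measure case also follows from continuity of the vector-space operations in the measure topology (Remark \ref{measure topo mble ops}) is a valid shortcut, and you correctly note it does not extend to the b.a.u.\ case.
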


Let $L^0(N, \tau)_+$ denote the positive cone of $L^0(N, \tau)$. Extend the trace $\tau$ on $N_+$ to $L^0(N, \tau)_+$ and denote it again by $\tau$ by slight abuse of notation.  For any $X \in L^0(N, \tau)_+$, it is defined by 
\begin{align*}
\tau(X) := \int_{0}^{\infty} \lambda d \tau(e_{\lambda}),
\end{align*}
where $X= \int_{0}^{\infty} \lambda d(e_{\lambda})$ is the spectral decomposition.

Although, for  this article we need only the description of $L^p(N, \tau)$ for $p = 1$ or $ \infty$, but we define it for all $ 0 < p \leq \infty$ since the definitions are similar for all $p$. 
\begin{defn}
	For $0< p \leq \infty$, the non-commutative $L^p$-space on $(N, \tau)$ is defined by 
	\[ L^p(N,\tau):= 
	\begin{cases}
	\lbrace X \in L^0(N, \tau) : \norm{X}_p:= \tau(\abs{X}^p)^{1/p} < \infty \rbrace  & \text{ for } p \neq \infty, \\
	(N, \norm{\cdot}) & \text{ for } p= \infty
	\end{cases}
	\]
	where, $\abs{X}= (X^*X)^{1/2}$. 
\end{defn}

The positive cone of $L^p(N, \tau)$ will be denoted by $L^p(N, \tau)_+$. Now we list a few essential properties of $L^p(N,\tau)$ without proofs. We will use these properties recursively in the sequel. For the proofs, we refer to \cite{Hiai2020}.

\begin{thm}\label{basic thm in Lp}
	\begin{enumerate}
		\item[(i)] For all $1 \leq p \leq \infty$, $L^p(N,\tau)$ is a Banach space with respect to the norm $\norm{\cdot}_p$. Moreover, for all $1<p<\infty$, $L^p(N,\tau)$ is reflexive.
		\item[(ii)] Let $1 < p < \infty$ and $1/p + 1/q =1$. Then the map $\Psi: L^q(N, \tau) \to L^p(N, \tau) $ defined by $\Psi(b)(a)= \tau(ab)$ for $b \in L^q(N, \tau), a \in L^p(N, \tau)$ is a surjective linear isometry.
		\item[(iii)] The map $\Psi: L^1(N, \tau) \to N_* $ defined by $\Psi(X)(a)= \tau(Xa)$ for $X \in L^1(N, \tau), a \in N$ is a surjective linear isometry which preserves the positive cones.
	\end{enumerate}
\end{thm}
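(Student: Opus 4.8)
The plan is to base everything on the theory of generalized singular values (the non-commutative decreasing rearrangement) $\mu_t(X)$ of a $\tau$-measurable operator $X$, together with the key identity $\norm{X}_p^p = \tau(\abs{X}^p) = \int_0^\infty \mu_t(X)^p\, dt$, which converts the non-commutative $L^p$ norm into the classical $L^p$ norm on $(0,\infty)$ of the scalar function $t \mapsto \mu_t(X)$. This reduction is what lets us import the classical scalar inequalities into the operator setting.

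First I would treat (i). Positive homogeneity of $\norm{\cdot}_p$ is immediate, so the substance is the triangle (Minkowski) inequality. The cleanest route is to first prove the non-commutative H\"older inequality $\norm{XY}_1 \le \norm{X}_p\norm{Y}_q$ for conjugate exponents, via the submajorization $\int_0^t \mu_s(XY)\,ds \le \int_0^t \mu_s(X)\mu_s(Y)\,ds$ combined with the classical H\"older inequality, and then to derive both Minkowski and the dual description $\norm{X}_p = \sup\{\abs{\tau(XY)} : Y \in L^q, \norm{Y}_q \le 1\}$; the supremum formula makes the triangle inequality transparent. For completeness I would use that $\norm{\cdot}_p$-convergence implies convergence in measure, invoke completeness of $L^0(N,\tau)$ in the measure topology (Remark \ref{measure topo mble ops}), and close the argument with a Fatou-type lower semicontinuity of $X \mapsto \tau(\abs{X}^p)$ under convergence in measure, so that a $\norm{\cdot}_p$-Cauchy sequence converges in $\norm{\cdot}_p$ to its measure-limit.

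For (ii) and (iii) the map $\Psi$ is well defined and contractive directly from H\"older (with $(p,q)$ conjugate in (ii), and $(1,\infty)$ in (iii), using $\abs{\tau(Xa)} \le \norm{X}_1\norm{a}$). The isometry follows from sharpness of H\"older: given $b$ with polar decomposition $b = u\abs{b}$, a suitably normalised test element built from $u^*$ and a power of $\abs{b}$ attains $\tau(ab)$ arbitrarily close to $\norm{b}_q$ (resp. $\norm{X}_1$ in (iii)). Preservation of the positive cones in (iii) is the observation that $\Psi(X) \ge 0$ as a functional iff $\tau(Xa) \ge 0$ for all $a \in N_+$, which holds iff $X \ge 0$. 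Reflexivity in (i) for $1<p<\infty$ is then a formal consequence of (ii): since $(L^p)^* \cong L^q$ and $(L^q)^* \cong L^p$ with the canonical bidual pairing matching, $L^p$ is reflexive.

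The main obstacle is surjectivity of $\Psi$ in both (ii) and (iii), that is, showing every bounded functional on $L^p$ (resp. every normal functional in $N_*$) is represented by trace-pairing with an element of $L^q$ (resp. $L^1$). This is the hard analytic core: it rests on a non-commutative Radon--Nikodym theorem for the trace $\tau$, producing from a given functional a (possibly unbounded) density operator affiliated with $N$, together with an approximation argument over projections of finite trace to control its $L^q$ (resp. $L^1$) norm and to identify it as a genuine element of the space. By contrast, the isometry step and the Minkowski/completeness steps, while technical, are essentially routine once the singular-value machinery and the supremum formula for $\norm{\cdot}_p$ are in place.
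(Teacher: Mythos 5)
Your proposal is correct, but the paper itself contains no proof of this theorem: it is stated explicitly ``without proofs'' with the reader referred to \cite{Hiai2020}, and that reference develops $L^p(N,\tau)$ precisely along your route --- Fack--Kosaki generalized singular values with $\norm{X}_p^p=\int_0^\infty \mu_t(X)^p\,dt$, submajorization giving H\"older, the trace-duality supremum formula giving Minkowski and the isometry, completeness via the measure topology plus Fatou, and a trace Radon--Nikodym argument for surjectivity --- so your sketch matches the intended source essentially step for step (including your correct caveat that reflexivity needs the duality isomorphisms to intertwine the canonical bidual embedding, not merely an abstract isometry $(L^p)^{**}\cong L^p$). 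The only discrepancy worth recording is in the statement itself: in item (ii) the codomain $L^p(N,\tau)$ is a typo for the dual space $(L^p(N,\tau))^*$, which is how you (correctly) read it.
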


The following discussion is also going to play an important role in the sequel. Let $N$ be a von Neumann algebra acting on a Hilbert space $\CH$ and equipped with a faithful,  normal,  semifinite trace $\tau$ and $e$ be a projection in $N$. Further, assume that $N_e:= \{ x_e:= ex_{\upharpoonleft e\CH} : x \in N \}$ denote the reduced von Neumann algebra. Define the reduced trace on $N_e$ as
\begin{align*} 
\tau_e(x_e)= \tau(exe), \text{ for all } x \in N.
\end{align*}

\begin{rem}
	\begin{enumerate}
		\item[(i)] Since $\tau$ is a faithful, normal, semifinite trace, $\tau_e$ also has similar properties.
		\item[(ii)] Let $1\leq  p \leq \infty$ and $X \in L^p(N_e, \tau_e)$. Define $\tilde{X}$ on $\CH$ by
		\begin{align*}
		\tilde{X}\xi = Xe \xi, \text{ for all } \xi \in \mathcal{D}(\tilde{X}):= \mathcal{D}(X) \oplus (1-e)\CH.
		\end{align*}
		Then the mapping $L^p(N_e, \tau_e) \ni X \mapsto \tilde{X} \in eL^p(N, \tau)e$ defines an isomorphism as Banach space for $1 \leq p \leq \infty$.  From now onwards we identify $L^p(N_e, \tau_e)$ with $eL^p(N ,\tau)e$.
		\item[(iii)] It follows from  \cite[Theorem 4.12 and Lemma 5.3]{Hiai2020} that $e L^p(N, \tau) e \subseteq L^p(N, \tau)$.
	\end{enumerate}
\end{rem}
\subsection{Actions by amenable semigroups}

	Throughout this article, unless otherwise mentioned, $G$ will denote a  locally compact second countable Hausdorff (shortly read it as LCSH)  semigroup and $m$ be a $\sigma$-finite measure on $G$, which is both left and right invariant (i.e. $m(uB)= m(B)$ and $m(Bu)= m(B)$ for all $u \in G$).  In the sequel $\K$ will always denote either $\Z_+$ or $\R_+$. We consider a collection $\{K_l\}_{l \in \K}$ of measurable subsets of $G$ having the following properties. 
	\begin{enumerate}
		\item[(P1)] $0< m(K_l) < \infty$ for all $l \in \K$.
		\item[(P2)] $\lim_{l \to \infty} \frac{m(K_l \Delta K_l g)}{m(K_l)}=0$ and $\lim_{l \to \infty} \frac{m(K_l \Delta gK_l)}{m(K_l)}=0$ for all $g \in G$.
	\end{enumerate}
Note that  such a  net  is called F\o lner net and in that  case $G$ is referred as amenable semigroup. Thus, when we say $G$ is a amenable semigroup, we mean G is  is LCSH semigroup having F\o lner net with respect to fix invariant measure. In this article, $G$ will be meant for LCSH amenable semigroup unless otherwise mentioned.
\begin{rem}
	We like to pointout that when $G$ is a group then we don't need to assume  the Haar measure to be left-right invariant. Only when it is proper semigroup (i.e, it is not closed under inverse), we require  for certain results in the sequel that the Haar measure on $G$ is  left-right invariant.  
\end{rem}

Now we quickly recall ordered Banach space. A  real Banach space $E$ paired with a closed convex subset $K$ satisfying $K \cap -K = \{0\}$ and $\lambda K \subseteq K$ for $\lambda \geq 0$ induces a partial order on $E$ given by $x \leq y$ if and only if $y-x \in K$ where $x,y \in E$. With such a partial order, $E$ will be referred to as an ordered Banach space. In our context, it is easy to verify that $M, M^*, M_*$, $L^p(M, \tau)$ for $1 \leq p \leq \infty$ become ordered  Banach spaces with respect to the natural order. Let us now define an action of $G$ on ordered Banach spaces.
	
	\begin{defn}\label{action on Banach sp}
		Let $E$ be an Banach space. A map $\Lambda$ defined by 
		\begin{align*}
		G \ni g \xrightarrow[]{\Lambda} \Lambda_g \in \CB(E)
		\end{align*}
		is called an action if $\Lambda_g \circ \Lambda_h= \Lambda_{gh}$ for all $g,h \in G$. It is called anti-action if $\Lambda_g \circ \Lambda_h= \Lambda_{hg}$ for all $g,h \in G$. In this article, we consider both actions and anti-actions $\Lambda= \{\Lambda_g\}_{g \in G}$ which satisfy the following conditions.
		\begin{enumerate}
			\item[(C)] For all $x \in E$, the map $g \rightarrow \Lambda_g(x)$ from $G$ to $E$ is continuous. Here we take $w^*$-topology when $E=M$ and norm topology otherwise.
			\item[(UB)] $\sup_{g \in G} \norm{\Lambda_g} \leq 1$.
			\item[(P)] Suppose $x \in E$ with $x \geq 0$, then  $\Lambda_g(x) \geq 0$ for all  $g \in G$.  
				\item[(U)] When $E = M$, we assume that $ \Lambda_g(1) \leq  1$ for all  $g \in G$. 
		\end{enumerate}
		We refer the triple $(E, G, \Lambda)$  as a non-commutative dynamical system.  
	\end{defn}
	Let $(M, \Lambda, G)$ be a non-commutative dynamical system and $\varphi \in M^*$, it is called $G$-invariant if $\varphi(\Lambda_g(x)) = \varphi(x)$ for all $x \in M$ and $g \in G$.\\\\
	Now let $(E, G, \Lambda )$ be a  non-commutative dynamical system and $\{K_n\}_{n \in \N}$ be a \textit{F\o lner sequence} in $G$. For all $x \in E$, we consider the following average 
	\begin{align}\label{Average}
	A_n(x):= \frac{1}{m(K_n)} \int_{K_n} \Lambda_g(x) dm(g).
	\end{align}
	We note that for $x \in E$, the map  $G \ni g \rightarrow \Lambda_g(x) \in E $ is continuous in norm of $E$.
	Further, when $E=M$, then it is  $w^*$-continuous. Therefore, in both cases the integration in eq. \ref{Average} is well defined. In addition 
	by \cite[Proposition 1.2, pp-238]{Takesaki2003},  for all $n \in \N$, $\varphi \in M_*$ and $x \in M$ the following holds
	\begin{align*}
	\varphi(A_n(x)) = \frac{1}{m(K_n)} \int_{K_n} \varphi(\Lambda_g(x)) dm(g). 
	\end{align*}\\
	Now for all $g \in G$, consider
	\begin{align}\label{action on M*}
	\Lambda_g^* : M^* \to M^* \text{ by } \Lambda_g^*(\varphi)(x)= \varphi(\Lambda_g(x)) \text{ for all } \varphi \in M^*, x \in M.
	\end{align}
	For all $n \in \N$, we also consider the following average defined by
	\begin{align}\label{averaging operator in M*}
	A_n^*: M_* \to M_*; \		\varphi \mapsto A_n^*(\varphi)(\cdot):= \varphi(A_n(\cdot))= \frac{1}{m(K_n)} \int_{K_n} \Lambda_g^*(\varphi)(\cdot) dm(g).
	\end{align}
	We note that for all $n \in \N$,  $\norm{A_n} \leq \sup_{g \in G} \norm{\Lambda_g}$ and  $\norm{A_n^*} \leq \sup_{g \in G} \norm{\Lambda_g}$. These will be called averaging operators for future references. We make no difference between these two averaging operators $A_n$ and $A_n^*$ for the convenience of notation, unless and otherwise, it is not clear from the context.

The following discussion regarding dual and predual map  will be used in future. 	
Let $M$ be a semifinite von Neumann algebra with a f.n semifinite trace $\tau$, then we identify $M_* $ with $ L^1(M, \tau)$ . Suppose  $T : M \rar M $ be a  contractive positive normal map and $T^*$ and $T_*$ be the dual and predual operator  of $T$ respectively  on $ M^*$ and $M_*$. Then  note that $ T^*|_{ M_*} = T_* $. \\\\
We write the following version of mean ergodic theorem, the proof may be  folklore in the literature. But for the completeness we add a proof. This version of mean ergodic theorem will be used in the sequel. Indeed, it will be used to prove the existence of weakly wandering operators.   
\begin{thm}[Mean Ergodic Theorem]\label{mean erg thm}
		Let $(E, \norm{\cdot})$ be a Banach space and $( E, G, \alpha)$ be a non-commutative dynamical system.   Consider the associated ergodic averages
		\begin{align*}
			A_n (\xi) := \frac{1}{m(K_n)} \int_{K_n} \alpha_g (\xi) dm(g), ~ \xi \in E.
		\end{align*}
		Suppose $ E^G= \{ x \in E: \alpha_g(x) =x  \text{ for all } g \in G \}$. Then the following are equivalent.
		\begin{enumerate}
			\item For all $\xi \in E$ there exists $\overline{\xi} \in E^G$ such that $\lim_{n \to \infty} A_n \xi= \overline{\xi}$.
			\item For all $\xi \in E$ there exists $\overline{\xi} \in E^G$ and a subsequence $(n_k)$ such that weak-$\lim_{k \to \infty} A_{n_k} \xi= \overline{\xi}$.
			\item For all $\xi \in E$ there exists $\overline{\xi} \in E^G \cap \overline{co}^{weak} \{\alpha_g \xi : g \in G \}$.
			\item For all $\xi \in E$ there exists $\overline{\xi} \in E^G \cap \overline{co}^{\norm{\cdot}} \{\alpha_g \xi : g \in G \}$.
		\end{enumerate}
	\end{thm}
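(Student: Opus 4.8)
The plan is to establish the cycle of implications $(1) \Rightarrow (2) \Rightarrow (3) \Rightarrow (4) \Rightarrow (1)$, built on three elementary observations about the averaging operators $A_n$. First, if $\xi \in E^G$ then $\alpha_g \xi = \xi$ for every $g$, so $A_n \xi = \xi$ for all $n$; thus $A_n$ fixes $E^G$ pointwise. Second, since by hypothesis (C) the map $g \mapsto \alpha_g \xi$ is norm-continuous and $0 < m(K_n) < \infty$ by (P1), the integral defining $A_n \xi$ is a genuine Bochner integral, i.e.\ the barycenter of the normalized measure $m(K_n)^{-1} m|_{K_n}$; hence $A_n \xi \in \overline{co}^{\norm{\cdot}}\{\alpha_g \xi : g \in G\}$. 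Third --- this is where the F\o lner property (P2) enters --- for any $\eta \in E$ and $h \in G$, invariance of $m$ together with the pushforward of $m|_{K_n}$ under right translation gives $\norm{A_n \alpha_h \eta - A_n \eta} \leq \frac{m(K_n h \,\Delta\, K_n)}{m(K_n)}\norm{\eta} \to 0$ (and symmetrically $\norm{\alpha_h A_n \eta - A_n \eta} \leq \frac{m(h K_n \,\Delta\, K_n)}{m(K_n)}\norm{\eta} \to 0$); for anti-actions one simply exchanges the roles of the left and right F\o lner conditions. In particular $A_n(\alpha_g \xi - \xi) \to 0$ in norm for every coboundary, and since $\sup_n \norm{A_n} \leq 1$ by (UB), an $\varepsilon/3$-argument propagates this to the closed subspace $N_\xi := \overline{\Span}^{\,\norm{\cdot}}\{\alpha_g \xi - \xi : g \in G\}$, on which $A_n \to 0$ uniformly on bounded sets.

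With these in hand the first three implications are short. $(1) \Rightarrow (2)$ is immediate, as norm convergence implies weak convergence along the whole sequence, and one passes to any subsequence. For $(2) \Rightarrow (3)$, the barycenter fact places each $A_{n_k}\xi$ in $\overline{co}^{\norm{\cdot}}\{\alpha_g \xi\}$; by Mazur's theorem this convex set is weakly closed, so the weak limit $\overline{\xi}$ stays in it, and since $\overline{\xi} \in E^G$ by hypothesis we obtain $(3)$. The implication $(3) \Rightarrow (4)$ is again Mazur's theorem: the weak and norm closures of $co\{\alpha_g \xi\}$ coincide, so the $\overline{\xi}$ furnished by $(3)$ already lies in $E^G \cap \overline{co}^{\norm{\cdot}}\{\alpha_g \xi\}$.

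The substantive step is $(4) \Rightarrow (1)$. Given $\overline{\xi} \in E^G \cap \overline{co}^{\norm{\cdot}}\{\alpha_g \xi\}$, I approximate $\overline{\xi}$ in norm by a finite convex combination $\sum_i \lambda_i \alpha_{g_i}\xi$; since $\sum_i \lambda_i = 1$, the difference $\sum_i \lambda_i \alpha_{g_i}\xi - \xi = \sum_i \lambda_i(\alpha_{g_i}\xi - \xi)$ is a coboundary, whence $\overline{\xi} - \xi \in N_\xi$. Using that $A_n$ fixes $\overline{\xi}$ and annihilates $N_\xi$ in the limit, I conclude
\begin{align*}
\norm{A_n \xi - \overline{\xi}} = \norm{A_n(\xi - \overline{\xi})} \longrightarrow 0,
\end{align*}
which is exactly $(1)$, and identifies the limit with the $\overline{\xi}$ of condition $(4)$.

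I expect the only delicate points to be bookkeeping: verifying that the vector-valued integral is a well-defined Bochner integral whose value is a barycenter in the closed convex hull (resting on (C) and (P1)), and carrying out the change of variables in the F\o lner estimate correctly in the semigroup setting, where the absence of inverses forces one to argue via the translation-pushforward of $m|_{K_n}$ rather than a direct substitution. Once these are in place, the rest is the standard Eberlein-type decomposition into fixed vectors and the closed span of coboundaries, so no genuine obstruction remains.
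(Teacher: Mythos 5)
Your proposal is correct, and for three of the four implications it coincides with the paper's proof: $(1)\Rightarrow(2)$ is immediate in both, $(3)\Leftrightarrow(4)$ is Mazur's theorem in both, and your $(4)\Rightarrow(1)$ is exactly the paper's argument (approximate $\overline{\xi}$ by a convex combination $\sum_i\lambda_i\alpha_{g_i}\xi$, bound $\norm{A_n\xi - A_n(\alpha_{g_i}\xi)}$ by $\frac{m(K_ng_i\,\Delta\,K_n)}{m(K_n)}\norm{\xi}$ via right invariance of $m$, and finish with $A_n\overline{\xi}=\overline{\xi}$ and $\sup_n\norm{A_n}\leq 1$); your reformulation through the closed span $N_\xi$ of coboundaries is just a repackaging of the same $\varepsilon/3$-estimate. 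The one step where you genuinely diverge is $(2)\Rightarrow(3)$: the paper argues by contradiction, separating $\overline{\xi}$ from $\overline{co}^{\norm{\cdot}}\{\alpha_g\xi\}$ with a Hahn--Banach functional $\Lambda$ and integrating the resulting inequality over $K_{n_k}$ to contradict weak convergence, whereas you use the barycenter property of the Bochner integral to place each $A_{n_k}\xi$ directly inside $\overline{co}^{\norm{\cdot}}\{\alpha_g\xi\}$, which is weakly closed by Mazur, so the weak limit stays there. The two arguments are essentially dual to each other (the paper's integration of $\mathrm{Re}\,\Lambda$ over $K_{n_k}$ is the functional form of your barycenter fact), but yours is shorter and more transparent, at the cost of invoking the standard fact that an average against a probability measure of a norm-continuous integrand lies in the closed convex hull of its range (cf.\ \cite[Theorem 3.27]{rudin1991}, which the paper uses elsewhere). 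Two small inaccuracies, neither a gap: your claim that $A_n\to 0$ \emph{uniformly on bounded sets} of $N_\xi$ does not follow from pointwise convergence on a dense set plus $\norm{A_n}\leq 1$ (that would assert $\norm{A_n|_{N_\xi}}\to 0$); only pointwise convergence at the single vector $\xi-\overline{\xi}$ follows, and that is all the proof needs. Likewise your symmetric estimate $\norm{\alpha_h A_n\eta - A_n\eta}\to 0$ is never used. Your caution about the semigroup change of variables is well placed and matches the paper's standing hypothesis that $m$ is both left and right invariant.
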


	%
	%
	
	\begin{proof}
		The proof of $(1) \Rightarrow (2)$ is clear. Proof of $(3) \Leftrightarrow (4)$ implies from the Mazur's theorem which states that any convex subset of $E$ has same closure with respect to norm and weak topology.\\
		
		$(2) \Rightarrow (3): $ Let $\xi \in E$. By hypothesis, there is $\overline{\xi} \in E^G$ and a subsequence $(n_k)$ such that weak-$\lim_{k \to \infty} A_{n_k} \xi= \overline{\xi}$. We claim that $\overline{\xi} \in \overline{co}^{weak} \{\alpha_g \xi : g \in G \}$. For this it is enough to show that $\overline{\xi} \in \overline{co}^{\norm{\cdot}} \{\alpha_g \xi : g \in G \}$. Suppose it is not true, then by Hahn- Banach separation theorem there exists $\Lambda \in E^*$ and $a>0$ such that
		\begin{align*}
			Re \Lambda (\overline{\xi}) \geq a + Re \Lambda(f) \text{ for all } f \in \overline{co}^{\norm{\cdot}} \{\alpha_g \xi : g \in G \}.
		\end{align*}
		In particular, 
		\begin{align*}
			Re \Lambda (\overline{\xi}) \geq a + Re \Lambda(\alpha_g(\xi)) \text{ for all } g \in G.
		\end{align*}
		Therefore, 
		\begin{align*}
			a + Re \Lambda(A_{n_k}(\xi))
			&= a + \frac{1}{m(K_{n_k})} \int_{K_{n_k}} Re \Lambda(\alpha_g(\xi)) dm(g) \\
			&\leq \frac{1}{m(K_{n_k})} \int_{K_{n_k}} Re \Lambda (\overline{\xi}) dm(g) \\
			&= Re \Lambda (\overline{\xi}) \text{ for all } k \in \N.
		\end{align*}
		Now passing limit as $k \to \infty$, we obtain $a+ Re \Lambda (\overline{\xi}) \leq Re \Lambda (\overline{\xi})$, which is a contradiction. \\
		
		$(4) \Rightarrow (1): $ Let $\xi \in E$ and $\epsilon>0$.  We first find a convex combination $\xi':= \sum_{1}^{m} \lambda_i \alpha_{g_i}(\xi)$, where $\sum_{1}^{m} \lambda_i = 1$ such that $\norm{\overline{\xi}- \xi'}< \epsilon$. Also for all $n \in \N$, note that 
		\begin{align*}
			A_n(\xi) - A_n(\xi')&= \sum_{1}^{m} \lambda_i  (A_n(\xi) - A_n(\alpha_{g_i}(\xi)))\\
			&= \sum_{1}^{m} \lambda_i \frac{1}{m(K_n)}\Big[\int_{K_n} \alpha_h(\xi) dm(h) - \int_{K_n} \alpha_{hg_i} (\xi) dm(h) \Big]\\
			&=  \sum_{1}^{m} \lambda_i \frac{1}{m(K_n)} \Big[\int_{K_n} \alpha_h(\xi) dm(h) - \int_{K_n g_i} \alpha_{h} (\xi) dm(h) \Big].
		\end{align*}
		Therefore, for all $n \in \N$ we have
		\begin{align*}
			\norm{A_n(\xi) - A_n(\xi')} \leq C \norm{\xi} \sum_{1}^{m} \lambda_i\frac{m(K_n g_i \Delta K_n)}{m(K_n)}.
		\end{align*}
		Now by the F\o lner condition, we choose $n_0 \in \N$ such that $\norm{A_n(\xi) - A_n(\xi')} \leq C \norm{\xi} \epsilon$ for all $n \geq n_0$. Now since $A_n(\overline{\xi})= \overline{\xi}$ for all $n \in \N$, we have
		\begin{align*}
			\norm{A_n(\xi) - \overline{\xi}} &\leq \norm{A_n(\xi) - A_n(\xi')} + \norm{A_n(\xi' - \overline{\xi})}\\
			& \leq C\norm{\xi} \epsilon + \norm{A_n} \norm{\xi' - \overline{\xi}}\\
			& \leq C \epsilon (\norm{\xi} + 1) \text{ for all } n \geq n_0.
		\end{align*}
		This completes the proof.
	\end{proof}

	\section{Neveu Decomposition}
	
Let $G$ be a amenable semigroup and $M$ be a von Neumann algebra. Suppose $(M, G, \alpha)$ is a non-commutative dynamical system.  In the beginning of this section we discuss the existence of invariant states for $(M, G, \alpha)$. 
Later part of this section we assume that $M$ is semifinite von Neumann algebra with a f.n.s trace $\tau$ and then  study   the existence of weakly wandering operator for $(M, G, \alpha)$ and we obtain the Neveu  decomposition for $(M, G, \alpha)$  and thus this completely settle the problem of Neveu decomposition for the  action of amenable semigroup on semifinite von Neumann algebras. We begin with the following proposition. 
	
	\begin{prop}\label{existence of inv normal state}
		Let $M$ be a von Neumann algebra equipped with a f.n  state $\varphi$ and $(M, G, \alpha)$ be a non-commutative dynamical system. Further, for $ e\in \CP_0(M)$,  assume that the following holds.
		\begin{align}\label{inf condition-1}
			\inf_{n \in \N} A_n(\varphi)(p)>0, ~  \text{ for all }  p \in \CP_0(M) \text{ with  }   p \leq e. 
		\end{align}
		Then, there exists an invariant normal state $\nu_{\varphi}$ with $s(\nu_{\varphi}) \geq e$.
	\end{prop}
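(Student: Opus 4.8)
The plan is to realize the desired state as a weak-$*$ cluster point of the averaged functionals $A_n^*\varphi$, to pass to its normal part via the Takesaki decomposition, and to pin down the support by a maximality argument. Since $\alpha_g(1)\le 1$, each $A_n^*\varphi$ is a positive functional of norm $\le 1$, so by Banach--Alaoglu the net $\{A_n^*\varphi\}$ admits a weak-$*$ cluster point $\psi\in M^*$ with $\psi\ge 0$. Invariance of $\psi$ is forced by the F\o lner property, exactly as in the computation $(4)\Rightarrow(1)$ of Theorem \ref{mean erg thm}: for fixed $g\in G$, right invariance of $m$ gives $A_n(\alpha_g x)=\frac{1}{m(K_n)}\int_{K_n g}\alpha_h(x)\,dm(h)$, whence $|A_n^*\varphi(\alpha_g x)-A_n^*\varphi(x)|\le \frac{m(K_n g\,\Delta\,K_n)}{m(K_n)}\,\norm{x}\to 0$; passing to the cluster point yields $\psi(\alpha_g x)=\psi(x)$ for all $g,x$. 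The hypothesis \eqref{inf condition-1} enters precisely here: for every nonzero projection $p\le e$ we get $\psi(p)\ge \inf_n A_n^*\varphi(p)>0$, so $\psi$ is faithful on $eMe$.

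Next I would extract normality. By Theorem \ref{takesaki decomposition} write $\psi=\psi_n+\psi_s$ with $\psi_n$ normal, $\psi_s$ singular, both positive. Using normality of each $\alpha_g$, the predual adjoint respects this splitting, so $\psi_n$ is again $G$-invariant (here one must be slightly careful when $\alpha_g(1)\ne 1$, where one only gets a priori sub-invariance and must either verify genuine invariance or re-average). The non-vanishing survives: given nonzero $p\le e$, singularity of $\psi_s$ together with Theorem \ref{defining property of singular functional} produces a nonzero $p'\le p$ with $\psi_s(p')=0$, so $\psi_n(p')=\psi(p')>0$ and hence $\psi_n(p)>0$. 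Normalizing $\psi_n$ gives a candidate invariant normal state.

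The main obstacle is the support statement $s(\nu_\varphi)\ge e$. The non-vanishing above yields only $e\wedge s(\psi_n)^{\perp}=0$, and in a non-commutative algebra this is strictly weaker than $e\le s(\psi_n)$ (already in $M_2$ a vector state can satisfy the former while failing the latter), so faithfulness on $eMe$ alone does not close the argument. To remedy this I would pass to a maximal object: because $M$ carries the f.n.\ state $\varphi$ it is $\sigma$-finite, so the supremum $e_1:=\sup\{\,s(\nu):\nu\text{ an invariant normal state}\,\}$ is attained by a single invariant normal state $\nu_0$ (take a countable convex combination of states whose supports realize the sup). Invariance of $\nu_0$ then forces $\alpha_g(e_1^{\perp})$ to have range in $e_1^{\perp}$, so $e_1^{\perp}M e_1^{\perp}$ is a reduced dynamical system carrying no nonzero invariant normal state, and the remaining task is to show $e\le e_1$.

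I expect the real difficulty to lie in this last step, and it is exactly the feature that separates positive contractions from $*$-automorphisms: the compression onto $e_1^{\perp}M e_1^{\perp}$ does \emph{not} intertwine $\alpha_g$ with the compressed maps, so one cannot simply transport states between $M$ and the corner, nor convert $e\wedge e_1^{\perp}=0$ into genuine containment. The strategy I would pursue is to assume $e\not\le e_1$, locate (via the polar decomposition of $e_1^{\perp}e$) a nonzero projection under $e$ whose averages are controlled by the dynamics on the corner $e_1^{\perp}M e_1^{\perp}$, and then use the absence of an invariant normal state there — together with the Banach-space mean ergodic theorem (Theorem \ref{mean erg thm}) — to force $\inf_n A_n^*\varphi(p)=0$ for some nonzero $p\le e$, contradicting \eqref{inf condition-1}. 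This is the point at which the paper's later structural inputs (such as Lemma \ref{main lem-semifinite2} and Lemma \ref{req cpt set}) are likely indispensable.
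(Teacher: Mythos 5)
Up to the support statement, your proposal follows the paper's proof step for step: the paper likewise extracts a weak-$*$ convergent subsequence of $\{A_n(\varphi)\}$ by Banach--Alaoglu, gets $G$-invariance of the limit $\overline{\varphi}$ from the F\o lner property, splits $\overline{\varphi}=\overline{\varphi}_n+\overline{\varphi}_s$ by Theorem \ref{takesaki decomposition}, and proves $\nu_{\varphi}(p)>0$ for every $0\neq p\leq e$ by choosing, via Theorem \ref{defining property of singular functional}, a nonzero $p'\leq p$ with $\overline{\varphi}_s(p')=0$, so that $\nu_\varphi(p')$ is controlled from below by $\inf_n A_n(\varphi)(p')$. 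The subunitality caveat you flag is settled in the paper by a short trick you could adopt verbatim: uniqueness of the normal--singular decomposition applied to $\overline{\varphi}=\overline{\varphi}\circ\alpha_g$ gives $(\overline{\varphi}_s\circ\alpha_g)_s=\overline{\varphi}_s$, and evaluating at $1$ using $\alpha_g(1)\leq 1$ forces $(\overline{\varphi}_s\circ\alpha_g)_n(1)\leq 0$, hence $(\overline{\varphi}_s\circ\alpha_g)_n=0$ and $\overline{\varphi}_n\circ\alpha_g=\overline{\varphi}_n$ exactly, not merely up to sub-invariance.

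The step you leave open --- upgrading ``$\nu_{\varphi}(p)>0$ for all $0\neq p\leq e$'', i.e.\ $e\wedge s(\nu_{\varphi})^{\perp}=0$, to $e\leq s(\nu_{\varphi})$ --- is a genuine gap in your write-up, but your suspicion about it is well founded: the paper does not bridge it either, since its proof ends by tacitly identifying these two conditions, and your $M_2$ remark shows they differ for a fixed pair $(e,\nu)$. In fact the strong conclusion can fail outright, so the repair you sketch cannot be completed: take $M=\CB(\ell^2(\Z))\otimes M_2$, $G=\Z$ with $K_n=\{0,\dots,n-1\}$, and $\alpha=\Ad w$ for the diagonal unitary $w=1\oplus u$ with $u$ the bilateral shift. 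Any $\alpha$-invariant density matrix has vanishing off-diagonal blocks (as $1$ is not an eigenvalue of $u$) and vanishing $(2,2)$-block (no nonzero trace-class operator commutes with $u$), so every invariant normal state is supported under $e_1=1\oplus 0$; yet for $e$ the rank-one projection onto $\C(\xi,\xi)$ and an invariant state $\rho$ with faithful density $a\oplus 0$ one has $\rho(e)=\tfrac12\langle a\xi,\xi\rangle>0$, so Proposition \ref{inf condition-3} yields hypothesis \eqref{inf condition-1} for $e$ (whose only nonzero subprojection is $e$ itself), while no invariant normal state has support $\geq e$. The tenable conclusion is exactly the corner-faithfulness you actually proved, and it is what the paper's later arguments use: in Theorem \ref{weakly-wandering} the relevant projection lies under $1-e$, where $\nu(p)>0$ for all $0\neq p\leq p_0$ already contradicts $s(\nu)\leq e$, and in (2)$\Rightarrow$(1) of Theorem \ref{supp of maximal state} maximality plus corner-faithfulness forces $s(\nu_{\varphi})=e$ since $\nu_\varphi(e-s(\nu_\varphi))\leq 0$; only (1)$\Rightarrow$(2) of Theorem \ref{supp of maximal state} invokes the strong form and would need restating. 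So your argument matches the paper as far as it goes, and the step you declare to be the ``real difficulty'' is not merely difficult but false as stated; the correct fix is to weaken the proposition's conclusion to $e\wedge s(\nu_{\varphi})^{\perp}=0$, not to search for the stronger containment.
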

	
	\begin{proof}
		First note that since $\norm{A_n(\varphi)} \leq \norm{\varphi}$ for all $n \in \N$, by Banach-Alaglou theorem applied on $M^*$, we obtain a subsequence $\{A_{n_k}(\varphi)\}_{k \in \N} \subseteq M^*_1$, which converges pointwise. We define 
		\begin{align*}
			\overline{\varphi}(x):= \lim_{k \to \infty} A_{n_k}(\varphi)(x), x \in M.
		\end{align*}
		We claim that $\overline{\varphi} \circ \alpha_h= \overline{\varphi}$ for all $h \in G$. Indeed, for all $h \in G$, $x \in M$ and $k \in \N$ we have
		\begin{align*}
			\abs{\overline{\varphi}(\alpha_h(x))- \overline{\varphi}(x)} \leq  & \abs{\overline{\varphi}(\alpha_h(x)) - A_{n_k}(\varphi)(\alpha_h(x))} +  \\ 
			& \abs{ A_{n_k}(\varphi)(\alpha_h(x)) - A_{n_k}(\varphi)(x)} + \abs{A_{n_k}(\varphi)(x) - \overline{\varphi}(x)}\\
			\leq  & \abs{\overline{\varphi}(\alpha_h(x)) - A_{n_k}(\varphi)(\alpha_h(x))} +  \\ 
			& \frac{m(K_{n_k} h \Delta K_{n_k})}{m(K_{n_k})}  + \abs{A_{n_k}(\varphi)(x) - \overline{\varphi}(x)}.
		\end{align*}
		Hence by F\o lner condition and the definition of $\overline{\varphi}$, the right hand side of the above equation converges to $0$. Therefore, we conclude that $\alpha_h^*(\overline{\varphi})= \overline{\varphi}$ for all $h \in G$.
		It is clear that since $\varphi$ is a state, $\overline{\varphi}$ is also a positive linear functional and $\overline{\varphi}(1)=1$. Let 
		\begin{align*}
			\overline{\varphi}= \overline{\varphi}_n + \overline{\varphi}_s
		\end{align*}
		be the decomposition of $\overline{\varphi}$ in accordance with \cite[Theorem 3]{Takesaki1958}, where $\overline{\varphi}_n$ is a normal linear functional and $\overline{\varphi}_s$ is a positive linear functional which is singular. Therefore, we have, $\overline{\varphi} \circ \alpha_g = \overline{\varphi}_n \circ \alpha_g + \overline{\varphi}_s \circ \alpha_g$ for all $g \in G$.
		
		Now fix a $g \in G$ and further decomposing $\overline{\varphi}_s \circ \alpha_g$ in normal and singular component  and  we obtain
		\begin{align}\label{n-s component eq1}
			\overline{\varphi}_n + \overline{\varphi}_s= \overline{\varphi} =\overline{\varphi} \circ \alpha_g = \overline{\varphi}_n \circ \alpha_g + (\overline{\varphi}_s \circ \alpha_g)_n + (\overline{\varphi}_s \circ \alpha_g)_s,
		\end{align}
		which implies
		\begin{align*}
			\overline{\varphi}_n - \overline{\varphi}_n \circ \alpha_g - (\overline{\varphi}_s \circ \alpha_g)_n= (\overline{\varphi}_s \circ \alpha_g)_s - \overline{\varphi}_s= 0.
		\end{align*}
		Hence, first observe that $(\overline{\varphi}_s \circ \alpha_g)_s = \overline{\varphi}_s$ and 	$\overline{\varphi}_n - \overline{\varphi}_n \circ \alpha_g - (\overline{\varphi}_s \circ \alpha_g)_n =0$. Now we wish to show that $$(\overline{\varphi}_s \circ \alpha_g)_n =0.$$
		Indeed, observe the following  
		\begin{align*}
		&  \overline{\varphi}_s = (\overline{\varphi}_s \circ \alpha_g)_s\\
		\implies& \overline{\varphi}_s  +(\overline{\varphi}_s \circ \alpha_g)_n  = (\overline{\varphi}_s \circ \alpha_g)_s +(\overline{\varphi}_s \circ \alpha_g)_n = \overline{\varphi}_s \circ \alpha_g\\
		\implies &  \overline{\varphi}_s(1)  +(\overline{\varphi}_s \circ \alpha_g)_n(1) = \overline{\varphi}_s \circ \alpha_g(1) \leq \overline{\varphi}_s (1), \text{ as } \alpha_g(1) \leq 1\\
		\implies & (\overline{\varphi}_s \circ \alpha_g)_n(1)\leq 0. 
		\end{align*}
As $(\overline{\varphi}_s \circ \alpha_g)_n $ is positive  and $(\overline{\varphi}_s \circ \alpha_g)_n(1)\leq 0$, so, $(\overline{\varphi}_s \circ \alpha_g)_n =0$.	Thus we have 	
$$ \overline{\varphi}_n = \overline{\varphi}_n \circ \alpha_g \text{ for all } g \in G.$$		

		Therefore, $\overline{\varphi}_n$ is a normal linear functional which is $G$-invariant. We define, $\nu_{\varphi}:= \frac{1}{\overline{\varphi}_n(1)}\overline{\varphi}_n$.
		
		To show $s(\nu_{\varphi}) \geq e$, we first let $p$ be any non-zero subprojection of $e$ in $M$. Then by \cite[Theorem 3.8, pp. 134]{Takesaki2002}, observe that there exists a non-zero subprojection $p'$ of $p$ in $M$ such that $\overline{\varphi}_s(p')=0$. Therefore, we have
		\begin{align*}
			\nu_{\varphi}(p) \geq \nu_{\varphi}(p')=  \frac{1}{\overline{\varphi}_n(1)} \overline{\varphi}(p')= \frac{1}{\overline{\varphi}_n(1)} \lim_{k \to \infty} A_{n_k}(\varphi)(p') \geq  \frac{1}{\overline{\varphi}_n(1)} \inf_{n \in \N} A_n(\varphi)(p')>0.
		\end{align*}
		This completes the proof.
	\end{proof}
	
	The following theorem characterizes to find a maximal invariant state in term of its support  projection satisfying the condition in eq. \ref{inf condition-1}. 
	
	\begin{thm}\label{supp of maximal state}
		Let $(M, G, \alpha)$ be a non-commutative dynamical system with a with a f.n  state $\varphi \in M_*$ . Then for $ e\in \CP_0(M)$, the following statements are equivalent.
		\begin{enumerate}
			\item There exists an invariant normal state $\rho$ on $M$ with $s(\rho)= e$ such that,
			if $\nu$ is any invariant normal state  on $M$, then   $s(\nu) \leq e$.
			\item $e$ is the maximal projection satisfying the following condition:
			\begin{align}\label{inf condition-2}
				\inf_{n \in \N} A_n(\varphi)(p)>0, ~  \text{ for all }  p \in \CP_0(M) \text{ with  }   p \leq e. 
			\end{align} 
		\end{enumerate}
	\end{thm}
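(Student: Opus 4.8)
The plan is to reduce both implications to a single key fact and then close everything with Proposition~\ref{existence of inv normal state} by a short comparison of projections. The key fact is: \emph{the support projection of any $G$-invariant normal state satisfies the infimum condition} \eqref{inf condition-2}. Precisely, I would isolate and prove the following claim. Let $\nu$ be any $G$-invariant normal state and put $f = s(\nu)$; then $\inf_{n \in \N} A_n(\varphi)(p) > 0$ for every $p \in \CP_0(M)$ with $p \leq f$. Everything else in the proof is bookkeeping once this claim is available.

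To prove the claim I argue by contradiction. If $\inf_n A_n(\varphi)(p) = 0$ for some $0 \neq p \leq f$, choose a subsequence with $\varphi(A_{n_k}(p)) \to 0$. Each $A_{n_k}(p)$ is positive and, by condition (U) together with positivity of the averages, satisfies $A_{n_k}(p) \leq A_{n_k}(1) \leq 1$. The main step --- and the only genuinely analytic point --- is to upgrade the scalar convergence $\varphi(A_{n_k}(p)) \to 0$, tested against the \emph{single} faithful state $\varphi$, to $\sigma$-weak convergence $A_{n_k}(p) \to 0$. Working in the GNS representation of $\varphi$ with its cyclic and separating vector $\xi_\varphi$, the identity $\norm{A_{n_k}(p)^{1/2}\xi_\varphi}^2 = \varphi(A_{n_k}(p)) \to 0$ together with $\norm{A_{n_k}(p)} \leq 1$ gives $A_{n_k}(p)\xi_\varphi \to 0$ in $\CH$. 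Since $\xi_\varphi$ is cyclic for $M'$ and the sequence is uniformly bounded, this forces $A_{n_k}(p) \to 0$ in the strong, hence in the $\sigma$-weak, topology, so that $\psi(A_{n_k}(p)) \to 0$ for every $\psi \in M_*$. Applying this to the invariant $\nu$ and using the averaging identity from \S2 and invariance, $\nu(A_{n_k}(p)) = \nu(p)$ for all $k$, so $\nu(p) = 0$; but $p \leq f = s(\nu)$ and $\nu$ is faithful on $fMf$, forcing $p = 0$, a contradiction. This passage from one faithful test functional to all normal functionals is the step I expect to be the main obstacle.

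With the claim established, I would deduce $(1) \Rightarrow (2)$ as follows. Applying the claim to $\rho$ shows that $e = s(\rho)$ itself satisfies \eqref{inf condition-2}. If $e'$ is any projection satisfying \eqref{inf condition-2}, then Proposition~\ref{existence of inv normal state} produces an invariant normal state $\nu_\varphi$ with $s(\nu_\varphi) \geq e'$, and the maximality hypothesis in (1) gives $s(\nu_\varphi) \leq e$; hence $e' \leq e$. Thus $e$ is the largest projection satisfying \eqref{inf condition-2}, which is (2).

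Conversely, for $(2) \Rightarrow (1)$, Proposition~\ref{existence of inv normal state} yields an invariant normal state $\nu_\varphi$ with $s(\nu_\varphi) \geq e$; by the claim the projection $s(\nu_\varphi)$ satisfies \eqref{inf condition-2}, so the maximality of $e$ forces $s(\nu_\varphi) \leq e$, and therefore $s(\nu_\varphi) = e$. Setting $\rho = \nu_\varphi$ gives the required invariant state with $s(\rho) = e$. Finally, for an arbitrary invariant normal state $\nu$, the claim shows $s(\nu)$ satisfies \eqref{inf condition-2}, whence $s(\nu) \leq e$ by maximality; this is exactly the domination asserted in (1), completing the equivalence.
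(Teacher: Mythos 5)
Your proposal is correct, and its skeleton coincides with the paper's: both directions are run through Proposition~\ref{existence of inv normal state} exactly as you do, and the load-bearing fact in both is that the support of any $G$-invariant normal state satisfies the infimum condition \eqref{inf condition-2}. The one genuine difference is how that fact is obtained. The paper does not prove it: it invokes Proposition~\ref{inf condition-3} (quoted from \cite{Bik-Dip-neveu} without proof), which is the stronger pointwise statement $\inf_{g \in G} \alpha_g^*(\varphi)(x) > 0$ whenever $\rho(x) \neq 0$ for an invariant $\rho$; the averaged version you need follows since each $A_n(\varphi)(x)$ is an average of the numbers $\alpha_g^*(\varphi)(x)$. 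You instead prove the averaged version in-line, by contradiction: from $\varphi(A_{n_k}(p)) \to 0$ with $0 \leq A_{n_k}(p) \leq 1$ you upgrade, via the GNS representation of the faithful normal $\varphi$ (separating vector, cyclicity for $M'$, uniform boundedness, and the coincidence of WOT and the $\sigma$-weak topology on bounded sets), to $\psi(A_{n_k}(p)) \to 0$ for all $\psi \in M_*$, then test against the invariant $\nu$ using $\nu(A_{n_k}(p)) = \nu(p)$ and faithfulness of $\nu$ on $s(\nu)Ms(\nu)$. This argument is sound, and it is in fact the same device the paper itself uses later inside the proof of Theorem~\ref{weakly-wandering} (``as $\varphi$ is a f.n state, $A_{n_k}(q) \to 0$ in SOT, equivalently $\mu(A_{n_k}(q)) \to 0$ for all $\mu \in M_*$''). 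So what your route buys is self-containedness --- the theorem no longer leans on an external proposition --- at the cost of proving only the averaged infimum statement rather than the pointwise one; for this theorem the averaged form is all that is used, so nothing is lost. Your handling of maximality (reading it as ``$e$ dominates every projection satisfying \eqref{inf condition-2}'', which your $(1) \Rightarrow (2)$ actually establishes) matches the reading the paper needs to conclude $s(\nu) \leq e$ for arbitrary invariant $\nu$ in $(2) \Rightarrow (1)$.
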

	
Before proving this theorem we recall the following proposition without a proof. The proof of  the proposition is straightforward, but reader may also look at  \cite[Proposition 3.4]{Bik-Dip-neveu}.	
	
	\begin{prop}\label{inf condition-3}
		Let $(M, G, \alpha)$ be a non-commutative dynamical system with a f.n  state $\varphi \in M_*$.  If there exists a  $\rho \in M_{* +}$ such that $\alpha_g^*(\rho)= \rho$, then for any  $x \in M_+$ with $\rho(x) \neq 0$, we have $\inf_{g \in G} \alpha_g^*(\varphi)(x) >0$.
	\end{prop}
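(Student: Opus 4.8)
The plan is to argue by contradiction. Suppose the conclusion fails, i.e. $\inf_{g \in G}\alpha_g^*(\varphi)(x)=0$, while $\rho(x)>0$ (note that $\rho(x)>0$ since $x\ge 0$, $\rho\ge 0$ and $\rho(x)\ne 0$). Choosing a sequence $(g_n)\subseteq G$ with $\varphi(\alpha_{g_n}(x))=\alpha_{g_n}^*(\varphi)(x)\to 0$ and writing $y_n:=\alpha_{g_n}(x)$, positivity of $\alpha_{g_n}$ together with condition (U) gives $0\le y_n\le \norm{x}\,\alpha_{g_n}(1)\le \norm{x}\,1$, so $(y_n)$ is a norm-bounded sequence of positive elements. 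Meanwhile, invariance $\alpha_{g_n}^*(\rho)=\rho$ yields $\rho(y_n)=\rho(\alpha_{g_n}(x))=\alpha_{g_n}^*(\rho)(x)=\rho(x)>0$ for every $n$. Thus everything reduces to the following statement: if $(y_n)$ is a bounded sequence in $M_+$ with $\varphi(y_n)\to 0$, then $\rho(y_n)\to 0$; granting this we reach $\rho(x)=0$, a contradiction.

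To prove the reduced statement I would pass to the GNS representation $(\pi_\varphi,\CH_\varphi,\xi_\varphi)$ of $\varphi$, which is faithful and normal since $\varphi$ is a faithful normal state. For $y_n\ge 0$ one has $\varphi(y_n)=\norm{\pi_\varphi(y_n)^{1/2}\xi_\varphi}^2\to 0$, and since $\norm{\pi_\varphi(y_n)^{1/2}}\le \norm{x}^{1/2}$, a Cauchy--Schwarz estimate in $\CH_\varphi$ gives, for all $a,b\in M$, $\varphi(b^*y_na)=\langle \pi_\varphi(y_n)^{1/2}\pi_\varphi(a)\xi_\varphi,\ \pi_\varphi(y_n)^{1/2}\pi_\varphi(b)\xi_\varphi\rangle\to 0$. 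As $\pi_\varphi(M)\xi_\varphi$ is dense in $\CH_\varphi$ and $(\pi_\varphi(y_n))$ is bounded, this forces $\pi_\varphi(y_n)\to 0$ in the weak operator topology; since the weak operator and $\sigma$-weak topologies coincide on bounded sets, the convergence also holds $\sigma$-weakly.

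Finally I would transport $\rho$ through the isomorphism $\pi_\varphi$: the functional $\rho\circ\pi_\varphi^{-1}$ is positive and normal on the von Neumann algebra $\pi_\varphi(M)$, hence $\sigma$-weakly continuous, so $\rho(y_n)=(\rho\circ\pi_\varphi^{-1})(\pi_\varphi(y_n))\to 0$, contradicting $\rho(y_n)=\rho(x)>0$. This closes the argument.

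The extraction of the sequence, the norm bound via (U), and the invariance identity are routine. The one substantive step --- the \emph{main obstacle} --- is the implication ``bounded and $\varphi(y_n)\to 0$ $\Rightarrow$ $\rho(y_n)\to 0$'': the mere scalar smallness $\varphi(y_n)\to 0$ is weak, and one genuinely needs faithfulness and normality of $\varphi$ (to upgrade it to $\sigma$-weak convergence of $\pi_\varphi(y_n)$) together with normality of $\rho$ (to pass to the limit). An alternative route would phrase this as a uniform absolute-continuity property of normal functionals with respect to $\varphi$ on bounded sets, but the GNS argument seems the most transparent.
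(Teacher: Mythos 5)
Your overall architecture is sound (contradiction, extraction of $y_n=\alpha_{g_n}(x)$, the invariance identity $\rho(y_n)=\rho(x)>0$, and the reduction to: bounded $y_n\in M_+$ with $\varphi(y_n)\to 0$ forces $\rho(y_n)\to 0$), but there is a genuine gap at precisely the step you single out as the main obstacle. From $\varphi(y_n)\to 0$ you control only $\norm{\pi_\varphi(y_n)^{1/2}\xi_\varphi}$. Cauchy--Schwarz bounds $\abs{\varphi(b^*y_na)}$ by $\varphi(a^*y_na)^{1/2}\,\varphi(b^*y_nb)^{1/2}$, and \emph{neither} factor is known to tend to $0$: the identification $\norm{\pi_\varphi(y_n)^{1/2}\pi_\varphi(b)\xi_\varphi}=\norm{\pi_\varphi(b)\pi_\varphi(y_n)^{1/2}\xi_\varphi}$ that your argument implicitly uses is false in general, since $\pi_\varphi(y_n)^{1/2}$ need not commute with $\pi_\varphi(b)$. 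For a non-tracial f.n.\ state there is no inequality of the form $\varphi(b^*yb)\le C\norm{b}^2\varphi(y)$ on $M_+$ (its validity for all $y\ge 0$ essentially forces traciality), so the assertion ``$\varphi(y_n)\to 0\Rightarrow\varphi(b^*y_nb)\to 0$'' is equivalent to the $\sigma$-weak convergence you are in the middle of proving; as written the argument is circular. Put differently, you test weak operator convergence of $\pi_\varphi(y_n)$ against the cyclic subspace $\pi_\varphi(M)\xi_\varphi$, but cyclicity is not the property that helps here.

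The reduced statement is nevertheless true, and the repair uses faithfulness in the right place: since $\varphi$ is faithful, $\xi_\varphi$ is \emph{separating} for $\pi_\varphi(M)$, hence cyclic for the commutant $\pi_\varphi(M)'$. One has $\norm{\pi_\varphi(y_n)\xi_\varphi}\le\norm{\pi_\varphi(y_n)^{1/2}}\,\norm{\pi_\varphi(y_n)^{1/2}\xi_\varphi}\le\norm{x}^{1/2}\varphi(y_n)^{1/2}\to 0$, so for $a',b'\in\pi_\varphi(M)'$, $\langle\pi_\varphi(y_n)a'\xi_\varphi,\,b'\xi_\varphi\rangle=\langle(b')^*a'\pi_\varphi(y_n)\xi_\varphi,\,\xi_\varphi\rangle\to 0$; here the commutation is legitimate, and boundedness of $(y_n)$ together with density of $\pi_\varphi(M)'\xi_\varphi$ gives weak operator convergence $\pi_\varphi(y_n)\to 0$, hence $\sigma$-weak convergence on the bounded set, after which normality of $\rho$ finishes exactly as in your last paragraph. (This is the standard fact that on bounded parts of $M$ the seminorm $x\mapsto\varphi(x^*x)^{1/2}$ induces the $\sigma$-strong topology when $\varphi$ is a f.n.\ state.) For comparison with the paper: it gives no proof here, calling the proposition straightforward and pointing to \cite[Proposition 3.4]{Bik-Dip-neveu}; the intended argument is essentially this short strong-topology observation, so your reduction matches the expected route and only the density step needs the correction above.
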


	\begin{proof}[Proof of Theorem \ref{supp of maximal state}]
		\emph{(1) $\Rightarrow$ (2):} Since $e$ is the support of the invariant state $\rho$, we have $\rho(p)>0$ for all projection $0 \neq p \leq e$ in $M$. Therefore, by virtue of the  Proposition \ref{inf condition-3} we conclude that 
		\begin{align*}
			\inf_{n \in \N} A_n(\varphi)(p)>0, \text{ for all }   p \in \mathcal{P}_0(M) \text{ with } p \leq e.
		\end{align*} 
		Now suppose that $e$ is not the maximal projection that satisfy the condition in eq. \ref{inf condition-2}. Therefore, there exists a non-zero  $f \in \CP(M)$  which is not a sub-projection of $e$ but satisfies the condition in eq. \ref{inf condition-2}. Therefore, by Proposition \ref{existence of inv normal state} there exists an invariant normal state $\nu_{\varphi}$ on $M$ such that $s(\nu_{\varphi}) \geq f$. This is a contradiction to the hypothesis.
		
		\noindent \emph{(2) $\Rightarrow$ (1):} Since the non-zero projection $e$ satisfies the condition in eq. \ref{inf condition-2}, by virtue of Proposition \ref{existence of inv normal state}, there exists an invariant normal state  $\nu_{\varphi}$ on $M$ such that $s(\nu_{\varphi}) \geq e$. Again by applying Proposition \ref{inf condition-3}, one can show that $s(\nu_{\varphi})$ satisfies the condition in eq. \ref{inf condition-2}. Since $e$ is the maximal projection satisfying the condition in eq. \ref{inf condition-2}, we obtain $s(\nu_{\varphi}) \leq e$. Similarly it will also follow that $s(\nu)\leq e$ for any invariant normal state $\nu$ on $M$.
	\end{proof}

\begin{defn}\label{weakly wandering defn}
	Let $(M,  G, \alpha )$ be a covariant system and $x$ be a positive operator in $M$. Then $x$ is said to be a weakly wandering operator if 
	\begin{align*}
	\lim_{n \to \infty} \norm{A_nx} =0.
	\end{align*}
\end{defn}	
	Suppose $( M, G, \alpha)$  is  a non-commutative dynamical system and $e \in \CP_0(M)$ such that if $\varphi$ is any $G$-invariant state then $s(\varphi) \leq e$. Then we wish to show the existence of weakly wandering projection  $ q \in  \CP_0(M)$ with $q \leq 1-e $, i.e $ A_n(q) \xrightarrow{ n \rar \infty } 0 $ in $\norm{\cdot}$. 
For this purpose we assume that $M$ is semifinite von  Neumann algebra with a f.n semifinite trace $\tau$. Further, $p \in \CP_0(M) $, denote the reduced von Neumann algebra $pMp$ by $M_p$, i.e, $M_p = pMp$.  We like to emphasise that the following result is key to find the weakly wandering operators.

	\begin{lem}\label{main lem-semifinite2}
		Let $(M,\tau)$ be a semifinite von Neumann algebra and $r \in \CP_0(M)$.  If $K$ is a weak*-compact subset of $(M^*_1)_+$ such that $\mu_{\upharpoonleft M_{r}}$ is singular for all $\mu \in K$, then for all $f \in \CP_0(M_{r})$ there exists a $p \in \CP_0(M)$ with $p \leq f$ such that $\mu(p)=0$ for all $\mu \in K$.
	\end{lem}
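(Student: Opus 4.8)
The plan is to reduce to a finite von Neumann algebra, dispose of \emph{finite} subfamilies of $K$ by iterating Takesaki's criterion (Theorem \ref{defining property of singular functional}), and then leverage the weak*-compactness of $K$ to treat the whole family at once. Since any projection $p\le f$ lies in $M_r$ and satisfies $\mu(p)=\mu_{\upharpoonleft M_r}(p)$, I may replace $M$ by $M_r$ and assume that every $\mu\in K$ is a singular positive functional on $M_r$. Because $\tau$ is semifinite I may shrink $f$ to a subprojection of finite trace, so I assume $0<\tau(f)<\infty$ and work inside the finite corner $fM_rf$. For a projection $p\le f$ I set
\[
g(p):=\sup_{\mu\in K}\mu(p);
\]
as $\mu\mapsto\mu(p)$ is weak*-continuous and $K$ is weak*-compact, this supremum is attained, $0\le g(p)\le1$, and $g$ is monotone with $g(p_1+p_2)\le g(p_1)+g(p_2)$ for orthogonal $p_1,p_2$. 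The goal becomes: produce a non-zero $p\le f$ with $g(p)=0$.

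I first isolate two deterministic ingredients. \emph{(a) Small-complement form of singularity}: for a single singular $\mu$ and any $\varepsilon>0$ there is a projection $q\le f$ with $\tau(f-q)<\varepsilon$ and $\mu(q)=0$. Indeed, choose by Zorn a maximal family $\{q_i\}$ of pairwise orthogonal non-zero projections $\le f$ with $\mu(q_i)=0$; if $f-\sum_i q_i\ne0$, Theorem \ref{defining property of singular functional} would produce one more member, so $\sum_i q_i=f$, and since $\tau(f)<\infty$ the family is countable and the finite partial sums give the desired $q$. \emph{(b) Finite families}: for singular $\mu_1,\dots,\mu_m$ and any $\varepsilon>0$ there is $q\le f$ with $\tau(f-q)<\varepsilon$ and $\mu_j(q)=0$ for all $j$; this follows from (a) by induction, applying (a) to $\mu_m$ inside the corner cut down by the projection already built for $\mu_1,\dots,\mu_{m-1}$ and using monotonicity to keep the earlier functionals annihilated.

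The heart is a halving step using compactness: if $e\le f$ is non-zero with $g(e)=c>0$, then there is a non-zero $e'\le e$ with $g(e')\le c/2$. For each $\mu\in K$ I use (a) inside $eM_re$ to pick $q_\mu\le e$ with $\mu(q_\mu)=0$ and $\tau(e-q_\mu)$ as small as desired; by weak*-continuity $V_\mu:=\{\nu\in K:\nu(q_\mu)<c/2\}$ is a weak*-open neighbourhood of $\mu$. Extracting a finite subcover $V_{\mu_1},\dots,V_{\mu_m}$ and setting $e':=\bigwedge_j q_{\mu_j}$, one has $\mu_i(e')\le\mu_i(q_{\mu_i})=0$, while for \emph{every} $\nu\in K$ there is $j$ with $\nu\in V_{\mu_j}$, giving $\nu(e')\le\nu(q_{\mu_j})<c/2$; hence $g(e')\le c/2$, and $\tau(e-e')\le\sum_j\tau(e-q_{\mu_j})$. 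Iterating the halving yields a decreasing sequence $f\ge e_1\ge e_2\ge\cdots$ with $g(e_n)\le 2^{-n}g(f)$, and if the complements are kept summable across the iteration, $\inf_n\tau(e_n)>0$; then $p:=\bigwedge_n e_n=\lim_n e_n$ is non-zero (normality of $\tau$) and $\mu(p)\le g(p)\le g(e_n)\to0$, so $\mu(p)=0$ for all $\mu\in K$.

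The delicate point — and the step I expect to be the genuine obstacle — is the non-vanishing of the projections produced by the halving. A priori the size $m$ of the finite subcover depends on how small the complements $\tau(e-q_{\mu_j})$ are taken (shrinking them shrinks each $V_\mu$ and can force more sets into the cover), so the real work is to guarantee that $\sum_j\tau(e-q_{\mu_j})$ can be kept below $\tau(e)$ at every stage and summable over the iteration, thereby ensuring $e'\ne0$ and $\inf_n\tau(e_n)>0$. This is precisely where the weak*-compactness of $K$ and the finiteness of $\tau$ on the corner must enter quantitatively; ingredient (b), which supplies a common null projection of \emph{arbitrarily small} complement for the finitely many centres $\mu_1,\dots,\mu_m$, is the tool I would use to control this interplay, and an equivalent reformulation of the whole argument as showing that $s:=\sup\{\tau(p): p\le f,\ \mu(p)=0\ \forall\mu\in K\}$ is strictly positive may make the trace bookkeeping cleanest.
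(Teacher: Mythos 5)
Your reduction to the finite corner and your ingredient (a) coincide with the paper's opening moves: the paper likewise fixes $p \le f$ with $\tau(p)<\infty$, normalizes $\tau_p(x)=\tau(pxp)/\tau(p)$, and uses a sequence $p_n \uparrow p$ with $\mu(p_n)=0$ to get, for each \emph{single} $\mu \in K$, an element of large $\tau_p$-trace on which $\mu$ is small. But the heart of the lemma is the uniformization step --- one element that is simultaneously small on \emph{all} of $K$ --- and here your proposal has a genuine gap, which you in fact diagnose yourself. In the halving step, shrinking $\tau(e-q_\mu)$ makes $q_\mu$ larger, hence makes the neighbourhood $V_\mu=\{\nu \in K:\nu(q_\mu)<c/2\}$ \emph{smaller}, so the size $m$ of the finite subcover is uncontrolled, and the only available bound $\tau(e-e')\le\sum_{j}\tau(e-q_{\mu_j})$ can exceed $\tau(e)$; nothing prevents $e'=\bigwedge_j q_{\mu_j}=0$. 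Your hope that ingredient (b) repairs this does not work: if you replace the $q_{\mu_j}$ by a common null projection $q$ for $\mu_1,\dots,\mu_m$ with small complement, the relevant open set $\{\nu:\nu(q)<c/2\}$ is defined by the \emph{new} test element $q$, and the subcover $\{V_{\mu_j}\}$, built from the old $q_{\mu_j}$, gives no information about it --- you are back where you started. This is a minimax obstruction (an $\inf_x\sup_\mu$ versus $\sup_\mu\inf_x$ exchange), and lattice operations on projections destroy the convexity needed to close it.

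The paper resolves exactly this point by a duality argument on the \emph{convex} set $\Phi_\epsilon=\{0\le x\le p:\tau_p(x)\ge 1-\epsilon\}$ of positive operators, not projections: assuming no $x\in\Phi_\epsilon$ satisfies $\mu(x)<\epsilon$ for all $\mu\in K$, it separates $h(\Phi_\epsilon)$ from $\{f\in C(K):f<\epsilon\}$ by Hahn--Banach in $C(K)$, represents the separating functional by a probability measure $\lambda$ on $K$ via Riesz, and takes the barycenter $\nu=\int_K\mu\,d\lambda(\mu)\in K$ using \cite[Theorem 3.27]{rudin1991}; then $\nu(x)>\epsilon/2$ for all $x\in\Phi_\epsilon$ contradicts the pointwise fact that $\nu$, being in $K$, admits $x_\nu\in\Phi_\epsilon$ with $\nu(x_\nu)<\epsilon/2$. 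Only after this uniform step does the paper pass to projections, by spectral cutting $q_n:=\chi_{[1/2,1)}(x_{\epsilon/2^n})$, which yields $\mu(q_n)<\epsilon/2^{n-1}$ and $\tau_p(p-q_n)\le\epsilon/2^{n-1}$, whence $q=\bigwedge_n q_n$ satisfies $\tau_p(p-q)\le 2\epsilon<1$ (so $q\ne0$) and $\mu(q)=0$ for all $\mu\in K$. So your skeleton (single-functional smallness, then uniformization, then a trace-controlled infimum of projections) has the right shape, and your closing tail matches the paper's, but the missing quantitative step is not mere bookkeeping: it genuinely requires relaxing from projections to a convex set of positive contractions and running a separation/barycenter argument, which a covering-by-neighbourhoods scheme cannot reproduce.
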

	
	
	\begin{proof}
		Let $f \in \CP_0(M_{r})$.
		Then consider a $p \in \CP_0(M)$ with $p \leq f$ such that $\tau(p)< \infty$. Define a faithful, normal state $\tau_p$ on $M$ by $\tau_p(x)= \frac{\tau(pxp)}{\tau(p)}$. Note that $\tau_p(p)= \tau_p(1)=1$. Let $0 < \epsilon < \frac{1}{2}$ and define $\Phi_\epsilon:= \{0 \leq x \leq p: \tau_p(x) \geq 1-\epsilon\}$.  If $\mu(p)=0$ for all $\mu \in K$, then we are done.
		
		Let $\mu \in K$. Since $\mu$ is singular there exists a sequence of projections $\{p_n\}$ in $M$ such that $p_n \uparrow p$ and $\mu(p_n)=0$. Choose $n \in \N$ such that $\tau_p(p_n) > 1- \epsilon$. Hence, we conclude that
		\begin{equation}\label{main lem-semifinite2 eq1}
			\text{for all } \mu \in K \text{ there exists } x_\mu \in \Phi_\epsilon \text{ such that } \mu(x_\mu) < \epsilon/2.
		\end{equation}
		Let $C(K)$ be the space of all scalar-valued continuous functions on $K$, where $K$ is equipped with the weak* topology induced from $M^*$, and define a linear map $h: \Phi_\epsilon \to C(K)$ by $h(x):= h_x$, where $h_x(\mu)= \mu(x)$ for all $x \in \Phi_\epsilon$ and $\mu \in K$. Furthermore, consider the following set
		\begin{align*}
			\Psi:= \{f \in C(K): f < \epsilon\}.
		\end{align*}
		
		We first claim that there exists $x_\epsilon \in \Phi_\epsilon$ such that $\mu(x_\epsilon) < \epsilon$ for all $\mu \in K$. For that it is enough to show that $\Psi \cap h(\Phi_\epsilon) \neq \emptyset$. 
		
		If the set is empty, then we can invoke Hahn-Banach separation theorem to obtain a bounded real linear functional $\Lambda$ on $C(K)$ and $a \in \R$ such that 
		\begin{align*}
			\Lambda(h_x) \geq a > \Lambda(f) \text{ for all } f \in \Psi \text{ and } x \in \Phi_\epsilon.
		\end{align*}
		Now by Riesz representation theorem we obtain a unique regular signed Borel measure $\lambda$ on $K$ such that 
		\begin{align*}
			\Lambda(f)= \int_K f d \lambda \text{ for all } f \in C(K).
		\end{align*}
		We claim that $\Lambda$ is a positive linear functional. Suppose $f \in C(K)$ such that $0< f \leq \epsilon$ but $\Lambda(f)<0$. Then for all $n \in \N$, $(-n) f \in \Psi$. Consequently, it follows that $-n \Lambda(f) < a$ for all $n \in \N$, which is a contradiction. Hence $\lambda$ can be assumed to be a probability measure.  Also since the constant function $\frac{\epsilon}{2} \in \Psi$, we get
		\begin{equation}\label{main lem-semifinite2 eq2}
			\Lambda(h_x)= \int_K \mu(x) d \lambda(\mu) > \epsilon/2 \text{ for all } x \in \Phi_\epsilon.
		\end{equation}
		Now observe that $K \subset M^*$ and consider the barycenter $\nu$ of $\lambda$ in $K$, which is defined by the integral $\nu:= \int_K \mu d \lambda(\mu)$ in the sense:
		\begin{align*}
			\Gamma(\nu)= \int_K \Gamma(\mu) d \lambda(\mu), ~ \Gamma \in (M^*)^*.
		\end{align*}
		Since $K$ is a compact, convex set, it follows from \cite[Theorem 3.27]{rudin1991} that the integral $\nu:= \int_K \mu d \lambda(\mu)$ exists and moreover, $\nu \in K$. Therefore, by eq. \ref{main lem-semifinite2 eq2} we obtain
		\begin{align*}
			\nu(x) = \Lambda(h_x) > \epsilon/2 \text{ for all } x \in \Phi_\epsilon,
		\end{align*}
		which is a contradiction to eq. \ref{main lem-semifinite2 eq1}. Hence, $\Psi \cap h(\Phi_\epsilon) \neq \emptyset$. Thus, there exists a  $ x_\epsilon \in \Phi_\epsilon $ such that 
		\begin{enumerate}
			\item $\tau_p(x_\epsilon)  \geq 1- \epsilon $ and 
			\item $\mu(x_\epsilon)    < \epsilon ~~$ for all $ \mu \in K$. 
		\end{enumerate}
		Now consider 	$q_\epsilon = \chi_{[\frac{1}{2},1)}(x_\epsilon ) $ and since $ x_\epsilon \leq p$, so we have $ q_\epsilon \in \CP(pMp)$. We further, note that 
		\begin{enumerate}
			\item $q_\epsilon  \leq 2 x_\epsilon$, which implies  $\mu(q_\epsilon)\leq 2 \mu( x_\epsilon)  < 2\epsilon$  and 
			\item $p - q_\epsilon \leq 2 (p - x_\epsilon)$, which implies $\tau_p( p-q_\epsilon) \leq 2\tau_p(p-x_\epsilon) \leq 2 \epsilon$. 
		\end{enumerate}
		
		Thus, for $  \frac{\epsilon}{2^n}$, find $ q_{\frac{\epsilon}{2^n} } =  q_n \in \CP_0(pMp) $ such that 
		\begin{enumerate}
			\item $\mu(q_n )    <   \frac{2\epsilon}{2^{n}} = \frac{\epsilon}{2^{n-1}} ~~$ for all $ \mu \in K$ and 
			\item  $\tau_p( p-q_n) \leq \frac{\epsilon}{2^{n-1}} $.  
		\end{enumerate}
		Now consider the projection  $q := \wedge_{n \geq 1} q_n$. Observe that, 
		\begin{align*}
			\tau_p(p-q) &\leq \sum_{n \geq 1} \tau_p(p-q_n)\\
			&= \sum_{k= 1}^\infty  \frac{\epsilon}{2^{n-1}}  = 2 \epsilon < 1, \text{ as }   \epsilon < \frac{1}{2}.
		\end{align*}
		This, shows that $  q \neq 0$ and further note that  	 
		$$\mu(q) \leq \frac{\epsilon}{2^{n-1}} \text{ for all } n \in \N \text{ and }\forall~ \mu \in K.	$$
		Hence, $ 	\mu(q) = 0 \text{ for all } \mu \in K$. 
	\end{proof}

	\begin{prop}
		Let $K$ be a subset of $(M^*_1)_+$ containing singular linear functionals on a von Nuemann algebra $M$. Then the follwoing are equivalent.
		\begin{enumerate}
			\item $K$ is weak* closed.
			\item For every $p \in \CP_0(M)$, there exists $q \in \CP_0(M)$ with $q \leq p$ such that $\mu(q)=0$ for all $\mu \in K$.
		\end{enumerate}
	\end{prop}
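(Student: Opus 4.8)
The plan is to prove the two implications separately. The forward direction (1)$\Rightarrow$(2) will reduce immediately to Lemma~\ref{main lem-semifinite2}, while the reverse direction (2)$\Rightarrow$(1) will be handled through Takesaki's characterisation of singular functionals (Theorem~\ref{defining property of singular functional}) together with the weak*-continuity of evaluation at a fixed projection.

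For (1)$\Rightarrow$(2), the first step is to upgrade ``weak*-closed'' to ``weak*-compact''. Since $M^*_1$ is weak*-compact by Banach--Alaoglu and the positive cone $M^*_+=\bigcap_{x\in M_+}\{\mu:\mu(x)\ge 0\}$ is weak*-closed, the set $(M^*_1)_+$ is weak*-compact; hence any weak*-closed $K\subseteq (M^*_1)_+$ is weak*-compact. By hypothesis each $\mu\in K$ is singular, so taking $r=1$ (whence $M_r=M$ and $\mu_{\upharpoonleft M_r}=\mu$) the hypotheses of Lemma~\ref{main lem-semifinite2} are satisfied. Applying that lemma to an arbitrary $f=p\in\CP_0(M)$ yields $q\in\CP_0(M)$ with $q\le p$ and $\mu(q)=0$ for all $\mu\in K$, which is exactly statement (2). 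This direction therefore costs essentially nothing beyond the lemma.

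For (2)$\Rightarrow$(1), the strategy is to show that no element obtained as a weak*-limit of $K$ can carry a nonzero normal part. I would take a net $\{\mu_\alpha\}\subseteq K$ with weak*-limit $\nu$, fix an arbitrary $e\in\CP_0(M)$, and apply (2) with $p=e$ to produce $q\in\CP_0(M)$, $q\le e$, with $\mu(q)=0$ for every $\mu\in K$. Since evaluation at the fixed element $q$ is weak*-continuous, $\nu(q)=\lim_\alpha\mu_\alpha(q)=0$. As $e$ was arbitrary, Theorem~\ref{defining property of singular functional} forces $\nu$ to be singular: were the normal part $\nu_n$ nonzero with support $e_0$, then every nonzero $q\le e_0$ would satisfy $\nu(q)\ge\nu_n(q)>0$ by faithfulness of $\nu_n$ on $e_0Me_0$, contradicting $\nu(q)=0$. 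Thus the projection condition (2) is precisely what prevents a normal component from appearing in the weak*-closure of $K$.

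The step I expect to be the main obstacle is exactly this reverse implication. The Takesaki argument above cleanly establishes that every weak*-limit point of $K$ is singular, i.e.\ that the weak*-closure of $K$ stays inside the singular functionals; the delicate point is to pass from this to the literal weak*-closedness of $K$. I would therefore scrutinise whether (2) is being used to conclude $K=\overline{K}^{w*}$ or only the weaker assertion that $\overline{K}^{w*}$ consists of singular functionals, and make sure the standing description of $K$ (that its members are singular, and that it is the relevant set of such functionals) is invoked at precisely this point so that singularity of the limit places the limit back in $K$.
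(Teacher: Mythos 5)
Your proposal is correct and follows essentially the same route as the paper: for (1)$\Rightarrow$(2) the paper likewise upgrades weak*-closedness to weak*-compactness via Banach--Alaoglu and invokes Lemma~\ref{main lem-semifinite2} (your choice $r=1$ is exactly how it is applied there), and for (2)$\Rightarrow$(1) it evaluates a weak*-convergent family from $K$ at the projection furnished by (2) and concludes singularity of the limit via Theorem~\ref{defining property of singular functional}. The ``delicate point'' you flag is genuinely present in the paper's own proof as well: it too stops at ``which implies $\mu$ is singular'' (and moreover uses sequences, though the weak* topology on bounded subsets of $M^*$ need not be metrizable), so literal weak*-closedness of $K$ requires reading the hypothesis as saying $K$ consists of \emph{all} singular functionals of the relevant class --- precisely the repair you propose, and the reading under which the proposition is actually used (cf.\ Lemma~\ref{req cpt set}).
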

	
	\begin{proof}
		$(1) \Rightarrow (2): $ Since $K$ is weak* closed, we have $K$ is weak* compact. Hence by Lemma \ref{main lem-semifinite2} one can find a sequence of projections in $M$ satisfying the required properties.
		
		$(2) \Rightarrow (1): $ Let $\{\mu_n\}$ be a sequence in $K$ such that $\mu_n \xrightarrow{w*} \mu$ for some $\mu \in (M^*_1)_+$. Now consider $p \in \CP_0(M)$. Then by hypothesis there exists $p_m \in \CP_0(M)$ such that $p_m \leq p$ and $\mu(p_m)=0$ for all $\mu \in K$. In particular, $\mu_n(p_m)=0$ for all $n \in \N$. Therefore, $\mu(p_m)=0$, which implies $\mu$ is singular.
	\end{proof}

	\begin{lem}\label{req cpt set}
	Let $(M, G, \alpha)$ be a non-commutative dynamical system	with a f.n state $\varphi \in M_*$.  Suppose  $e \in \CP_0(M)$ such that 
		if $\nu$ is any $G$-invariant normal state  on $M$, then $s(\nu) \leq e$. Then the set
		\begin{align*}
			K_e:= \{\mu \in (M^*_1)_+ : \mu \text{ is $G$-invariant and } \mu_{\upharpoonleft M_{1-e}} \text{ is singular} \}
		\end{align*}
		is weak*-closed.
	\end{lem}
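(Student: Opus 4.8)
The set $(M^*_1)_+$ is weak*-compact by Banach--Alaoglu together with the fact that positivity is a weak*-closed condition, so it suffices to prove that $K_e$ is weak*-closed inside it. The defining conditions of $K_e$ are of two kinds: $G$-invariance and singularity of the restriction to $M_{1-e}$. The first is immediately weak*-closed, since
\[
\{\mu \in (M^*_1)_+ : \mu \text{ is } G\text{-invariant}\} = \bigcap_{g \in G,\, x \in M} \{\mu : \mu(\alpha_g(x)) - \mu(x) = 0\},
\]
an intersection of weak*-closed sets, each functional $\mu \mapsto \mu(\alpha_g(x)) - \mu(x)$ being weak*-continuous. The delicate point is the singularity condition: a weak*-limit of singular functionals need not be singular, so this cannot be handled by topology alone. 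The plan is to show that, under the standing hypothesis on $e$, singularity of $\mu_{\upharpoonleft M_{1-e}}$ is \emph{automatic} for every $G$-invariant $\mu \in (M^*_1)_+$; this identifies $K_e$ with the weak*-closed set of $G$-invariant members of $(M^*_1)_+$ and finishes the proof.

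To carry this out, fix a $G$-invariant $\mu \in (M^*_1)_+$ and write $\mu = \mu_n + \mu_s$ for its Takesaki decomposition (Theorem \ref{takesaki decomposition}) into normal and singular parts. First I would show that $\mu_n$ is again $G$-invariant: decomposing $\mu_s \circ \alpha_g$ into normal and singular parts and matching components in $\mu_n + \mu_s = \mu_n \circ \alpha_g + (\mu_s \circ \alpha_g)_n + (\mu_s \circ \alpha_g)_s$ by uniqueness, the hypothesis $\alpha_g(1) \le 1$ (condition (U)) forces $(\mu_s \circ \alpha_g)_n = 0$ exactly as in the proof of Proposition \ref{existence of inv normal state}, whence $\mu_n \circ \alpha_g = \mu_n$ for all $g \in G$. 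Here I use that each $\alpha_g$ is normal, so that $\mu_n \circ \alpha_g \in M_*$.

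If $\mu_n \ne 0$, then $\mu_n/\mu_n(1)$ is a $G$-invariant normal state, so the hypothesis gives $s(\mu_n) \le e$; if $\mu_n = 0$ then $\mu = \mu_s$ is already singular and there is nothing more to do. In the first case, writing $p := s(\mu_n) \le e$ and using $\mu_n(\cdot) = \mu_n(p \,\cdot\, p)$, I get $\mu_n(x) = \mu_n(pxp) = 0$ for every $x \in M_{1-e} = (1-e)M(1-e)$, because $p(1-e) = 0$. Hence $\mu_{\upharpoonleft M_{1-e}} = (\mu_s)_{\upharpoonleft M_{1-e}}$. Finally, the restriction of a singular functional to the corner $M_{1-e}$ is again singular: given any nonzero projection $f \in M_{1-e}$, viewing $f \le 1-e$ as a projection of $M$ and applying the characterization of singularity (Theorem \ref{defining property of singular functional}) to $\mu_s$ produces a nonzero subprojection $f' \le f$ in $M$, hence in $M_{1-e}$, with $\mu_s(f') = 0$. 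Thus $\mu_{\upharpoonleft M_{1-e}}$ is singular for every $G$-invariant $\mu$, giving $K_e = \{\mu \in (M^*_1)_+ : \mu \text{ is } G\text{-invariant}\}$, which is weak*-closed.

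I expect the main obstacle to be the conceptual one flagged above --- that singularity is not a weak*-closed property in general --- and the resolution is precisely the reduction to $G$-invariant functionals, where the maximality property of $e$ together with the invariance of the normal part eliminates any normal contribution on the corner $M_{1-e}$.
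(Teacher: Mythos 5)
Your proof is correct, and its core engine is the same as the paper's: the Takesaki decomposition $\mu=\mu_n+\mu_s$, the invariance of the normal part forced by condition (U) exactly as in the proof of Proposition \ref{existence of inv normal state}, the maximality hypothesis giving $s(\mu_n)\leq e$, and hence $\mu_{\upharpoonleft M_{1-e}}=(\mu_s)_{\upharpoonleft M_{1-e}}$. Where you differ is in the global structure. The paper takes a weak*-convergent sequence $\{\mu_m\}\subseteq K_e$, notes that the limit $\mu$ is $G$-invariant, and runs the decomposition argument on that single limit functional; you instead run it on an arbitrary $G$-invariant $\mu\in(M^*_1)_+$ and conclude the strictly stronger statement that $K_e$ coincides with the set of all $G$-invariant elements of $(M^*_1)_+$, whose weak*-closedness is immediate as an intersection of weak*-closed hyperplane conditions. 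This repackaging buys two things: it makes explicit that the singularity requirement in the definition of $K_e$ is redundant under the hypothesis on $e$, and it is insensitive to the sequences-versus-nets issue --- the paper checks closedness only along sequences, which certifies weak*-closedness of a bounded set only when $M_*$ is separable (harmless here, since the paper assumes $\CH$ separable, but your argument needs no such reduction). You are also more careful on two points the paper leaves implicit: that $\mu_n\circ\alpha_g$ is again normal (which requires normality of each $\alpha_g$; this holds in the paper's setting, where the relevant actions on $M$ arise as adjoints of actions on the predual), and that the restriction of a singular functional to the corner $M_{1-e}$ remains singular, which you correctly extract from the characterization in Theorem \ref{defining property of singular functional} by passing to subprojections $f'\leq f\leq 1-e$.
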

	
	\begin{proof}
		Clearly the set $K_e$ is non-empty. Let $\{\mu_m\}$ be a sequence in $K_e$ such that $\mu_m \xrightarrow{w^*} \mu$ for some $\mu \in (M^*_1)_+$. Since $\mu_m$ is $G$-invariant for all $m \in \N$, we have $\mu$ is $G$-invariant. Write $\mu= \mu_n + \mu_s$ according to \cite[Theorem 3]{Takesaki1958} and observe that $\mu_n$ is $G$-invariant (see proof of Proposition \ref{existence of inv normal state}).
		
		Now by hypothesis, $s(\mu_n)\leq e$. Therefore, $(\mu_n)_{\upharpoonleft M_{1-e}}=0$. Hence, we obtain  $\mu_{\upharpoonleft M_{1-e}}=(\mu_s)_{\upharpoonleft M_{1-e}}$, which proves the result.
	\end{proof}

	Following Lemma establishes that the support of a weakly wandering operator and the support of a $G$-invariant state are orthogonal. 
\begin{lem}\label{support of state and support of ww}
		Suppose   $x_0 \in M_+$ is any weakly wandering operator and $\nu$ be a $G$-invariant  normal state on $M$, then $s(\nu) \perp s(x_0)$.
	\end{lem}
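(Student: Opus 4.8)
The plan is to show first that any $G$-invariant normal state must annihilate every weakly wandering operator, and then to upgrade this to orthogonality of supports by exploiting the faithfulness of $\nu$ on the corner cut out by its own support projection.

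First I would feed $x_0$ through the averaging operator and use invariance. Since $\nu$ is $G$-invariant, $\nu(\alpha_g(x_0)) = \nu(x_0)$ for every $g \in G$, and hence, by the integral formula for $\varphi(A_n(x))$ recorded after \eqref{Average},
\begin{align*}
\nu(A_n(x_0)) = \frac{1}{m(K_n)} \int_{K_n} \nu(\alpha_g(x_0))\, dm(g) = \nu(x_0) \quad \text{for all } n \in \N.
\end{align*}
On the other hand, because $x_0$ is weakly wandering (Definition \ref{weakly wandering defn}), we have $\norm{A_n x_0} \to 0$, so $\abs{\nu(A_n x_0)} \leq \norm{A_n x_0} \to 0$. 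Comparing the two lines forces $\nu(x_0) = 0$.

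Next I would convert the scalar identity $\nu(x_0)=0$ into an operator identity. Put $e := s(\nu)$. Using the support facts recalled in \S 2 — namely that $\nu(y) = \nu(eye)$ for every $y \in M$ and that $\nu$ restricts to a \emph{faithful} positive normal functional on $eMe$ — I deduce $\nu(e x_0 e) = \nu(x_0) = 0$ with $e x_0 e \in (eMe)_+$, whence $e x_0 e = 0$. Factoring $x_0 = x_0^{1/2} x_0^{1/2}$ gives $(x_0^{1/2} e)^*(x_0^{1/2} e) = e x_0 e = 0$, so $x_0^{1/2} e = 0$ and therefore $x_0 e = x_0^{1/2}(x_0^{1/2} e) = 0$; by self-adjointness of $x_0$ this also yields $e x_0 = 0$.

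Finally, from $e x_0 = 0$ we get $(1-e) x_0 = x_0$, and since $s(x_0)$ is by definition the smallest projection $q$ with $q x_0 = x_0$, minimality gives $s(x_0) \leq 1-e$, i.e. $s(x_0)\, s(\nu) = 0$, which is exactly $s(x_0) \perp s(\nu)$. The only step carrying genuine content is the passage from $\nu(x_0)=0$ to $x_0 e = 0$: everything hinges on the faithfulness of $\nu$ on the reduced algebra $eMe$, without which $\nu(x_0)=0$ would merely place the support of $x_0$ inside the kernel of $\nu$ rather than orthogonal to $s(\nu)$. The remaining manipulations are routine bookkeeping with the invariance identity and the Følner-free averaging formula.
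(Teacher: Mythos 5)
Your proof is correct and takes essentially the same route as the paper: the heart in both cases is the invariance identity $\nu(A_n(x_0))=\nu(x_0)$ combined with $\norm{A_n(x_0)}\to 0$, forcing $\nu(x_0)=0$. The only (immaterial) difference is the final bookkeeping step: the paper passes from $\nu(x_0)=0$ to $\nu(s(x_0))=0$ via normality (using $\chi_{[\epsilon,\infty)}(x_0)\leq \epsilon^{-1}x_0$) and then invokes the definition of the support of $\nu$, whereas you use faithfulness of $\nu$ on the corner $s(\nu)Ms(\nu)$ to get $s(\nu)x_0\,s(\nu)=0$ and hence $x_0\,s(\nu)=0$ --- both are valid one-line conclusions from the standard support facts recalled in \S 2.
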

	
	\begin{proof}
		Since $x_0 \in M_+$ is a weakly wandering operator, we have  $	\lim_{n \to \infty} \norm{A_n(x_0)} =0$. Since $\nu$ is $G$-invariant,  hence, for all $n \in \N$, we have 
		\begin{align}\label{invariant-convergence}
			\nu(x_0)=  \frac{1}{m(K_n)} \int_{K_n} \nu(\alpha_g(x_0)) dm(g) =\nu(A_n(x_0))
			\leq \norm{ A_n(x_0) }  \xrightarrow{n \rightarrow \infty} 0.
		\end{align}
		So, $\nu(x_0)= 0$, which implies $\nu(s(x_0))= 0$. Therefore,  $s(\nu) \perp  s(x_0)$.
	\end{proof}

	\begin{thm}\label{weakly-wandering}
		Let $(M, G, \alpha)$ be a non-commutative dynamical system	with a f.n state $\varphi \in M_*$ and $e \in M$ be a non-zero projection. Then the following statements are equivalent.
		\begin{enumerate}
			\item There exists a $G$-invariant normal state $\rho$ on $M$ with $s(\rho)= e$ such that,
			if $\nu$ is any $G$-invariant normal state  on $M$, then   $s(\nu) \leq e$.
			\item There is a weakly wandering operator $x_0 \in M_+$ with support $s(x_0)= 1 - e$ such that,  if $x\in M_+$ is any   weakly wandering operator, then $s(x) \leq 1-e$.
		\end{enumerate}
	\end{thm}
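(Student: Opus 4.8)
The substantive direction is $(1)\Rightarrow(2)$, and the plan is to manufacture the maximal weakly wandering operator from the null projections produced by Lemma \ref{main lem-semifinite2}. Assuming (1), I would first observe that every $G$-invariant $\mu\in(M^*_1)_+$ already restricts singularly to the corner $M_{1-e}$: writing $\mu=\mu_n+\mu_s$ by Theorem \ref{takesaki decomposition}, the normal part $\mu_n$ is again $G$-invariant (as in the proof of Proposition \ref{existence of inv normal state}), so by hypothesis $s(\mu_n)\leq e$, whence $\mu_n|_{M_{1-e}}=0$ and $\mu|_{M_{1-e}}=\mu_s|_{M_{1-e}}$ is singular. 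Thus the set $K_e$ of Lemma \ref{req cpt set} is exactly the set of all $G$-invariant elements of $(M^*_1)_+$, and by that lemma it is weak*-closed, hence weak*-compact inside $(M^*_1)_+$. This puts us in position to apply Lemma \ref{main lem-semifinite2} with $r=1-e$ and $K=K_e$.

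The core production step reads: for any $f\in\CP_0(M_{1-e})$, Lemma \ref{main lem-semifinite2} yields $q\in\CP_0(M)$ with $q\leq f$, $\tau(q)<\infty$, and $\mu(q)=0$ for every $G$-invariant $\mu\in(M^*_1)_+$. I claim any such $q$ is weakly wandering. Following the route indicated after Theorem \ref{mean erg thm}, I would first show $\mu(A_n q)=A_n^*(\mu)(q)\to 0$ for all $\mu\in M^*$; decomposing into positive parts it suffices to treat $\mu\in(M^*_1)_+$. Any weak*-cluster point $\nu$ of $\{A_n^*\mu\}$ lies in $(M^*_1)_+$ and is $G$-invariant, the invariance being the Følner estimate $\abs{\alpha_h^*A_n^*\mu(x)-A_n^*\mu(x)}\leq\norm{x}\,m(K_nh\Delta K_n)/m(K_n)\to0$ together with right invariance of $m$; hence $\nu\in K_e$ and $\nu(q)=0$, so the bounded scalar sequence $A_n^*\mu(q)$ has $0$ as its only cluster value and converges to $0$. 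Therefore $A_n q\to0$ in $\sigma(M,M^*)$ with limit $0\in M^G$, and the implication $(2)\Rightarrow(1)$ in the proof of Theorem \ref{mean erg thm}, applied to the single element $q$, upgrades this to $\norm{A_n q}\to0$.

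With the production step in hand I would run an exhaustion argument. By Lemma \ref{support of state and support of ww} every weakly wandering operator has support orthogonal to $s(\rho)=e$, hence $\leq 1-e$; so the supremum $p_0$ of all supports of weakly wandering operators satisfies $p_0\leq 1-e$. If $p_0<1-e$, applying the production step with $f=1-e-p_0$ produces a nonzero weakly wandering projection $q'\leq 1-e-p_0$, forcing $q'\leq p_0$ and $q'\perp p_0$ simultaneously, a contradiction; hence $p_0=1-e$. Choosing a maximal orthogonal family of weakly wandering projections (countable since $\CH$ is separable) with supremum $1-e$, and forming $x_0=\sum_n c_n q_n$ with $c_n>0$ and $\sum_n c_n<\infty$, dominated convergence gives $\norm{A_n x_0}\leq\sum_k c_k\norm{A_n q_k}\to0$. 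Thus $x_0$ is weakly wandering with $s(x_0)=\sup_n q_n=1-e$, and maximality of its support is precisely the orthogonality statement of Lemma \ref{support of state and support of ww}. This proves (2).

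For $(2)\Rightarrow(1)$ I would avoid redoing the construction. The supports of $G$-invariant normal states are upward directed via convex combinations, so separability of $\CH$ yields a maximal invariant normal state $\rho$ with $s(\rho)=:e''$, and this $\rho$ satisfies statement (1) with $e''$ in place of $e$. Applying the already proven $(1)\Rightarrow(2)$ produces a maximal weakly wandering operator of support $1-e''$; comparing it with the maximal $x_0$ from hypothesis (2)—each support dominates the other by maximality—forces $1-e''=1-e$, so $s(\rho)=e$ and (1) holds. The main obstacle throughout is the production step of the second paragraph: converting the purely measure-theoretic output of Lemma \ref{main lem-semifinite2}, namely $\mu(q)=0$ for all invariant $\mu$, into the genuinely dynamical conclusion $\norm{A_n q}\to0$. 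This is exactly where the weak*-compactness of $K_e$, the Følner invariance of cluster points, and the weak-to-norm upgrade furnished by Theorem \ref{mean erg thm} all come together.
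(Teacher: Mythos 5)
Your proof of $(1)\Rightarrow(2)$ is correct and follows the paper's own skeleton: Lemma \ref{main lem-semifinite2} applied to the weak*-compact set $K_e$, $G$-invariance of weak* cluster points of $A_n^*\mu$ via the F\o lner estimate, the weak-to-norm upgrade through Theorem \ref{mean erg thm} (whose implications are indeed proved elementwise, so applying them to the single element $q$ with limit $0\in M^G$ is legitimate -- the paper does exactly this), and the exhaustion by a maximal orthogonal family of weakly wandering projections. Your version is in fact a genuine streamlining: by noting that under (1) \emph{every} invariant $\mu\in(M^*_1)_+$ lies in $K_e$ (its normal part is invariant, hence supported under $e$), you annihilate all cluster points of $A_n^*\theta$ in one stroke. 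The paper instead first manufactures $q\leq p$ with $\inf_n\varphi(A_n(q))=0$ via Proposition \ref{existence of inv normal state} to get $\mu(A_{n_k}(q))\to 0$ for normal $\mu$, and then splits each test functional as $\theta=\theta_n+\theta_s$, handling the two parts along nested subsequences; that preliminary step exists only to control $\theta_n$, which your argument handles for free since any invariant cluster point $\nu$ satisfies $\nu_n(q)=\nu_s(q)=0$. So this direction is sound, and mildly simpler than the original.

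The gap is in $(2)\Rightarrow(1)$: the sentence ``separability of $\CH$ yields a maximal invariant normal state $\rho$'' presupposes that the set of $G$-invariant normal states is nonempty, and hypothesis (2) does not hand you this. If no invariant normal state existed, your comparison argument never starts, and excluding that case is precisely where the paper spends the bulk of its $(2)\Rightarrow(1)$ proof: assuming no invariant normal state, every invariant $\theta\in(M^*_1)_+$ is singular (its normal part, being invariant and normal, must vanish), so all weak* cluster points of $A_n^*\theta$ land in the set $K_0$ of invariant singular functionals; Lemma \ref{main lem-semifinite2}, now applied with $r=1$ and $f=e$, yields $p\in\CP_0(M)$ with $p\leq e$ and $\mu(p)=0$ for all $\mu\in K_0$, and the same cluster-point argument plus Theorem \ref{mean erg thm} gives $\norm{A_n(p)}\to 0$ -- contradicting the maximality clause $s(x)\leq 1-e$ of hypothesis (2), since $0\neq p\leq e$. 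The repair is short and uses exactly your own production step (when no invariant normal state exists, \emph{all} invariant positive functionals restrict singularly to every corner, so the lemma applies on the full algebra), but as written your $(2)\Rightarrow(1)$ silently assumes the hardest ingredient of the conclusion. Once nonemptiness is secured, your directedness-of-supports construction of a maximal invariant state and the two-sided support comparison with $x_0$ are correct, and in fact coincide with the paper's closing move, which also takes an invariant normal state of maximal support and feeds $1-s(\mu)$ back into $(1)\Rightarrow(2)$.
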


	\begin{proof}
		$(1) \Rightarrow (2): $ Let $p \in \CP_0(M)$ be such that $p \leq 1-e$.
		We first claim that there exists a $q \in \CP_0(M)$ such that $q \leq p$ and $\inf_{n \in \N} \varphi(A_n(q))=0$. Indeed, if this is not true, then there exists $0 \neq p \leq 1-e$ such that $\inf_{n \in \N}$ $\varphi(A_n(q))>0$ for all $0 \neq q \leq p$, which implies that there exists a $G$-invariant normal state $\nu_\varphi$ on $M$ with the support projection $s(\nu_\varphi)\geq p$ (see Proposition \ref{existence of inv normal state}). Thus, $\inf_{n \in \N} \varphi(A_n(q))=0$ for some $ q \in \CP_0(M)$ with $q \leq p$. Therefore, we obtain a subsequence $(n_k)$ such that  $\lim_{k \rar \infty } \varphi(A_{n_k}(q))=0$, as $\varphi$ f.n state so $ A_{n_k}(q) \xrightarrow{ k \rar \infty } 0 $ in SOT, equivalently $\lim_{k \rar \infty } \mu(A_{n_k}(q))=0$ for all $\mu \in M_*$.\\\\
				Now consider the set
		\begin{align*}
			K_e:= \{\theta \in (M^*_1)_+ : \theta \text{ is $G$-invariant and } \theta_{\upharpoonleft M_{1-e}} \text{ is singular} \}.
		\end{align*}
		By Lemma \ref{req cpt set}, the set $K_e$ is weak*-compact. Then by Lemma \ref{main lem-semifinite2} there exists $0 \neq q' \leq q$ such that $\theta(q')=0$ for all $\theta \in K_e$.\\
		We wish to show that $\theta(A_{n_k}(q')) \to 0$ for all positive $\theta \in M^*_1$. 
		But as $ q' \leq q$, so for the time being we have $\lim_{k \rar \infty } \mu(A_{n_k}(q'))=0$ for all $\mu \in M_*$.

		Let $\theta \in M^*_1$ be positive and consider a subsequence $(m_l)$ of $(n_k)$. Then writing $\theta= \theta_n + \theta_s$ in accordance with \cite[Theorem 3]{Takesaki1958}, we obtain
		\begin{align*}
			A^*_{m_l}(\theta) = A^*_{m_l}(\theta_n) + A^*_{m_l}(\theta_s) \text{ for all } l \in \N.
		\end{align*}
		Hence, by Banach-Alaglou theorem we can find a subsequence $(t_r)$ of $(m_l)$ such that $A^*_{t_r}(\theta_s) \xrightarrow{w^*} \psi$, for some positive $\psi \in M^*_1$. Note that by F\o lner condition, $\psi$ is $G$-invariant. Then again writing $\psi= \psi_n + \psi_s$ according to \cite[Theorem 3]{Takesaki1958}, we observe that $\psi_n$ is $G$-invariant (see proof of Proposition \ref{existence of inv normal state}). But by hypothesis, $s(\psi_n) \leq e$. Hence, $\psi_n(q')=0$. Consequently, we derive that $\theta_n(A_{t_r}(q')) \to 0$  and $\theta_s(A_{t_r}(q')) \to \psi_s(q')$.
		
		Note that $\psi_s \in K_e	$.  Therefore, $\psi_s(q')=0$ and hence $\theta(A_{t_r}(q')) \to 0$. Consequently, $A_{n_k}(q') \to 0$ weakly. Now by Theorem \ref{mean erg thm}, we have $\norm{A_n(q')} \to 0$. Hence, $q'$ is a weakly wandering projection.
		
		Let, $\{ q_j \}_{j \in \Lambda}$ be the maximal family of mutually orthogonal weakly wandering projections in $M$ such that $q_j \leq 1-e$ for all $j \in \Lambda$. Since, $M$ is $\sigma$-finite, we may take $\Lambda= \N$.
		
		We claim that, $\bar{q}:= \displaystyle \sum_{j=1}^{\infty} q_j = 1-e$. We note that, $\bar{q} \leq 1-e$. Now if $\bar{q} \neq 1-e$, then by the same construction, we get a weakly wandering  non-zero subprojection of $1-e- \bar{q}$, which is a contradiction to the maximality of the family of projections $\{ q_j \}_{j \in \N}$.
		
		Now define, $x_0:= \displaystyle \sum_{j=1}^{\infty} \frac{1}{2^j} q_j \in M_+$. Since $\bar{q}= 1-e$, we have $s(x_0) = 1-e$. We claim that $x_0$ is a weakly wandering operator. Indeed, for all $n,m \in \N$, we have 
		\begin{align*}
			\norm{A_n (x_0)}
			& \leq \sum_{j=1}^{m} \frac{1}{2^j} \norm{ A_n q_j} + \frac{1}{2^{m}} \norm{A_n(\sum_{j=1}^{\infty} \frac{1}{2^j} q_{m+j})}\\
			& \leq \sum_{j=1}^{m}  \norm{ A_n q_j} + \frac{1}{2^{m}} \norm{\sum_{j=1}^{\infty} \frac{1}{2^j} q_{m+j}} \\
			& \leq \sum_{j=1}^{m}  \norm{ A_n q_j} + \frac{1}{2^{m}} \norm{\sum_{j=1}^{\infty} q_{m+j}} \\
			& \leq \sum_{j=1}^{m}  \norm{ A_n q_j} + \frac{1}{2^{m}}.
		\end{align*}
		Let $\epsilon>0$. We choose $m \in \N$ such that $\frac{1}{2^m} < \frac{\epsilon}{2}$. Since for all $j \in \{1,\ldots, m \}$, $q_{2,j}$ is a weakly wandering, there exists $N_j \in \N$ such that for all $n \geq N_j$, $\norm{A_n q_j}< \frac{\epsilon}{2^j}$.
		
		We choose, $N:= \{ N_1, \ldots, N_m, m \} \in \N$. Hence,  $\norm{A_n (x_0)} \leq \epsilon$ for all $n \geq N$. Therefore, $x_0 \in M_+$ is a weakly wandering operator with $s(x_0)= 1-e$.
		
		$(2) \Rightarrow (1): $ Suppose there is no $G$-invariant normal state on $M$. Let $\theta \in (M^*_1)_+$ and consider the sequence $\{A_{n_k}^*(\theta)\}$ in $M^*_1$. By Banach-Alaglou theorem, there is a subsequence $(m_l)$ of $(n_k)$ such that $A^*_{m_l}(\theta) \xrightarrow{w^*} \overline{\theta}$ for some $\overline{\theta} \in (M^*_1)_+$. Since $\overline{\theta}$ is $G$-invariant, its normal component is zero (since it is $G$-invariant). Hence we have $\overline{\theta}$ is singular.
		
		Consider the set
		\begin{align*}
			K_0:= \{\theta \in (M^*_1)_+ : \theta \text{ is $G$-invariant and singular}\}
		\end{align*}
		By Lemma \ref{req cpt set}, $K_0$ is weak*-closed. Hence by Lemma \ref{main lem-semifinite2}, there is $p \in \CP_0(M)$ with $p \leq e$ such that $\mu(p)=0$ for all $\mu \in K_0$. Since $\overline{\theta} \in K_0$ for all $\theta \in (M^*_1)_+$ we have $\overline{\theta}(p)=0$. Hence, $\theta(A_{m_l}(p)) \to 0$. Therefore, $A_{n_k}(p) \to 0$ weakly, and by Theorem \ref{mean erg thm}, we have $\norm{A_n(p)} \to 0$. Hence, $p$ is a weakly wandering projection, which is a contradiction. 
		
		Finally we conclude that there is a non-zero $G$-invariant normal state on $M$. By Lemma \ref{support of state and support of ww}, it follows that if $\nu$ is any $G$-invariant normal state on $M$, then its support $s(\nu) \leq e$.
		
		Let $\mu$ be a $G$-invariant, normal state with maximal possible support. Clearly, $s(\mu) \leq e$. If $s(\mu) \neq e$, then consider the projection $1-s(\mu)$ and observe that $(1) \Rightarrow (2)$ implies the existence of a weakly wandering operator $x \in M_+$ such that $s(x)= 1 - s(\mu) \geq 1-e$, which is a contradiction.
	\end{proof}
	
	\begin{rem}	
The current approach is fundamentally different from the previous method for finding weakly wandering operators. The conventional approaches (see \cite{Bik-Dip-neveu}) for finding weakly wandering operators was limited to group actions on finite von Neumann algebras.   
  Actually, earlier techniques, implicitly find wandering projection, i.e, it finds a $q \in \CP_0(M)$ and a sequence $ \{ g_1, g_2,\cdots  \} \subseteq G$ such that $ \alpha_{g_i}(q) \perp \alpha_{g_j}(q) $ for all $ i \neq j$. Using this it was shown that $$ \lim_{n \rar \infty } \norm{A_n(q)} = 0.$$ \\
In contrast, the current techniques first show  that there exits a $ q \in \CP_0(M)$ such that 
		$$ \lim_{n \rar \infty }\mu ( A_n(q)) = 0, \text{ for all } \mu \in M^*.$$
Then  Theorem \ref{mean erg thm} is employed to conclude that $\lim_{n \rar \infty } \norm{A_n(q)} = 0$. 
Furthermore, the current idea applies  to every amenable semigroup actions on any semifinite von Neumann algebra. Additionally, as we will also see later that  it can be used for amenable group actions on any von Neumann algebra. 
	\end{rem}

	\begin{thm}[Neveu Decomposition] \label{neveu decomposition}
		Let $M$ be a semifinite von Neumann algebra and $( M, G , \alpha) $ be a non-commutative dynamical system.  Then there exist two projections $e_1, e_2 \in M$ such that $e_1 + e_2 = 1$ and 
		\begin{enumerate}
			\item there exists a $G$-invariant normal state $\rho$ on $M$  with support $s(\rho) = e_1$ and
			\item there exists a weakly wandering operator $x_0 \in M$ with support $s(x_0)= e_2$.
		\end{enumerate}
		Further, $s(\rho)$ and $s(x_0)$ are  unique. 
	\end{thm}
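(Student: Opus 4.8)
The plan is to derive the decomposition almost entirely from Theorem \ref{weakly-wandering} together with Lemma \ref{support of state and support of ww}, so that the only genuinely new work is pinning down the canonical projection $e_1$ and checking it is forced. First I would produce the maximal invariant support. Let $\CS$ denote the collection of supports $s(\nu)$ ranging over all $G$-invariant normal states $\nu$ on $M$, and set $e_1 := \sup \CS$, with the convention $\sup \emptyset = 0$. Since $M$ acts on a separable Hilbert space it is $\sigma$-finite, so when $\CS \neq \emptyset$ the supremum $e_1$ is already the join of a countable subfamily $\{s(\nu_i)\}_{i \in \N}$. Putting $\rho := \sum_{i \geq 1} 2^{-i} \nu_i$ yields a $G$-invariant normal state: the series converges in norm in $M_*$ (hence $\rho$ is normal), each $\alpha_g^*$ is a contraction commuting with the sum so invariance passes to the limit, and the support of a countable sum of positive normal functionals is the join of the supports, giving $s(\rho) = \sup_i s(\nu_i) = e_1$.

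By construction every $G$-invariant normal state $\nu$ satisfies $s(\nu) \leq e_1$, so condition (1) of Theorem \ref{weakly-wandering} holds for $e = e_1$. That theorem then furnishes a weakly wandering operator $x_0 \in M_+$ with $s(x_0) = 1 - e_1$; setting $e_2 := 1 - e_1$ gives $e_1 + e_2 = 1$ and completes the construction in this case. In the degenerate case $\CS = \emptyset$, where no nonzero invariant normal state exists, I would set $e_1 = 0$, $e_2 = 1$ and re-run the weakly-wandering construction from the proof of Theorem \ref{weakly-wandering} directly with $e = 0$: for each $p \in \CP_0(M)$ Proposition \ref{existence of inv normal state} provides $q \leq p$ with $\inf_n \varphi(A_n(q)) = 0$ (otherwise an invariant normal state would appear), the set of $G$-invariant singular states in $(M^*_1)_+$ is weak*-compact by Lemma \ref{req cpt set}, Lemma \ref{main lem-semifinite2} supplies $q' \leq q$ annihilated by this set, and Theorem \ref{mean erg thm} upgrades the weak convergence $A_{n_k}(q') \to 0$ to $\norm{A_n(q')} \to 0$. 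Exhausting $1$ by a maximal orthogonal family of such weakly wandering projections and forming $x_0 = \sum_j 2^{-j} q_j$ then gives $s(x_0) = 1$.

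For uniqueness, suppose $e_1 + e_2 = 1$ is \emph{any} decomposition as in the statement, realized by an invariant normal state $\rho$ with $s(\rho) = e_1$ and a weakly wandering $x_0 \in M_+$ with $s(x_0) = e_2$. Lemma \ref{support of state and support of ww} shows that every $G$-invariant normal state $\nu$ satisfies $s(\nu) \perp s(x_0) = e_2$, hence $s(\nu) \leq e_1$; taking the supremum gives $\sup \CS \leq e_1$, while $s(\rho) = e_1 \in \CS$ gives $e_1 \leq \sup \CS$. Thus $e_1 = \sup \CS$ is the intrinsic maximal support of an invariant normal state, independent of the chosen decomposition, and therefore $e_1$, and a fortiori $e_2 = 1 - e_1$, are unique.

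The technical heart of the argument is imported wholesale from Theorem \ref{weakly-wandering}, which in turn rests on Lemma \ref{main lem-semifinite2} and the normal/singular decomposition of positive functionals; relative to those results the present theorem is essentially a packaging statement. Accordingly the only points requiring care are that the maximal invariant support is actually attained by a single normal state (this is exactly where $\sigma$-finiteness is indispensable) and the orthogonality step for uniqueness. I expect the main obstacle to be the degenerate case $\CS = \emptyset$: there Theorem \ref{weakly-wandering} cannot be quoted verbatim, since it presupposes a nonzero projection $e$, and one must instead reproduce its weakly-wandering construction with $e = 0$ to ensure the whole of $1$ is covered by the dissipative part.
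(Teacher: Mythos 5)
Your proposal is correct and follows essentially the same route as the paper, whose entire proof consists of setting $e_1$ equal to the maximal invariant support from Theorem \ref{weakly-wandering}, $e_2 = 1 - e_1$, and citing that theorem. You additionally make explicit three points the paper leaves implicit --- that the maximal invariant support is attained by a single normal state (via $\sigma$-finiteness and the countable convex combination $\sum_i 2^{-i}\nu_i$), that the degenerate case with no invariant normal state requires re-running the weakly-wandering construction of Theorem \ref{weakly-wandering} with $e = 0$ rather than quoting it verbatim, and that uniqueness follows from the orthogonality in Lemma \ref{support of state and support of ww} --- all of which are consistent with the paper's own machinery.
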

\begin{proof}
Let $ e_1= e $ be maximal projection of a $G$-invariant normal state on $M_*$  as in Theorem \ref{weakly-wandering} and suppose $e_2 = 1-e_1$. The rest of the proof essentially follows from Theorem \ref{weakly-wandering}.   
	
\end{proof}

\begin{rem}\label{L^1-Neveu}
	Let $ M$ be  a semifinite von Neumann algebra with f.n semifinite trace $\tau$. Then
	note that Neveu decomposition can be stated in terms of  an action of an amenable semigroup $G$ on $L^1(M, \tau)$. Let $ ( L^1(M, \tau), G, \gamma )$ be non-commutative dynamical system. 
	Then there exist two projections $e_1, e_2 \in M$ such that $e_1 + e_2 = 1$ and 
	\begin{enumerate}
		\item there exists a $Y \in L^1(M, \tau) $ such that $ \gamma_g(Y) = Y$ and  $s(Y) = e_1$. 
		\item there exists a weakly wandering operator $x_0 \in M$ with $ A_n(\gamma^*)(x_0) \rar 0$ in norm  and  support $s(x_0)= e_2$.
	\end{enumerate}
	Further, $s(Y)$ and $s(x_0)$ are  unique. \\
\end{rem}	

Now we discuss the invariance of the projections obtained in Neveu decomposition.  To be more precise, 
let $M$ be a semifinite von Neumann algebra with f.n semifinite trace $\tau$ and $ ( L^1(M, \tau), G, \gamma )$ be  a non-commutative dynamical system with  $ e_1$ and $e_2$ being   the projection obtained in Neveu decomposition (see Remark \ref{L^1-Neveu}). Then we like to discuss  whether $ s(\gamma_g(e_i)) \leq e_i$ for $i = 1, 2$ and for all $g \in G$.  
 We begin with the following definition. 
\begin{defn}
Let $ \gamma:  L^1(M, \tau) \rar L^1(M, \tau) $ be a  positive operator. Then $\gamma$ is called  Lamperti operator if $ e_1, e_2 \in L^1(M, \tau) \cap \CP_0(M)$ with $e_1 e_2 = 0$, then 
$ \gamma(e_1)   \gamma(e_2) = 0 $.
\end{defn}	
We consider the following set 
$$ \CL= \{  \gamma:  L^1(M, \tau) \rar L^1(M, \tau) \text{ positive Lamperti  contraction} \}.$$

The following result is straightforward and well known in the literature. For the completeness of this article, we include a proof. 
\begin{prop}\label{lamperti}
Let $ \gamma:  L^1(M, \tau) \rar L^1(M, \tau) $ be a  positive contraction. Then $\gamma$ is Lamperti operator if and only if  for $ a, b \in  L^1(M, \tau)_+ $ with $ ab =0$ then $ \gamma(a) \gamma(b) = 0$.	
\end{prop}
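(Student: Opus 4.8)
The ``only if'' direction is immediate: any pair of nonzero orthogonal projections in $L^1(M,\tau)$ is in particular a pair of positive $L^1$-elements with zero product, so the general statement specializes to the defining property of a Lamperti operator. The substance lies in the converse, so the plan is to assume $\gamma$ is Lamperti and fix $a,b \in L^1(M,\tau)_+$ with $ab=0$, aiming at $\gamma(a)\gamma(b)=0$.

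First I would record the relevant reductions. Since $a,b \geq 0$ and $ab=0$, the supports satisfy $s(a) \perp s(b)$ (the relation $ab=0$ forces $\range(b) \subseteq \Ker(a)$, hence $s(b) \leq 1 - s(a)$). These supports need not lie in $L^1(M,\tau)$, so they cannot be fed directly into the Lamperti hypothesis; instead I pass to finite-trace spectral projections. For $n \in \Z$ set $f_n := \chi_{[2^{-n},2^{-n+1})}(a)$ and $g_m := \chi_{[2^{-m},2^{-m+1})}(b)$. Chebyshev's inequality gives $\tau(f_n) \leq 2^{n}\tau(a) < \infty$ and similarly for $g_m$, so each nonzero $f_n$ and $g_m$ belongs to $L^1(M,\tau)\cap \CP_0(M)$. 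Moreover $f_n \leq s(a)$ and $g_m \leq s(b)$, so $f_n g_m = 0$, and the Lamperti property yields $\gamma(f_n)\gamma(g_m)=0$, i.e. $s(\gamma(f_n)) \perp s(\gamma(g_m))$ for all $n,m$.

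The next step is the support identity $s(\gamma(a)) = \sup_n s(\gamma(f_n))$ (and likewise for $b$). On each spectral band one has $2^{-n} f_n \leq a f_n \leq 2^{-n+1} f_n$, so positivity of $\gamma$ gives $2^{-n}\gamma(f_n) \leq \gamma(af_n) \leq 2^{-n+1}\gamma(f_n)$, whence $s(\gamma(af_n)) = s(\gamma(f_n))$. Since $a = \sum_n a f_n$ converges absolutely in $L^1(M,\tau)$ (the partial sums are $a\,\chi_{[2^{-N},2^{N+1})}(a)$ and $\sum_n \tau(af_n) = \tau(a) < \infty$) and $\gamma$ is bounded, $\gamma(a) = \sum_n \gamma(af_n)$ in $L^1(M,\tau)$. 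For a norm-convergent sum of positive $L^1$-elements the support of the sum equals the supremum of the supports of the summands: writing $P = \sup_n s(\gamma(af_n))$, the bound $\geq$ is clear, while $(1-P)\gamma(af_n)(1-P)=0$ for every $n$ forces $(1-P)\gamma(a)(1-P)=0$ and hence $s(\gamma(a)) \leq P$ by positivity. This yields $s(\gamma(a)) = \sup_n s(\gamma(f_n))$.

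Finally I would assemble the pieces. From $s(\gamma(f_n)) \perp s(\gamma(g_m))$ for all $n,m$ it follows, by taking suprema one variable at a time (each $s(\gamma(f_n)) \leq 1 - s(\gamma(g_m))$ gives $\sup_n s(\gamma(f_n)) \leq 1 - s(\gamma(g_m))$, and then the same over $m$), that $\sup_n s(\gamma(f_n)) \perp \sup_m s(\gamma(g_m))$, that is $s(\gamma(a)) \perp s(\gamma(b))$, and therefore $\gamma(a)\gamma(b)=0$. The only genuinely delicate points are the passage from the possibly infinite-trace supports $s(a),s(b)$ to finite-trace spectral projections and the support-of-a-sum lemma; once these are in place, the remainder is a routine assembly.
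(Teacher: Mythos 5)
Your proof is correct, and its skeleton coincides with the paper's: both directions reduce the case $ab=0$ to orthogonality of finite-trace spectral projections of $a$ and $b$, apply the Lamperti hypothesis to those projections, and transfer the conclusion back using positivity of $\gamma$. The difference is in the decomposition and in how the limit is taken. The paper approximates $a$ and $b$ by truncations $a_n, b_n \in M$ with $0 \le a_n \le a$, $\tau(s(a_n)) < \infty$ and $\norm{a_n - a}_1 \to 0$, obtains $\gamma(a_n)\gamma(b_n)=0$ from the one-sided sandwich $a_n \le \norm{a_n}\, s(a_n)$, and then passes to the limit in the product itself --- a step whose justification (that $L^1$-convergence implies convergence in measure and that multiplication is jointly continuous on $L^0(M,\tau)$ in the measure topology) the paper's closing sentence leaves implicit. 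You instead decompose dyadically into bands $f_n = \chi_{[2^{-n},2^{-n+1})}(a)$, prove the support identity $s(\gamma(a)) = \sup_n s(\gamma(f_n))$ via the two-sided bound $2^{-n}\gamma(f_n) \le \gamma(af_n) \le 2^{-n+1}\gamma(f_n)$ together with a supremum-of-supports lemma for norm-convergent sums of positive elements, and conclude $s(\gamma(a)) \perp s(\gamma(b))$ directly. Your route never takes a limit of products, so it sidesteps the measure-topology continuity of multiplication entirely; the price is the band-by-band bookkeeping and the support-of-a-sum lemma, both of which you justify adequately (the Chebyshev bound $\tau(f_n) \le 2^n \tau(a)$ correctly places each nonzero $f_n$ in $L^1(M,\tau) \cap \CP_0(M)$, as the Lamperti hypothesis requires). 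One cosmetic slip: what you label the ``only if'' direction (deducing the Lamperti property by specializing to projections) is, with the statement as written, the ``if'' direction; the substantive implication you then prove is the correct one.
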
	
\begin{proof}
Suppose $ a, b \in  L^1(M, \tau)_+ $ with $ ab =0$. Immediately, note that $s(a)s(b) = 0$.   Then the proof follows from the fact that there exists two sequences  $(a_n)$ and $ (b_n)$ in $M$ such that 
\begin{enumerate}
	\item $0 \leq a_n \leq a \text{ and } 0 \leq b_n \leq b$ such that  $\tau(s(a_n) ) < \infty $ and  $\tau(s(b_n) )  < \infty $  with 
	$$s(a_n) \nearrow s(a) \text{ and } s(b_n) \nearrow s(b) \text{ and }$$
	\item $\lim_{ n \rar \infty } \norm{ a_n -a }_1 = 0  \text{ and } \lim_{ n \rar \infty }  \norm{ b_n -b }_1 = 0 $.\\
\end{enumerate}
Indeed, note that $ a_n b_n = 0 $, so $ s(a_n) s(b_n) = 0$. This implies that $ \gamma(s(a_n)) \gamma(s(b_n)) = 0$. 
Hence, $\gamma(a_n) \gamma(b_n) = 0$ as $  a_n \leq \norm{ a_n} s(a_n) $ and $  b_n \leq \norm{ b_n} s(b_n) $. Then obtain that $\gamma(a) \gamma(b) =0$. 
\end{proof}

	\begin{prop}\label{inv-lamperti}
	Let $M$ be   a semifinite von Neumann algebra with a semifinite f.n trace $\tau$ and  $(M, G, \alpha )$   be a non-commutative dynamical system. Suppose $ e_1, e_2$ be the corresponding projection obtained in the Neveu decomposition, then we have the following
		$$ \alpha_g(e_2)  \leq e_2 \text{ and }   s( \alpha_g^*(e_1Xe_1)) \leq e_1  \text{ for all } X \in L^1(M, \tau), g 
		\in G.$$
Furthermore, if $ \alpha_g^* \in \CL $ for all $ g \in G$, then  
$ \alpha_g(e_1)  \leq e_1 	\text{ for all } g 
\in G.$
	\end{prop}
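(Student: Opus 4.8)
The plan is to derive all three assertions from a single structural fact coming from the first assertion: that each $\alpha_g$ leaves the ``dissipative corner'' $e_2 M e_2$ invariant. Throughout, write $\rho$ for the maximal $G$-invariant normal state with $s(\rho)=e_1$, so that $\rho(1-e_1)=0$ and $\rho(\,\cdot\,)=\rho(e_1\,\cdot\,e_1)$ is faithful on $e_1Me_1$, and recall that $\alpha_g^*$ denotes the predual of $\alpha_g$ on $M_*\cong L^1(M,\tau)$, characterised by $\tau(\alpha_g^*(Y)a)=\tau(Y\alpha_g(a))$ for $Y\in L^1(M,\tau)$, $a\in M$.

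First I would prove $\alpha_g(e_2)\le e_2$. Since $\rho$ is $G$-invariant, $\rho(\alpha_g(e_2))=\rho(e_2)=\rho(1-e_1)=0$; as $\alpha_g(e_2)\ge 0$ and $\rho$ is faithful on $e_1Me_1$, this forces $e_1\alpha_g(e_2)e_1=0$, whence $\alpha_g(e_2)e_1=0$ by the usual square-root argument. Thus $\alpha_g(e_2)=e_2\alpha_g(e_2)e_2\le e_2\alpha_g(1)e_2\le e_2$. The same positivity estimate applied to an arbitrary $0\le y\le\|y\|e_2$ in $(e_2Me_2)_+$ gives $0\le\alpha_g(y)\le\|y\|\alpha_g(e_2)\le\|y\|e_2$, so $\alpha_g(y)\in e_2Me_2$; by linearity $\alpha_g(e_2Me_2)\subseteq e_2Me_2$. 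This inclusion is the engine for the remaining parts.

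For the second assertion it suffices, by decomposing an element of $L^1$ into four positive pieces and using linearity, to treat $Z\in(e_1L^1(M,\tau)e_1)_+$ and to show $\alpha_g^*(Z)\in e_1L^1(M,\tau)e_1$. Here $\alpha_g^*(Z)\ge 0$, so it is enough to check $e_2\alpha_g^*(Z)e_2=0$. Testing against $a\in M$ and using the predual relation, $\tau(e_2\alpha_g^*(Z)e_2\,a)=\tau(\alpha_g^*(Z)\,e_2ae_2)=\tau(Z\,\alpha_g(e_2ae_2))$; since $Z=e_1Ze_1$ and $\alpha_g(e_2ae_2)\in e_2Me_2$ by the first part, this equals $\tau(Z\,e_1\alpha_g(e_2ae_2)e_1)=0$. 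As $M$ separates points of $L^1(M,\tau)$, we get $e_2\alpha_g^*(Z)e_2=0$, and positivity of $\alpha_g^*(Z)$ upgrades this to $s(\alpha_g^*(Z))\le e_1$. Linearity then yields $s(\alpha_g^*(e_1Xe_1))\le e_1$ for every $X\in L^1(M,\tau)$.

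For the third assertion, under the hypothesis $\alpha_g^*\in\CL$, I would use the $L^1$-representative $R\in L^1(M,\tau)_+$ of $\rho$, which satisfies $\alpha_g^*(R)=R$ and $s(R)=e_1$. For $W\in(e_2L^1(M,\tau)e_2)_+$ one has $RW=0$ (orthogonal supports), so the Lamperti property gives $R\,\alpha_g^*(W)=\alpha_g^*(R)\alpha_g^*(W)=0$; since $\alpha_g^*(W)\ge 0$ and $s(R)=e_1$, this forces $s(\alpha_g^*(W))\le e_2$, i.e. $\alpha_g^*(e_2L^1(M,\tau)e_2)\subseteq e_2L^1(M,\tau)e_2$. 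Dualising through the trace pairing, $\tau(W\alpha_g(a))=\tau(\alpha_g^*(W)a)=\tau(\alpha_g^*(W)e_2ae_2)=\tau(W\alpha_g(e_2ae_2))$ for all $W\in e_2L^1(M,\tau)e_2$; taking $a=e_1$ makes the right-hand side vanish, and since $e_2L^1(M,\tau)e_2$ separates points of $e_2Me_2$ I conclude $e_2\alpha_g(e_1)e_2=0$. With $\alpha_g(e_1)\ge 0$ this gives $\alpha_g(e_1)e_2=0$, hence $\alpha_g(e_1)=e_1\alpha_g(e_1)e_1\le e_1\alpha_g(1)e_1\le e_1$ (in fact $\alpha_g(e_1)=e_1$, by invariance of $\rho$).

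The main obstacle is the second assertion: for a merely positive (not $2$-positive) contraction $\alpha_g$ there is no Kadison--Schwarz inequality with which to control off-diagonal images such as $e_1\alpha_g(e_1ae_2)e_1$, so one cannot expect $\alpha_g$ to preserve $e_1Me_1$. The argument sidesteps this precisely because $Z=e_1Xe_1$ is two-sided in $e_1$: pairing against $e_2ae_2$ feeds $\alpha_g$ only elements of the corner $e_2Me_2$, where the first part applies. The analogous difficulty in the third part --- that the Lamperti hypothesis is a statement about $\alpha_g^*$ on $L^1(M,\tau)$ while the conclusion concerns $\alpha_g$ on $M$ --- is resolved by first establishing corner-invariance of $\alpha_g^*$ and then transporting it through the predual duality.
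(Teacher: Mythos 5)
Your argument is correct, and for the second and third assertions it is essentially the paper's own proof: like the paper, you reduce the second assertion to a trace--pairing computation against the $e_2$ corner using the first assertion, and for the third you apply the Lamperti hypothesis to the invariant density $R$ (the paper's $Y$) and a positive element of $e_2L^1(M,\tau)e_2$ with orthogonal support, then dualize back; your version even fills in the duality step (taking $a=e_1$ and separating by $e_2L^1(M,\tau)e_2$) that the paper compresses into the word ``equivalently''. The genuine divergence is in the first assertion. The paper proves $\alpha_g(e_2)\le e_2$ from the \emph{weakly wandering} side of the decomposition: it takes the weakly wandering operator $x_0$ with $s(x_0)=e_2$, extracts spectral projections $f_m\nearrow e_2$ with $\frac{1}{m}f_m\le x_0$, notes that each $\alpha_g(f_m)$ is again weakly wandering, and invokes the maximality of $e_2$ as the support of weakly wandering operators. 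You instead argue from the \emph{invariant-state} side: $\rho(\alpha_g(e_2))=\rho(e_2)=0$ together with faithfulness of $\rho$ on $e_1Me_1$ kills $e_1\alpha_g(e_2)e_1$, and the square-root argument plus $\alpha_g(1)\le 1$ finishes. Your route is shorter and bypasses the weakly wandering operator entirely; what the paper's route uses is only the maximality of the dissipative support rather than the state $\rho$, but since the Neveu decomposition supplies both halves, either ingredient is equally available at this point. A small further benefit of your approach is the explicit corner invariance $\alpha_g(e_2Me_2)\subseteq e_2Me_2$, which turns the second assertion into an exact vanishing statement rather than the paper's inequality $\tau(e_1Xe_1\alpha_g(e_2))\le\tau(e_1Xe_1e_2)$ (which, like your computation, tacitly assumes $X\ge 0$ and requires decomposing a general $X$ into positive parts). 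Your parenthetical that in fact $\alpha_g(e_1)=e_1$ under the Lamperti hypothesis is also correct, by invariance of $\rho$ and its faithfulness on $e_1Me_1$.
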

	
\begin{proof}	

To show  $\alpha_g(e_2)  \leq e_2 $, we recall that there exists a positive operator $x_0 \in M$ such that  $s(x_0)=e_2$ and it is  weakly wandering, i.e, $\norm{A_n(x_0)} \to 0$ as $n \to \infty$. We choose a sequence of projections $\{f_m\} \subseteq M$ such that 
\begin{align*}
f_m \nearrow s(x_0) \text{ and } \frac{1}{m} f_m \leq x_0 \text{ for all } m \in \N.
\end{align*}
Observe that $f_m$ is weakly wandering for all $m \in \N$. Hence, for all $g \in G$,  $\alpha_g(f_m)$ is also weakly wandering for all $m \in \N$. Since $e_2$ is the maximal support of the weakly wandering projection, so  we have $s(\alpha_g(f_m)) \leq e_2$ for all $m \in \N$. Therefore, $\alpha_g(f_m) \leq e_2$ for all $m \in \N$, which implies $\alpha_g(e_2) \leq e_2$, for all $g \in G$. \\
Then we note that 
\begin{align*}
	\tau( \alpha_g^*(e_1Xe_1) e_2) &= \tau( e_1Xe_1 \alpha_g(e_2) )\\
	&\leq \tau( e_1Xe_1 e_2 ) =0, \text{ as } \alpha_g(e_2) \leq e_2.
\end{align*}
Hence, $e_2 \alpha_g^*(e_1Xe_1)e_2 =0$, consequently, $s(\alpha_g^*(e_1Xe_1) ) \leq e_1$.

Now  assume that $ \alpha_g^* \in \CL $. Then we want to show that
$ \alpha_g(e_1) \leq e_1$, equivalently, $s(\alpha_g^*(e_2Xe_2)) \leq e_2$. Now suppose 
$\rho$ is a normal state with $s(\rho) = e_1$ with $ \rho\circ \alpha_g = \alpha_g$ for all $g \in G$. So, there exists $ Y \in L^1(M, \tau)$ such that $ \rho(x) = \tau(Y x) $ for all $ x \in M$ with $s(Y) = e_1$. Further, we have the following 
\begin{align*}
	\tau({\alpha_g}^*(X)y)= \tau(X\alpha_g(y)) \text{ for all }  X\in L^1(M, \tau) \text{ and } y \in M.
\end{align*}	
Thus, for all $x \in M$,  we observe that 
\begin{align*}
	\tau(Y x) = \rho(x)= \rho( \alpha_g(x)) = \tau(Y \alpha_g(x)) = \tau(\alpha_g^*(Y) x).
\end{align*} 
Hence, it follows that $ \alpha_g^*(Y) = Y $.  
As $ s(Y) = e_1 $, so $Y e_2Xe_2 =0$. Hence,  $\alpha_g^*(Y) \alpha_g^*(e_2Xe_2) =0$  for all $g \in G$. But we have $\alpha_g^*(Y) = Y$, therefore $Y \alpha_g^*(e_2Xe_2) = 0$. This implies $e_1 \alpha_g^*(e_2Xe_2)e_1  = 0$. 
\end{proof}

\begin{cor}\label{proj-inv}
Suppose $ (M, G, \alpha), e_1, e_2$ are  as in proposition \ref{lamperti} and assume that $ \alpha^*_g \in \CL$ for every $g \in G$. Then for all 
$ X \in L^1(M, \tau) $ and $g \in G$, we have $e_i \alpha_g^* (X) e_i =   \alpha_g^* ( e_i  X e_i ) 	$ for $i = 1, 2$. 
\end{cor}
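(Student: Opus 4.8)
The plan is to show that, under the Lamperti hypothesis, $\alpha_g^*$ respects the two-sided Peirce decomposition of $L^1(M,\tau)$ determined by $e_1,e_2$; the asserted identities then drop out. Writing $X=\sum_{j,k\in\{1,2\}}e_jXe_k$, the claim $e_i\alpha_g^*(X)e_i=\alpha_g^*(e_iXe_i)$ splits into blocks. Proposition \ref{inv-lamperti} gives, under the Lamperti assumption, $s(\alpha_g^*(e_jXe_j))\le e_j$ for \emph{both} $j=1,2$, so the diagonal blocks already behave correctly: $e_i\alpha_g^*(e_iXe_i)e_i=\alpha_g^*(e_iXe_i)$ and $e_i\alpha_g^*(e_jXe_j)e_i=0$ for $j\neq i$. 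Everything therefore reduces to the off-diagonal part, and it suffices to prove that $\alpha_g^*$ maps $e_1L^1(M,\tau)e_2$ (and symmetrically $e_2L^1(M,\tau)e_1$) into the off-diagonal corner, i.e. $e_i\alpha_g^*(Z)e_i=0$ for every $Z\in e_1L^1(M,\tau)e_2$ and $i=1,2$. (Via the duality $\tau(\alpha_g^*(X)y)=\tau(X\alpha_g(y))$ this is equivalent to $\alpha_g(e_iye_i)=e_i\alpha_g(y)e_i$ on $M$.)

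The engine will be a disjointness identity. Since $\alpha_g^*$ is a positive Lamperti contraction, for self-adjoint $h=h_+-h_-\in L^1(M,\tau)$ one has $\alpha_g^*(h_+)\alpha_g^*(h_-)=0$, whence $|\alpha_g^*(h)|=\alpha_g^*(h_+)+\alpha_g^*(h_-)=\alpha_g^*(|h|)$. By $L^1$-density (polar decomposition of $Z$ and a spectral approximation of $|Z|$ by finite-trace projections) it is enough to treat $Z=v$, a partial isometry with $p:=vv^*\le e_1$ and $q:=v^*v\le e_2$ of finite trace. I will set $W=v+v^*$ and $W'=i(v-v^*)$, both self-adjoint and off-diagonal, and record $W^2=W'^2=p+q=(p-q)^2$ together with $(p-q)W+W(p-q)=0=(p-q)W'+W'(p-q)$.

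The hard part — the step I expect to be the main obstacle — will be extracting enough rigidity from Lamperti to kill the diagonal of $\alpha_g^*(W)$. I would consider the family $G_\phi=\cos\phi\,(p-q)+\sin\phi\,W$; the relations above give $G_\phi^2=p+q$, hence $|G_\phi|=p+q$ for \emph{all} $\phi$. The disjointness identity then yields that $|\alpha_g^*(G_\phi)|=\alpha_g^*(p+q)=:A$ is independent of $\phi$, with $A=\alpha_g^*(p)+\alpha_g^*(q)=N_1+N_2$ block-diagonal, $N_1=\alpha_g^*(p)\ge0$ supported under $e_1$ and $N_2=\alpha_g^*(q)\ge0$ supported under $e_2$. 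Writing $R=\alpha_g^*(W)$ and $N=N_1-N_2$, expanding $\alpha_g^*(G_\phi)^2=A^2$ and matching the $\cos\phi\sin\phi$ coefficient forces $NR+RN=0$. Compressing this anticommutator to the $(1,1)$ corner gives $N_1R_{11}+R_{11}N_1=0$ with $N_1\ge0$ and $R_{11}=e_1Re_1$ self-adjoint; anticommuting with a positive operator forces $R_{11}$ to commute with $N_1^2$, hence with $N_1$, hence $N_1R_{11}=0$, so $R_{11}$ is supported under $e_1-s(N_1)$. Reading off the $(1,1)$ block of $R^2=A^2$, namely $R_{11}^2+R_{12}R_{12}^*=N_1^2$, and compressing by $e_1-s(N_1)$ gives $R_{11}^2+(\text{positive})=0$, so $R_{11}=0$; symmetrically $R_{22}=0$. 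Running the identical computation with $G'_\phi=\cos\phi\,(p-q)+\sin\phi\,W'$ shows $\alpha_g^*(W')$ is off-diagonal too, whence $\alpha_g^*(v)=\tfrac12(\alpha_g^*(W)-i\alpha_g^*(W'))$ is off-diagonal; density then gives off-diagonal preservation for all $Z$.

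I would stress that the Lamperti hypothesis is genuinely indispensable here: positivity together with $\alpha_g(e_i)\le e_i$ alone does \emph{not} preserve the corners (a direct $2\times2$ computation exhibits positive maps respecting $\alpha_g(e_i)\le e_i$ that mix the diagonal and off-diagonal blocks), so the disjointness identity and the rotation family $G_\phi$ are doing the essential work. Once off-diagonal preservation is established, the corollary follows by reassembling the blocks, giving $e_i\alpha_g^*(X)e_i=\alpha_g^*(e_iXe_i)$ for $i=1,2$.
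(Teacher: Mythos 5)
Your main argument is correct, but it takes a genuinely different and much heavier route than the paper for the off-diagonal blocks. Both proofs treat the diagonal blocks identically, via the support conditions of Proposition \ref{inv-lamperti}. For the off-diagonal part, however, the paper needs nothing beyond positivity and the two inclusions $\alpha_g(e_i)\le e_i$ ($i=1,2$) already supplied by Proposition \ref{inv-lamperti}: using the duality $\tau(\alpha_g^*(Z)y)=\tau(Z\alpha_g(y))$, for $y\in M_+$ one has $0\le \alpha_g(e_iye_i)\le \norm{y}\,\alpha_g(e_i)\le \norm{y}\,e_i$, hence $\alpha_g(e_iye_i)=e_i\alpha_g(e_iye_i)e_i$, and therefore
\begin{align*}
\tau\bigl(e_i\,\alpha_g^*(e_jXe_k)\,e_i\,y\bigr)=\tau\bigl(e_jXe_k\,\alpha_g(e_iye_i)\bigr)=0 \quad\text{whenever } (j,k)\neq (i,i),
\end{align*}
since $e_k\,e_i a e_i\,e_j=0$. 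This one-line computation (which is what the paper's items (2) and (3) implicitly rest on, and why the duality identity is quoted at the start of its proof) replaces your entire apparatus: the reduction by polar decomposition and spectral truncation to finite-trace partial isometries, the identity $|\alpha_g^*(h)|=\alpha_g^*(|h|)$, the rotation family $G_\phi$, and the anticommutator analysis. Your computations there do check out ($G_\phi^2=p+q$; the extraction of $N R+RN=0$; $N_1R_{11}+R_{11}N_1=0$ forcing $N_1R_{11}=0$; killing $R_{11}$ via the $(1,1)$ block of $R^2=A^2$), so what you lose is only economy; what the paper's route makes visible is that the Lamperti hypothesis is invoked exactly once, inside Proposition \ref{inv-lamperti}, to obtain $\alpha_g(e_1)\le e_1$.

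One assertion in your proposal is, however, false, and it is precisely the one motivating the heavy machinery: the claim that positivity together with $\alpha_g(e_i)\le e_i$ alone does not preserve the corners, with a purported $2\times 2$ counterexample. No such example exists: the displayed computation shows that any positive map satisfying both inclusions has a predual that annihilates all mixed compressions, i.e.\ fully respects the Peirce decomposition. Concretely, in $M_2(\C)$ with $e_i=E_{ii}$, write $e_1\alpha(E_{12}+E_{21})e_1=\lambda e_1$ with $\lambda\in\R$; applying $\alpha$ to the positive matrices $\begin{pmatrix} \epsilon & \mp 1 \\ \mp 1 & \epsilon^{-1}\end{pmatrix}$ and compressing by $e_1$ (note $e_1\alpha(e_2)e_1=0$ because $\alpha(e_2)\le e_2$) yields $\epsilon\, e_1\alpha(e_1)e_1 \mp \lambda e_1\ge 0$, i.e.\ $\pm\lambda\le \epsilon$ for every $\epsilon>0$, forcing $\lambda=0$. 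So the Lamperti hypothesis is indeed indispensable for the corollary, but only through $\alpha_g(e_1)\le e_1$ (equivalently $s(\alpha_g^*(e_2Xe_2))\le e_2$); once both inclusions hold, the off-diagonal vanishing is automatic from positivity, and your disjointness-plus-rotation argument, while valid, is doing work that is not needed.
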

\begin{proof}
First  note that we have the following equation $$ \tau( \alpha_{g}^*(X) y ) = \tau(X \alpha_{g}(y)  ) \text{ for all } X 
\in L^1(M, \tau) \text{ and } y \in M.$$
Suppose $X \in L^1(M, \tau) $ and $ g \in G$, then  using the proposition \ref{inv-lamperti}, we conclude the following 
\begin{enumerate}
	\item $ s( \alpha_g^*(e_1Xe_1)) \leq e_1 $ and $ s( \alpha_g^*(e_2Xe_2)) \leq e_2 $,
	\item $e_1 \alpha_g^*( e_1Xe_2) e_1 = e_1 \alpha_g^*( e_2Xe_1) e_1 = e_1 \alpha_g^*( e_2Xe_2) e_1 = 0 $,
	\item $e_2 \alpha_g^*( e_1Xe_2) e_2 = e_2 \alpha_g^*( e_2Xe_1) e_2 = e_2 \alpha_g^*( e_1Xe_1) e_2 = 0 $.
\end{enumerate}
Thus, we have $e_1\alpha_g^*(X) e_1 = e_1\alpha_g^*(e_1Xe_1) e_1 = \alpha_g^*(e_1Xe_1).$ Similarly, we will have $e_2\alpha_g^*(X) e_2= \alpha^*_g( e_2Xe_2)$.
\end{proof}

	\section{\textbf{Neveu decomposition for actions on any von Neumann algebra }}
	
	In this section we study Neveu Decomposition of a covariant system on arbitrary von Neumann algebra $M$ with an assumption that it commutes with the modular automorphism group associated to a f.n semifinite  weight $\psi$ on $M$.   We refer $(M, \psi)$ as non-commutative measure space.  \\\\
	Let  $L^2(M, \psi)$ be the  GNS Hilbert space associated to the f.n semifinite  weight  $\psi$ on  $M$ and  $ \pi_{\psi}, \Delta_\psi, J_\psi$ and $\sigma^\psi = ( \sigma^\psi_t)_{t \in \R }$ be the corresponding  GNS representation,  modular operator, modular conjugation operator and modular automorphism $M$  respectively. To simplify the notations, sometimes we write $\CH = L^2(M, \psi)$, identify $ \pi_{\psi}(x)$ as $x$  for all $ x \in M$ and write the  modular automorphisms group as  $\sigma = ( \sigma_t)_{t \in \R }$. 
Let $ M \rtimes_{\sigma^\psi} \R$ be the crossed product  constriction of $M$ with respect to its modular automorphisms group $\sigma^\psi$. We denote it by $N$, i.e, $ N= M \rtimes_{\sigma^\psi} \R$. Indeed, it is defined as follows. 
Consider the Hilbert space $L^2( \R, \CH) = \CH \otimes L^2(\R, m)$. For every $x \in M$ and $t \in \R$, define 
\begin{align*}
&(\pi_{\sigma^\psi} (x)\xi) (s) := \sigma_{-s}(x) \xi(s) \text{ and }\\ 
&(\lambda(t) \xi) (s) := \xi(s-t), \text{ for all } s \in \R \text{ and } \xi \in L^2( \R, \CH). 
\end{align*}	
Naturally, 	$\lambda(t) $ is expressed as $\lambda(t) = 1 \otimes \lambda_t $, where $ \lambda_t \in \CB(L^2(\R, m))$ is defined by 
$(\lambda_t(g) )(s) = g( s-t)$ for all $s \in \R$ and $g \in   \CB(L^2(\R, m))$. Then the crossed product  $M \rtimes_{\sigma^\psi} \R$
of $M$ with the action of modular automorphisms group $ \sigma^\psi$ is the von Neumann algebra generated by $\pi_{\sigma^\psi}(M)$ and $ \lambda(\R)$ i.e, 
$$ 	M \rtimes_{\sigma^\psi} \R = \{\{\pi_{\sigma^\psi} (x): x \in M \} \cup \{ \lambda(t) : t \in \R \} \}''.$$
	
	\begin{defn}
		Let $ (M, G, \alpha)$ be a non-commutative dynamical system. It is called Markov if there exists f.n semifinite weight $\psi$ on $M$ such that $ \alpha_g\circ \sigma^\psi_t = \sigma^\psi_t \circ \alpha_g $ for all $g \in G$ and $ t \in \R$. 
	\end{defn}

\begin{lem}\label{extention}
		Let $(M, \psi)$ be a non commutative measure space with a f.n semifinite  weight $\psi$ and $\sigma = ( \sigma_t)_{t \in \R }$ be its modular automorphisms group with respect to $ \psi$. Suppose $T : M \rar M$ is a positive contraction satisfying  $T \circ \sigma_t = \sigma_t \circ T  $ for all $t \in \R$. Then, there exists a unique  positive contraction $\widetilde{T} : N  \rar  N $ satisfying $$ \widetilde{T} ( \pi_\sigma( x ))  = \pi_\sigma( T(x)  \text{ and } \widetilde{T}( 1 \otimes \lambda_t) = 1 \otimes \lambda_t, \text{ for all } t \in \R .$$ 
\end{lem}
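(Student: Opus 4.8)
The plan is to build $\widetilde T$ first on a $\sigma$-weakly dense $*$-subalgebra of $N$ and then extend it by normality. Recall that $N$ is generated by $\pi_\sigma(M)$ and $\lambda(\R)$, and that a $\sigma$-weakly dense subalgebra is furnished by the twisted-convolution elements $\int_\R\pi_\sigma(x(s))\lambda(s)\,ds$ (with $x(\cdot)$ a suitable $M$-valued function), whose simplest representatives are the monomials $\pi_\sigma(x)\lambda(t)$. On such elements I would set
\[
\widetilde T_0\Big(\int_\R\pi_\sigma(x(s))\lambda(s)\,ds\Big):=\int_\R\pi_\sigma(T(x(s)))\lambda(s)\,ds,
\]
so that $\widetilde T_0$ applies $T$ to the $M$-valued symbol while leaving the $\lambda(\R)$-part fixed; in particular $\widetilde T_0(\pi_\sigma(x))=\pi_\sigma(T(x))$ and $\widetilde T_0(1\otimes\lambda_t)=1\otimes\lambda_t$. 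The covariance relation $\lambda(t)\pi_\sigma(x)\lambda(t)^*=\pi_\sigma(\sigma_t(x))$ together with the hypothesis $T\circ\sigma_t=\sigma_t\circ T$ is exactly what makes this prescription consistent: $\widetilde T_0$ intertwines the two sides of the covariance relation, so it is well defined and is a $\lambda(\R)$-bimodule map, $\widetilde T_0(\lambda(s)y\lambda(t))=\lambda(s)\widetilde T_0(y)\lambda(t)$.

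I would reduce contractivity to positivity. Once $\widetilde T_0$ is known to be positive, the general identity $\|\Phi\|=\|\Phi(1)\|$ for a positive map, together with $\widetilde T_0(1)=\pi_\sigma(T(1))\le\pi_\sigma(1)=1$ (since $T$ positive and contractive forces $T(1)\le 1$), gives $\|\widetilde T_0\|\le 1$. Thus the whole problem concentrates on positivity, which I expect to be the main obstacle. The naive route fails: one may legitimately embed $N\subseteq M\bar\otimes\CB(L^2(\R))$, because $\pi_\sigma(x)$ is the decomposable operator $s\mapsto\sigma_{-s}(x)\in M$ and $\lambda(t)=1\otimes\lambda_t$, but then $\widetilde T_0=(T\otimes\mathrm{id})|_N$ would require $T\otimes\mathrm{id}_{\CB(L^2(\R))}$ to be positive, i.e. $T$ completely positive, which is not assumed. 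The resolution is that only $\sigma$-twisted positive elements of $N$ occur, and on these the requirement is strictly weaker than complete positivity: testing positivity of $\int_\R\pi_\sigma(k(r))\lambda(r)\,dr$ against vectors of $L^2(\R,\CH)$ shows it is equivalent to positive definiteness of the $M$-valued kernel $K(s,u)=\sigma_{-s}(k(s-u))$, and since $T$ commutes with $\sigma$, $\widetilde T_0$ replaces $K$ by $T\circ K$. What must be checked is therefore that $T$ preserves positive definiteness of precisely the kernels coming from a single symbol with $\sigma$-linked diagonal.

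To establish this I would pass to the dual action $\hat\sigma$ of $\hat\R$ on $N$, whose fixed-point algebra is $\pi_\sigma(M)$ and with respect to which $\widetilde T_0$ is equivariant. This reduces positivity, through the spectral (Fourier) description of the positive cone, to the model trivial-action situation $M\bar\otimes L^\infty(\hat\R)$, where $L^\infty(\hat\R)$ is abelian and positivity of $T\otimes\mathrm{id}$ is automatic: a positive element is a field $\omega\mapsto F(\omega)\in M_+$, and the map acts pointwise by $F(\omega)\mapsto T(F(\omega))\ge 0$. Amenability of $\R$ enters to justify approximating a general positive element of $N$ by such spectrally supported pieces (a Fej\'er/F\o lner averaging), and I expect this reduction to the abelian model to be the delicate heart of the proof.

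Finally, with $\widetilde T_0$ positive and contractive on the dense convolution subalgebra, I would extend it to a normal positive contraction $\widetilde T$ on all of $N$ by $\sigma$-weak continuity, using normality of $T$ (available in the intended applications) so that a contraction bounded on the unit ball extends normally; this yields $\widetilde T(\pi_\sigma(x))=\pi_\sigma(T(x))$ and $\widetilde T(1\otimes\lambda_t)=1\otimes\lambda_t$. Uniqueness then follows from normality together with $\sigma$-weak density of the convolution algebra: the two stated relations propagate to the generating monomials $\pi_\sigma(x)\lambda(t)$ through the $\lambda(\R)$-bimodule property that the construction guarantees (each $\lambda(t)$ being a fixed unitary), so any normal positive contraction realizing the two relations must coincide with $\widetilde T$ on a $\sigma$-weakly dense set, hence everywhere.
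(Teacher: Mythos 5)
Your proposal stalls exactly where you predict it will: the positivity of $\widetilde T_0$ is never actually proved. You correctly reduce contractivity to positivity (Russo--Dye), and you correctly compute that, thanks to $T\circ\sigma_t=\sigma_t\circ T$, the map $\widetilde T_0$ acts on the kernel $K(s,u)=\sigma_{-s}(k(s-u))$ entrywise by $T$, so that what must be shown is that $T$ preserves positive definiteness of kernels of this twisted form. But your proposed resolution --- pass to the dual action, Fej\'er/F\o lner-average to approximate a positive element of $N$ by spectrally supported pieces, and then treat those pieces as living in the trivial-action model $M\otimes L^\infty(\hat{\R})$ --- does not go through as stated. Averaging against the dual action with a Fej\'er kernel does preserve positivity and does compress the Arveson spectrum, but an element of $N$ with compact spectrum is still a genuinely twisted convolution operator, not a positive field $\omega\mapsto F(\omega)\in M_+$; there is no $T$-equivariant, positivity-preserving identification of these pieces with the abelian model unless $\sigma$ is trivial, i.e.\ unless $\psi$ is a trace. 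The ``delicate heart'' you defer is thus the entire content of the lemma, and nothing in the sketch supplies it. (Your uniqueness argument also quietly imports normality of $T$, which is not among the stated hypotheses.)

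For comparison, the paper's own proof is precisely the ``naive route'' you reject: it sets $\widetilde T:=(T\otimes 1)|_N$ inside $M\otimes\CB(L^2(\R))$, asserts outright that $T\otimes 1$ is positive on the whole tensor product, and devotes all its work to invariance --- showing that $\pi_\sigma(x)$ is the SOT limit of step-function elements $x_\omega=\sum_i \sigma_{g_i^{-1}}(x)\otimes m_i$, which $T\otimes 1$ carries (using the commutation with $\sigma$) to the corresponding net for $T(x)$, whence $\widetilde T(\pi_\sigma(x))=\pi_\sigma(T(x))$ and $\widetilde T(N)\subseteq N$. Those approximants lie in $M\otimes L^\infty(\R)$, where applying $T$ slicewise is unambiguously positive because the second factor is abelian; but the global positivity claim for $T\otimes 1$ on $M\otimes\CB(L^2(\R))$ is exactly the complete-positivity issue you flag (transpose tensor identity is the standard counterexample), and SOT convergence of elements of a subalgebra does not by itself produce a positive map on all of $N$. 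So your diagnosis of the crux is sharper than the paper's treatment of it, but neither your sketch nor the paper's argument, as written, closes the positivity gap for maps that are merely positive rather than completely positive.
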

\begin{proof}
 Note that $N \subseteq M\otimes \mathcal{B}(L^2(\R))$.  Consider  $ \widetilde{T} = T \otimes 1 |_N$. Thus it is enough  to prove that  $\widetilde{T}(N) \subseteq N$, as the map $ T\otimes 1 : M\otimes \mathcal{B}(L^2(\R)) \rar M\otimes \mathcal{B}(L^2(\R)) $ is a positive map. Let $ I$ be a directed set of finite Borel partition of $G$. Suppose $ \omega_1 , \omega_2 \in I$, then we say $ \omega_1 \leq \omega_2 $ if $\omega_2$ is a refinement of $\omega_1$. Let $x \in M$ and $ \omega \in I $ with $\omega = \{ P_1, P_2, \cdots, P_n \}$ and take a set $ \{ g_1, g_2, \cdots, g_n\}$ with $ g_i \in P_i$ for $i = 1, 2, \cdots, n$,  then consider the following element 
 $$ x_\omega = \sum_{i=1}^{n } \sigma_{g_i^{-1}}(x) \otimes m_i,$$
where $m_i $ is the multiplication operator with respect to the function $ 1_{P_i}$ on $ L^2(\R)$. Thus $ \{ x_\omega : \omega \in I\}$ is a net in  $M\otimes \mathcal{B}(L^2(\R))$. Let $ \eta = \xi \otimes f $, where 
$ \xi \in \CH_{\psi} $ and $f \in L^2(\R)$. Then we have the following 
\begin{align*}
 \norm{ \pi_{\sigma} (x) \eta - x_\omega \eta }^2 &= \norm{\pi_{\sigma} (x)  (\xi \otimes f)-  x_\omega  (\xi \otimes f)}^2\\
 &= \int \norm{\pi_{\sigma} (x)  (\xi \otimes f)(s)-  x_\omega  (\xi \otimes f)(s)}^2d\lambda(s)\\
 &= \int \norm{  f(s)\sigma_{s^{-1} }(x)\xi  -  \sum_{i=1}^{n } \sigma_{g_i^{-1}}(x) \otimes m_i(\xi \otimes f)(s)}^2d\lambda(s)\\
 &= \sum_{i=1}^{n } \int_{P_i} \norm{  \sigma_{s^{-1} }(x)\xi  -  \sigma_{g_i^{-1}}(x)\xi}^2 \abs{f(s)}^2  d\lambda(s).
\end{align*}
Suppose  $ \epsilon >0$, find a compact set $K \subseteq \R$ such that $ \int_{ \R \setminus K} \abs{f(s)}^2  d\lambda(s) < \epsilon^2$.  Furthermore, choose a finite Borel partition $\{ K_1, K_2, \cdots, K_m \}$ of  $K$ such that $$ \norm{  \sigma_{s^{-1} }(x)\xi  -  \sigma_{t^{-1}}(x)\xi}  < \epsilon, \text{ for all } s, t \in K_i \text{ and }  1 \leq i \leq m.$$
Now consider the partition $ \omega = \{ K_1, K_2, \cdots, K_m,  K_{m+1} \} $ where $ K_{m+1} = G\setminus K $, then note that 
\begin{align*}
\norm{ \pi_{\sigma} (x) \eta - x_\omega \eta }^2 
 &= \sum_{i=1}^{m+1 } \int_{K_i} \norm{  \sigma_{s^{-1} }(x)\xi  -  \sigma_{g_i^{-1}}(x)\xi}^2 \abs{f(s)}^2  d\lambda(s)\\
 &<  (\norm{f}^2 + (2 \norm{x} \norm{\xi})^2) \epsilon^2. 
\end{align*}
Thus, this shows that the net $ \{ x_\omega : \omega \in I  \}$  converges to $ \pi_{\sigma} (x)$ in strong operator topology. 
Thus, we note that 
\begin{align*}
T\otimes 1 ( x_\omega ) &= T\otimes 1 (\sum_{i=1}^{n } \sigma_{g_i^{-1}}(x) \otimes m_i)\\
&= \sum_{i=1}^{n } T (\sigma_{g_i^{-1}}(x)) \otimes m_i\\
&= \sum_{i=1}^{n } \sigma_{g_i^{-1}}(T(x)) \otimes m_i, \text{ as } 	T \circ \sigma_t = \sigma_t \circ T
\end{align*}
Therefore, $\{ T\otimes 1 ( x_\omega ):  \omega \in I \} $ converges to $ \pi_{\sigma} (T(x)) $ strongly. Hence, $ (T \otimes 1) ( \pi_{\sigma} (x)  ) = \pi_{\sigma} (T(x)) $ and $ (T \otimes 1) ( 1 \otimes \lambda_t) = 1 \otimes \lambda_t $ for all $ t \in \R$. Thus,  $\tilde{T} ( N) \subseteq N$. 
\end{proof}

Let $M$ be any von Neumann algebra with f.n semifinite weight $\psi$. 
Then recall the dual weight, let  $\widetilde{\psi} $ be the dual weight on $N$. It is defines as follows 
	$$ \widetilde{\psi}( x) = \psi \circ \pi_{\sigma^\psi}^{-1} (\gamma x), ~~ x \in N_+$$
where for every $x \in N_+$, $\gamma(x)$ preparatorily  defined as 
	\begin{align*}
	\gamma(x)(\omega):= \int_\R \omega(\hat{\alpha}_p(x)) dp ~~\text{for all } \omega \in N_*^+ .
	\end{align*}
For the detailed definition we refer \cite{Hiai2020}. 
We further recall that $N$ is a semifinite von Neumann algebra with a  f.n.s trace $\tau$ defined by 
	$$ \tau( x ) = \lim_{\epsilon  \to 0} \widetilde{\psi} ( B_\epsilon x  B_\epsilon)$$
	Here $ B $ is the non-singular generator of $( \lambda(-t))$ and $ B_\epsilon = \frac{B}{1+\epsilon B}$.

\begin{lem}\label{SOT-inf}
Let $M$ be any von Neumann algebra and $( M, G,  \alpha) $ be a dynamical system. Then the following are equivalent.
\begin{enumerate}
	\item Suppose there exists a $p \in \CP_0(M)$ such that $A_{n_k}(p) \rar 0$ in SOT for some subsequence $(n_k)$.
	\item Support of  Maximal $G$-invariant state on $M$ is not $1$. 
	\item There exists a non-zero weakly wandering operator for $( M, G,  \alpha) $.
\end{enumerate}

\end{lem}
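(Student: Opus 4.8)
\emph{Strategy.} The three statements split into several ``soft'' implications, provable directly on $M$, and a single genuinely hard one. The plan is to establish $(1)\Leftrightarrow(2)$ and $(3)\Rightarrow(1)$ by elementary arguments, and then to prove $(1)\Rightarrow(3)$ by passing to the semifinite crossed product $N=M\rtimes_{\sigma^\psi}\R$; the Markov hypothesis is exactly what allows this lift via Lemma \ref{extention}. Together these give $(1)\Leftrightarrow(2)$ and $(1)\Leftrightarrow(3)$, hence all equivalences.

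\emph{The soft implications.} For $(1)\Rightarrow(2)$ I argue by contraposition: if the maximal invariant state has support $1$, it is a faithful invariant normal state $\rho$, and for any $p\in\CP_0(M)$ invariance gives $\rho(A_n(p))=\rho(p)>0$ for all $n$; since $\{A_{n_k}(p)\}$ is bounded and $\rho\in M_*$, SOT-convergence to $0$ would force $\rho(A_{n_k}(p))\to0$, a contradiction. For $(2)\Rightarrow(1)$, fix a faithful normal state $\varphi\in M_*$ (available as $M$ is $\sigma$-finite). If $e\neq1$ is the maximal support, then by Theorem \ref{supp of maximal state} together with Proposition \ref{existence of inv normal state} not every nonzero subprojection of $1-e$ can satisfy the infimum condition \eqref{inf condition-2}, so there is $0\neq f'\leq 1-e$ with $\inf_n\varphi(A_n(f'))=0$; a subsequence then gives $\varphi(A_{n_k}(f'))\to0$, and faithfulness plus normality of $\varphi$ upgrades this to $A_{n_k}(f')\to0$ in SOT. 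Finally $(3)\Rightarrow(1)$ is immediate: if $x_0\in M_+$ is weakly wandering, a nonzero spectral projection $p=\chi_{[c,\infty)}(x_0)$ satisfies $p\leq c^{-1}x_0$, so $\norm{A_n(p)}\leq c^{-1}\norm{A_n(x_0)}\to0$, hence $A_n(p)\to0$ even in norm.

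\emph{The hard implication $(1)\Rightarrow(3)$.} This is where the trace is needed, since the passage from normal to arbitrary functionals in the proof of Theorem \ref{weakly-wandering} rests on Lemma \ref{main lem-semifinite2}, whose final intersection step uses subadditivity of the trace on projections and genuinely fails for a general state. So I lift the situation to $N$, which is semifinite. Given $p$ as in $(1)$, set $\walpha_g$ to be the lift of $\alpha_g$ from Lemma \ref{extention}, so that $\walpha_g(\pi_\sigma(x))=\pi_\sigma(\alpha_g(x))$ and hence $\widetilde{A_n}(\pi_\sigma(p))=\pi_\sigma(A_n(p))$. Writing the action of $\sigma_t$ through the modular unitary, $\norm{\sigma_{-s}(y)\xi}=\norm{y\,\Delta^{is}\xi}$, dominated convergence shows that $A_{n_k}(p)\to0$ in SOT on $\CH$ implies $\pi_\sigma(A_{n_k}(p))\to0$ in SOT on $L^2(\R,\CH)$. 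Thus $\pi_\sigma(p)\in\CP_0(N)$ satisfies the hypothesis of the semifinite theory, and Theorem \ref{weakly-wandering} (applied on $N$, whose decomposition is provided by Theorem \ref{neveu decomposition}) yields a maximal weakly wandering support projection $E_2\neq0$ in $N$.

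\emph{Descent and main obstacle.} To return to $M$ I would use that the dual action $\hat\sigma$ of $\R$ on $N$ commutes with each $\walpha_g$ (both fix $\lambda(\R)$ and act as $\alpha_g$ on the $\hat\sigma$-fixed algebra $\pi_\sigma(M)$), so $\hat\sigma_s$ carries weakly wandering operators to weakly wandering operators; by uniqueness of $E_2$ this forces $\hat\sigma_s(E_2)=E_2$, whence $E_2\in N^{\hat\sigma}=\pi_\sigma(M)$ by Takesaki duality, say $E_2=\pi_\sigma(e_2)$ with $0\neq e_2\in\CP_0(M)$, and $\alpha_g(e_2)\leq e_2$ as in Proposition \ref{inv-lamperti}. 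The crux of the whole argument is that this only descends the \emph{support}: a single weakly wandering operator of $N$ need not lie in $\pi_\sigma(M)$, and since $\hat\sigma$ scales the trace one cannot average it over the non-compact group $\R$ into the fixed-point algebra. I expect the main work to be producing an honest weakly wandering operator \emph{inside} $M$. The key leverage is that weak wandering transfers isometrically along $\pi_\sigma$, because $\norm{\widetilde{A_n}(\pi_\sigma(x))}=\norm{A_n(x)}$; so it suffices to manufacture, below $e_2$, a maximal orthogonal family of weakly wandering projections already living in $M$, and then set $x_0=\sum_j 2^{-j}q_j$ exactly as in Theorem \ref{weakly-wandering}. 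Exhibiting these $q_j$ in $M$ — rather than merely in the corner $E_2NE_2$ delivered by Lemma \ref{main lem-semifinite2} — is the technical heart, to be handled by combining the subinvariance $\alpha_g(e_2)\leq e_2$, the orthogonality structure coming from the maximal family in $N$, and the isometric transfer of weak wandering across $\pi_\sigma$.
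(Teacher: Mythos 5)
Your soft implications are correct and essentially coincide with what the paper intends: the paper disposes of this lemma in a single line by citing Theorem \ref{supp of maximal state}, whose content (together with Proposition \ref{existence of inv normal state} and the standard upgrade $\varphi(A_{n_k}(q))\to 0 \Rightarrow A_{n_k}(q)\to 0$ in SOT for a f.n state $\varphi$) is precisely your $(1)\Leftrightarrow(2)$ argument and the trivial directions out of $(3)$; the production of a weakly wandering operator from $(1)$ or $(2)$ is delegated in the paper to Lemma \ref{weakly-wandering-proj} (Markov case, proved immediately afterwards) and Theorem \ref{weakly-wandering} (semifinite case). You also correctly diagnose where the difficulty lies: Lemma \ref{main lem-semifinite2} genuinely uses subadditivity of the trace on projections, so $(1)\Rightarrow(3)$ cannot be run directly on a general $M$, and the lift to $N=M\rtimes_{\sigma^\psi}\R$ via Lemma \ref{extention}, including your dominated-convergence transfer of SOT convergence to $\pi_\sigma(A_{n_k}(p))$ and the application of the semifinite Neveu decomposition (Theorem \ref{neveu decomposition}) to obtain a weakly wandering $X_0\in N$ with $s(X_0)=E_2\neq 0$, is sound and parallels the paper.

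However, at exactly the point you call ``the technical heart'' your proposal stops being a proof: you leave open the production of a weakly wandering operator inside $M$, offering only a program (dual action, Takesaki duality to get $E_2=\pi_\sigma(e_2)$, then rebuilding a maximal orthogonal family of wandering projections in $M$). That descent of the \emph{support} is indeed useless by itself, and averaging over $\hat\sigma$ is a dead end as you say — so the decisive step is missing. The paper closes it with a one-line compression that your own setup already contains: since $\widetilde{\alpha}_g=(\alpha_g\otimes 1)|_N$ by Lemma \ref{extention} and $X_0\in N\subseteq M\otimes\CB(L^2(\R))$, choose a unit vector $\xi\in L^2(\R)$ with $(1\otimes p_\xi)X_0(1\otimes p_\xi)\neq 0$; because $p_\xi\,\CB(L^2(\R))\,p_\xi=\C p_\xi$, this compression equals $x_0'\otimes p_\xi$ for some nonzero $x_0'\in M_+$, and then
\begin{align*}
\norm{A_n(\alpha)(x_0')}=\norm{(1\otimes p_\xi)\,A_n(\widetilde{\alpha})(X_0)\,(1\otimes p_\xi)}\leq\norm{A_n(\widetilde{\alpha})(X_0)}\rar 0 ,
\end{align*}
so $x_0'$ is weakly wandering in $M$, and a spectral projection $p$ with $\frac{1}{m}p\leq x_0'$ gives $(3)$ (and a wandering \emph{projection}, which is what Lemma \ref{weakly-wandering-proj} actually needs downstream). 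No identification of $E_2$, no dual action, and no requirement that the wandering operator of $N$ lie in $\pi_\sigma(M)$ are needed. So the gap in your write-up is real, but it has a short repair, namely the slicing argument of the paper's Lemma \ref{weakly-wandering-proj}, and the duality machinery you propose in its place can be discarded.
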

\begin{proof}
The proof follows from Theorem 	\ref{supp of maximal state}.
\end{proof}

		\begin{lem}\label{weakly-wandering-proj}
		Let $(M, \psi)$ be a non commutative measure space with a f.n.s weight $\psi$ and $( M, G,  \alpha) $ 
		be a non commutative dynamical system such that $$ \sigma_t^\psi \circ \alpha_g = \alpha_g \circ \sigma_t^\psi, \text{ for all } t \in \R \text{ and }, g \in G.$$ 
	 Suppose  that  support of maximal $G$-invariant state is not full, 
		then there exists a  $p \in \CP_0( M)$ such that $A_n(\alpha)(p) \xrightarrow{\norm{\cdot}} 0$. 
	\end{lem}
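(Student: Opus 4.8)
The plan is to lift the entire problem to the crossed product $N = M \rtimes_{\sigma^\psi} \R$, which is semifinite, apply the weakly wandering theory of Section~3 there, and then descend the resulting projection back to $M$. Since each $\alpha_g$ is a positive contraction commuting with $\sigma^\psi_t$ (the Markov hypothesis), Lemma~\ref{extention} produces a positive contraction $\widetilde{\alpha}_g := (\alpha_g \otimes 1)|_N$ with $\widetilde{\alpha}_g(\pi_\sigma(x)) = \pi_\sigma(\alpha_g(x))$ and $\widetilde{\alpha}_g(1\otimes\lambda_t) = 1\otimes\lambda_t$. By the uniqueness clause of Lemma~\ref{extention}, $g \mapsto \widetilde{\alpha}_g$ is again an action, and one checks conditions (C), (UB), (P), (U), so that $(N, G, \widetilde{\alpha})$ is a non-commutative dynamical system on the semifinite, $\sigma$-finite algebra $(N,\tau)$ ($\sigma$-finite because $\CH$, hence $L^2(\R,\CH)$, is separable). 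The key structural fact is that $\pi_\sigma$ is a normal isometric $*$-homomorphism intertwining the averages, $A_n(\widetilde{\alpha})\circ\pi_\sigma = \pi_\sigma\circ A_n(\alpha)$; thus $\norm{A_n(\alpha)(p)} = \norm{A_n(\widetilde{\alpha})(\pi_\sigma(p))}$ for all $p \in M$, so $p$ is weakly wandering for $(M,\alpha)$ if and only if $\pi_\sigma(p)$ is weakly wandering for $(N,\widetilde{\alpha})$.

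Next I would transfer the hypothesis to $N$. By the hypothesis and Lemma~\ref{SOT-inf} applied to $(M,G,\alpha)$, there are $p_0 \in \CP_0(M)$ and a subsequence with $A_{n_k}(\alpha)(p_0) \to 0$ in SOT. Applying $\pi_\sigma$, and using that each $\sigma_{-s}$ is spatially implemented so that SOT convergence passes through the fibrewise formula defining $\pi_\sigma$ together with dominated convergence, we obtain $A_{n_k}(\widetilde{\alpha})(\pi_\sigma(p_0)) \to 0$ in SOT in $N$. Now Lemma~\ref{SOT-inf} applied to $(N,G,\widetilde{\alpha})$ shows that the support of the maximal $\widetilde{\alpha}$-invariant normal state on $N$ is not full, so by Theorems~\ref{weakly-wandering} and~\ref{neveu decomposition} the Neveu decomposition of $N$ has a nonzero weakly wandering part $\widetilde{e}_2$.

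The final and hardest step is to descend to $M$. Here I would use the dual $\R$-action $\hat{\sigma}$ on $N$, whose fixed-point algebra is exactly $\pi_\sigma(M)$. Since $\hat{\sigma}_s$ fixes each $\pi_\sigma(x)$ and scales $\lambda(t)$, while $\widetilde{\alpha}_g$ fixes $\lambda(t)$ and acts as $\alpha_g$ on $\pi_\sigma(M)$, the two actions commute; hence $\hat{\sigma}_s$ carries $\widetilde{\alpha}$-invariant normal states (resp. weakly wandering operators) to objects of the same type, and by the uniqueness of the Neveu projections it must fix $\widetilde{e}_1$ and $\widetilde{e}_2$. Therefore $\widetilde{e}_2 \in \pi_\sigma(M)$, say $\widetilde{e}_2 = \pi_\sigma(f)$ with $f \in \CP_0(M)$. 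As $f$ itself need not be weakly wandering, I would pass to the corner $\widetilde{e}_2 N \widetilde{e}_2$, which is semifinite and carries no $\widetilde{\alpha}$-invariant normal state (Lemma~\ref{support of state and support of ww}), and produce a weakly wandering projection there.

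The main obstacle is exactly this last production: the selection mechanism of Section~3 a priori yields a subprojection in $N$, whereas I need one inside the fixed-point subalgebra $\pi_\sigma(fMf)\cong fMf$. I would resolve this by running Lemma~\ref{main lem-semifinite2} equivariantly with respect to $\hat{\sigma}$, applied to the $\hat{\sigma}$-invariant, weak*-compact set of $\widetilde{\alpha}$-invariant singular functionals (weak*-compactness coming from Lemma~\ref{req cpt set}) and using a faithful normal state on $fMf$ supplied by the $\sigma$-finiteness of $M$, so that the output $q' = \pi_\sigma(p)$ lies in $\pi_\sigma(fMf)$ and annihilates every such singular functional. Since the normal functionals already vanish asymptotically on the averages of $q'$, the subsequence/barycenter argument of Theorem~\ref{weakly-wandering} gives $A_{n_k}(\widetilde{\alpha})(q') \to 0$ weakly, and the mean ergodic theorem (Theorem~\ref{mean erg thm}) upgrades this to $\norm{A_n(\widetilde{\alpha})(q')} \to 0$. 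Intertwining back through $\pi_\sigma$ then yields $\norm{A_n(\alpha)(p)} \to 0$ with $p \in \CP_0(M)$, as required.
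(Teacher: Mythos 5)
Your first half is sound and is essentially the paper's: lift to $N = M \rtimes_{\sigma^\psi} \R$ via Lemma \ref{extention}, check that $(N, G, \widetilde{\alpha})$ is a non-commutative dynamical system on a semifinite algebra, and show the weakly wandering part $\widetilde{e}_2$ of its Neveu decomposition is nonzero. (The paper gets $\widetilde{e}_2 \neq 0$ slightly more directly: if $\widetilde{e}_2 = 0$, the invariant normal state $\rho$ on $N$ is faithful, and $\rho \circ \pi_{\sigma}$ is a faithful normal $G$-invariant state on $M$, contradicting the hypothesis; your route through Lemma \ref{SOT-inf} and SOT-transport along $\pi_\sigma$ also works.) Your observation that the dual action commutes with $\widetilde{\alpha}$ and therefore, by uniqueness of the Neveu projections, fixes $\widetilde{e}_1$ and $\widetilde{e}_2$, so that $\widetilde{e}_2 = \pi_\sigma(f)$ for some $f \in \CP_0(M)$, is correct and is a nice structural remark the paper does not make.

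The genuine gap is the last step, exactly where you flagged ``the main obstacle.'' Running Lemma \ref{main lem-semifinite2} ``equivariantly'' so that its output lands in $\pi_\sigma(fMf)$ is not a routine modification: the lemma is proved for a semifinite $(M,\tau)$ (it uses a finite-trace projection $p$ to build the state $\tau_p$, and the spectral projections $\chi_{[\frac{1}{2},1)}(x_\epsilon)$ of elements of $\Phi_\epsilon$ must stay in the subalgebra where the projection is sought), and, crucially, it requires every $\mu \in K$ to restrict to a \emph{singular} functional on that subalgebra --- this is what furnishes the approximants $p_n \uparrow p$ with $\mu(p_n)=0$ that start the whole Hahn--Banach/barycenter argument. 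But $\pi_\sigma(fMf) \cong fMf$ is a general, possibly type III, corner of $M$ (removing semifiniteness was the entire reason for the crossed-product detour), and the restriction to $\pi_\sigma(fMf)$ of a functional singular on $\widetilde{e}_2 N \widetilde{e}_2$ can acquire a nonzero normal part; you neither rule this out nor provide a substitute, and if such an equivariant singular-functional argument were available on $fMf$ directly, the crossed product would be superfluous. The paper bypasses all of this with a short compression argument that needs neither the dual action nor any equivariance: choose a unit vector $\xi \in L^2(\R)$ with $(1 \otimes p_\xi) x_0 (1 \otimes p_\xi) = x_0' \otimes p_\xi \neq 0$ for some $0 \neq x_0' \in M_+$; since $\widetilde{\alpha}_g = (\alpha_g \otimes 1)|_N$ is a bimodule map over $1 \otimes \CB(L^2(\R))$, one gets
\begin{align*}
\norm{A_n(\alpha)(x_0')} = \norm{(1 \otimes p_\xi)\, A_n(\widetilde{\alpha})(x_0)\, (1 \otimes p_\xi)} \leq \norm{A_n(\widetilde{\alpha})(x_0)} \xrightarrow{n \rar \infty} 0,
\end{align*}
so $x_0'$ is weakly wandering for $(M,G,\alpha)$, and then a spectral projection $p$ with $\frac{1}{m} p \leq x_0'$ satisfies $\norm{A_n(\alpha)(p)} \leq m \norm{A_n(\alpha)(x_0')} \rar 0$. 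Replacing your final step with this slice argument would complete your proof; as written, the proposal is incomplete at its decisive point.
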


	\begin{proof}
Let $N= M\rtimes_{\sigma^\psi}\R$. Note that for each $ g \in G $, $ \alpha_g : M \rar M	$ is a unital positive map and 	$ \sigma_t^\psi \circ \alpha_g = \alpha_g \circ \sigma_t^\psi$, so, by 
Lemma \ref{extention}, there exists a unital positive map $ \widetilde{ \alpha}_g : N \rar N$ such that 
$$ \widetilde{\alpha}_g ( \pi_\sigma( x ))  = \pi_\sigma( \alpha_g(x)  \text{ and } \widetilde{\alpha}_g( 1 \otimes \lambda_t) = 1 \otimes \lambda_t, \text{ for all } t \in \R .$$ 
Write $ \widetilde{ \alpha} = ( \widetilde{ \alpha}_g )_{g \in G}$, then it is straightforward to  check that $ (N, G, \widetilde{ \alpha} )$ is a non-commutative dynamical system and further note that $N$ is a semifinite von Neumann  algebra.   		
		
Then by Theorem \ref{neveu decomposition} there exits $ e_1, e_2 \in \CP_0(N)$ such that 
	\begin{enumerate}
	\item there exists a $G$-invariant normal state $\rho$ on $N$  with support $s(\rho) = e_1$ and
	\item there exists a weakly wandering operator $x_0 \in N$ with support $s(x_0)= e_2$.
\end{enumerate}
First suppose that $e_2 =0$, and note that $\rho$ is f.n  $G$-invariant state on $N$. Now consider $\rho_0(x)=
\rho(\pi_{\sigma}(x))$ for all  $ x \in M$. Note that $\rho_0\circ \alpha_g= \rho_0$ for all $g \in G$ and $\rho_0$ is a f.n state on $M$, in other words $s(\rho_0)=1$. This is a contradiction to our assumption. Consequently, we have $e_2 \neq 0$. 
\\\\
Now assume that  $ e_2 \neq 0$. 		
Observe that 
$x_0\in N \subseteq M\otimes \mathcal{B}(L^2(\R))$. Choose a unit vector $\xi \in L^2(\R)$ such that $ (1\otimes p_\xi)  x_0 (1\otimes p_\xi) \neq 0$. Since $ x_0 \in  N \subseteq M\otimes \mathcal{B}(L^2(\R))$, so we have $ (1\otimes p_\xi) x_0 (1\otimes p_\xi) = x'_0 \otimes p_\xi$ for some non zero positive operator $x'_0 \in M$. Then, observe the following 
\begin{align*}
\norm{ A_n(\alpha)(x'_0)}&= \norm{ A_n(\alpha \otimes 1)(x'_0\otimes p_\xi)}\\
&=\norm{ A_n(\alpha \otimes 1)((1\otimes p_\xi) x_0 (1\otimes p_\xi))}\\
&=\norm{ (1\otimes p_\xi) A_n(\alpha \otimes 1)x_0 (1\otimes p_\xi)}\\
&=\norm{ (1\otimes p_\xi) A_n(\tilde{\alpha })x_0 (1\otimes p_\xi)}\\
&\leq \norm{  A_n(\tilde{\alpha })x_0 }\xrightarrow{n \rightarrow \infty} 0.\\
\end{align*}
Thus, $x'_0$ is a weakly wandering operator for $(M, G, \alpha)$. 
Now choose a non zero  projection 	$p \in M$ such that $\frac{1}{m} p \leq x_0$, for some $m \in \N$, to do this one can use spectral theorem. 
Then note that $$ \norm{ A_n(\alpha)(p)} \leq m  \norm{ A_n(\alpha)(x_0)} \xrightarrow{n \rightarrow \infty} 0.$$
Hence, $p$ is  a  non zero weakly wandering projection for $(M, G, \alpha)$. 
\end{proof}

For each $n \in \N$, consider the compact set $ F_n = [-n, n]\subseteq \R$ , then note that $(F_n)_{n \in \N}$ is a F\o lner sequence of $\R$. 	Suppose $\lambda$ is the Lebesgue measure on $\R$.
\begin{lem}\label{weakly-wandering-op}
Let $(M, G, \alpha)$ be a Markov non-commutative dynamical  system and  $p\in  \CP_0(M)$ is weakly wandering. For fix   $ n, k \in \N$, consider   $$x_{k, n}:= \frac{1}{m(F_k)} \frac{1}{m(K_n)}  \int_{K_n} \int_{F_k}  \alpha_g \circ \sigma_t(p) dm(g)d\lambda(t), $$ then $x_{k, n}$  is also weakly  wandering and $s(x_{k, n})= \bigvee_{g \in K_n}\bigvee_{t \in F_k} \alpha_g\circ \sigma_t(p)$.
\end{lem}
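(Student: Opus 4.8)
The plan is to prove the two assertions separately, the weak-wandering statement first. The key observation is the factorisation
\[
x_{k,n}=A_n(y_k),\qquad y_k:=\frac{1}{m(F_k)}\int_{F_k}\sigma_t(p)\,d\lambda(t),
\]
obtained by pulling each normal contraction $\alpha_g$ through the $w^*$-integral over $F_k$ and applying Fubini. First I would record that every $\sigma_t(p)$ is weakly wandering: the Markov relation $\alpha_h\circ\sigma_t=\sigma_t\circ\alpha_h$ gives $A_N(\sigma_t(p))=\sigma_t(A_N(p))$, and since $\sigma_t$ is isometric, $\norm{A_N(\sigma_t(p))}=\norm{A_N(p)}\to 0$. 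Integrating over $t\in F_k$ yields $\norm{A_N(y_k)}\le\norm{A_N(p)}\to 0$, so $y_k$ is weakly wandering as well.

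It then remains to show that $A_n$ maps weakly wandering operators to weakly wandering operators. For a fixed $g$, right-invariance of $m$ and the substitution $h\mapsto hg$ give the standard F\o lner estimate
\[
\norm{A_N(\alpha_g(w))}\le\norm{A_N(w)}+\frac{m(K_Ng\,\Delta\,K_N)}{m(K_N)}\norm{w}
\]
for any $w\in M$. Applying this with $w=y_k$ and averaging over $g\in K_n$,
\[
\norm{A_N(x_{k,n})}=\norm{A_N(A_n(y_k))}\le\norm{A_N(y_k)}+\frac{\norm{y_k}}{m(K_n)}\int_{K_n}\frac{m(K_Ng\,\Delta\,K_N)}{m(K_N)}\,dm(g).
\]
As $N\to\infty$ the first term vanishes because $y_k$ is weakly wandering, while the integrand in the second term is bounded by $2$ and tends to $0$ pointwise by the F\o lner condition (P2), so dominated convergence over the finite measure space $K_n$ forces the whole right-hand side to $0$. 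Hence $x_{k,n}$ is weakly wandering.

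For the support identity, write $z_{g,t}:=\alpha_g\sigma_t(p)\ge 0$ and $q:=\bigvee_{g\in K_n}\bigvee_{t\in F_k}s(z_{g,t})$. The inclusion $s(x_{k,n})\le q$ is immediate: since $s(z_{g,t})\le q$ we have $(1-q)z_{g,t}(1-q)=0$ for every $(g,t)$, and integrating gives $(1-q)x_{k,n}(1-q)=0$, whence $x_{k,n}(1-q)=0$ by positivity, i.e.\ $s(x_{k,n})\le q$. For the reverse inclusion put $f:=1-s(x_{k,n})$, so that $fx_{k,n}f=0$, that is $\int_{K_n}\int_{F_k}fz_{g,t}f\,d\lambda(t)\,dm(g)=0$. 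Fixing a faithful normal state $\phi$ on $M$ (which exists since $M$ is $\sigma$-finite) and interchanging $\phi$ with the $w^*$-integral shows $\phi(fz_{g,t}f)=0$ for $(m\times\lambda)$-a.e.\ $(g,t)$, and faithfulness of $\phi$ upgrades this to $fz_{g,t}f=0$ for a.e.\ $(g,t)$.

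The main obstacle is the final passage from ``almost every $(g,t)$'' to ``every $(g,t)$'', which is precisely what the claimed equality demands; I expect this to require continuity together with full support of the averaging measure, handled one variable at a time. By Fubini, for a.e.\ $t$ the $w^*$-continuous map $g\mapsto fz_{g,t}f=f\alpha_g(\sigma_t(p))f$ (continuity by condition (C) of Definition \ref{action on Banach sp}) vanishes on an $m$-conull, hence dense, subset of $K_n$, so it vanishes on all of $K_n$; then for each fixed $g$ the $w^*$-continuous map $t\mapsto f\alpha_g\sigma_t(p)f$ (using normality of $\alpha_g$ and $\sigma$-weak continuity of the modular flow) vanishes on a $\lambda$-conull dense subset of $F_k=[-k,k]$ and therefore everywhere. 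Thus $fz_{g,t}f=0$, equivalently $s(z_{g,t})\le s(x_{k,n})$, for all $(g,t)$, giving $q\le s(x_{k,n})$ and completing the proof.
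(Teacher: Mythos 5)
Your proposal is correct and follows essentially the same route as the paper: both factor the double average through the Markov commutation relation (you write $x_{k,n}=A_n(y_k)$ with $y_k$ the modular average, while the paper writes $x_{k,n}$ as the $\sigma$-average of $x_n=A_n(p)$ --- the same two ingredients in the opposite order), both kill the ergodic averages via the isometry of $\sigma_t$ combined with the right-invariance/F\o lner estimate $\norm{A_N(\alpha_g(w))}\le \norm{A_N(w)}+\frac{m(K_Ng\,\Delta\,K_N)}{m(K_N)}\norm{w}$, and both derive the support identity from positivity together with $w^*$-continuity of $(g,t)\mapsto \alpha_g\sigma_t(p)$. If anything, your explicit Fubini-plus-density upgrade from ``almost every $(g,t)$'' to ``every $(g,t)$'' (via a faithful normal state and full support of the averaging measure) is spelled out more carefully than the paper's vector-state computation, which relies on the same continuity-plus-positivity point without isolating it.
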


\begin{proof}
Consider $x_n = 	\frac{1}{m(K_n)}  \int_{K_n} \alpha_g (p) dm(g)$. Then note that 
\begin{align*}
x_{k, n} &=  \frac{1}{m(F_k)} \frac{1}{m(K_n)}  \int_{K_n} \int_{F_k}  \alpha_g \circ \sigma_t(p) dm(g)\\
&= \frac{1}{m(F_k)}  \int_{F_k}   \sigma_t \Big(   \frac{1}{m(K_n)}  \int_{K_n} \alpha_g (p) dm(g) \Big) d\lambda(t), \text{ as } \sigma_t \circ \alpha_g = \alpha_g \circ \sigma_t\\ 
&= \frac{1}{m(F_k)}  \int_{F_k}   \sigma_t (  x_n ) d\lambda(t).
\end{align*}
Thus, observe that whenever $x_n$ is weakly  wandering, then $x_{k, n} $ is also weakly  wandering. 
So it is enough to prove that 
$x_n$ is weakly  wandering. Indeed, for all $l \in \N$, note that 
\begin{align*}
	A_l(x_n) &= \frac{1}{m(K_l)} \int_{K_l} \alpha_g(x_n) dm(g) \\
	&= \frac{1}{m(K_l)} \frac{1}{m(K_n)}  \int_{K_l}  \int_{K_n} \alpha_{gh}(p) dh dg \\
	&= \frac{1}{m(K_n)} \frac{1}{m(K_l)}  \int_{K_n}  \int_{K_l} \alpha_{gh}(p) dg dh \\
	&= \frac{1}{m(K_n)} \frac{1}{m(K_l)}  \int_{K_n} \Big[ \int_{K_l} \alpha_{gh}(p) dg - \int_{K_l} \alpha_{g}(p) dg \Big] dh + \frac{1}{m(K_l)} \int_{K_l} \alpha_{g}(p) dg.
	\end{align*}
	Hence, for all $n,l \in \N$
	\begin{align*}
	\norm{A_l(x_n)} & \leq \frac{m(K_l h \Delta K_l )}{m(K_l)} \sup_{g \in K_l} \norm{\alpha_g(p)} + \norm{A_l(p)}\\
	& \leq \frac{m(K_l h \Delta K_l )}{m(K_l)} + \norm{A_m(p)}
	\end{align*}
Therefore, $\lim_{l \to \infty} \norm{A_l(x_n)} =0$.\\\\
For the other part let us denote $q= \bigvee_{g \in K_n} \alpha_g(p)$. Note that 
	\begin{align*}
	q x_n = \frac{1}{m(K_n)} \int_{K_n} q \alpha_g(p) dm(g) = \frac{1}{m(K_n)} \int_{K_n} \alpha_g(p) dm(g)= x_n.
	\end{align*}
	Therefore, $q\geq s(x_n)$. Conversely if $\xi \in q^\perp$, then
	\begin{align*}
	\inner{q \xi}{\xi}= 0 &\Rightarrow \inner{\alpha_g(p) \xi}{\xi}=0 ~~\text{ for all } g \in K_n \\
	& \Rightarrow \inner{x_n \xi}{\xi}= 0 \\
	& \Rightarrow x_n^{1/2} \xi =0 \\
	& \Rightarrow \xi \in s(x_n)^\perp.
	\end{align*}
	Therefore, $q=s(x_n)$. By the same argument, we obtain that  $$s(x_{k, n})= \bigvee_{g \in K_n}\bigvee_{t \in F_k} \alpha_g\circ \sigma_t(p)$$.
\end{proof}

	Let $\varphi$ be a  f.n state on $M$ and $e$ be the maximal projection in $M$ satisfying the following condition
\begin{align*}
\inf_{n \in \N} A_n(\varphi)(p)>0 \ \forall \ p \in \mathcal{P}(M) \text{ such that } 0 \neq p \leq e.
\end{align*}

\begin{lem}
	Let $(M, G, \alpha)$ be a Markov  covariant system. 
	Then there  exists a weakly wandering operator $x_0 \in M$ such that $s(x_0) = 1-e$. 
\end{lem}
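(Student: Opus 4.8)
The plan is to reduce to the semifinite crossed product $N = M \rtimes_{\sigma^\psi}\R$, combine the descent mechanism of Lemma \ref{weakly-wandering-proj} with a maximality argument, and exploit as the decisive new ingredient that the dissipative part $1-e$ is invariant under the modular group. First I would dispose of the trivial case $e=1$ by taking $x_0=0$, so assume $e\neq 1$. Let $\rho$ be the maximal $G$-invariant normal state with $s(\rho)=e$ (it exists by Theorem \ref{supp of maximal state}). The Markov relation $\alpha_g\circ\sigma_t^\psi=\sigma_t^\psi\circ\alpha_g$ makes $\rho\circ\sigma_t^\psi$ again a $G$-invariant normal state, of support $\sigma_{-t}^\psi(e)$; maximality of $e$ forces $\sigma_{-t}^\psi(e)\leq e$ for all $t$, hence $\sigma_t^\psi(e)=e$, so $1-e$ is $\sigma^\psi$-invariant. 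Since $A_n$ commutes with the isometric $\sigma_t^\psi$, the modular group permutes weakly wandering operators; thus the join $F$ of the supports of all weakly wandering projections of $(M,G,\alpha)$ is $\sigma^\psi$-invariant, and $F\leq 1-e$ by Lemma \ref{support of state and support of ww}.

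The heart of the matter is to prove $F=1-e$. Suppose not and put $r:=1-e-F\neq 0$, a $\sigma^\psi$-invariant projection. As $e$ is the maximal projection meeting the infimum condition, there is $0\neq q\leq r$ with $\inf_n A_n(\varphi)(q)=0$, for otherwise Proposition \ref{existence of inv normal state} would produce a $G$-invariant normal state of support $\geq r\not\leq e$; faithfulness of $\varphi$ then yields a subsequence with $A_{n_k}(q)\to 0$ strongly. I would lift $q$ to $\pi_{\sigma^\psi}(q)\in N$. From $A_n(\widetilde{\alpha})(\pi_{\sigma^\psi}(q))=\pi_{\sigma^\psi}(A_n(\alpha)(q))$ and strong continuity of $\pi_{\sigma^\psi}$ one gets $A_{n_k}(\widetilde{\alpha})(\pi_{\sigma^\psi}(q))\to 0$ strongly in $N$; testing against the invariant state $\rho_N$ of the Neveu decomposition of $(N,G,\widetilde{\alpha})$ (Theorem \ref{neveu decomposition}, with projections $E_1,E_2$) gives $\rho_N(\pi_{\sigma^\psi}(q))=0$, so $E_1\pi_{\sigma^\psi}(q)E_1=0$, i.e. $\pi_{\sigma^\psi}(q)\leq E_2$. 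Now I would run the refinement step of Theorem \ref{weakly-wandering} inside the semifinite algebra $N$: applying Lemma \ref{main lem-semifinite2} to the weak*-compact set of $G$-invariant states singular on the dissipative corner (Lemma \ref{req cpt set}) kills the singular invariant part, and Theorem \ref{mean erg thm} upgrades the resulting weak-null averages to norm convergence, yielding a nonzero weakly wandering projection $Q'\leq\pi_{\sigma^\psi}(q)$ for $(N,G,\widetilde{\alpha})$.

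I would then descend $Q'$ exactly as in Lemma \ref{weakly-wandering-proj}: picking a unit vector $\xi\in L^2(\R)$ with $(1\otimes p_\xi)Q'(1\otimes p_\xi)\neq 0$ gives a nonzero weakly wandering operator $x_0'\in M_+$ with $(1\otimes p_\xi)Q'(1\otimes p_\xi)=x_0'\otimes p_\xi$. Because $Q'\leq\pi_{\sigma^\psi}(q)$, compressing yields $x_0'\leq\int|\xi(u)|^2\sigma_{-u}^\psi(q)\,du$, whence $s(x_0')\leq\bigvee_{u\in\R}\sigma_{-u}^\psi(q)\leq r$, using $q\leq r$ and the $\sigma^\psi$-invariance of $r$. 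A spectral projection $\chi_{[\epsilon,\infty)}(x_0')$ is a nonzero weakly wandering projection under $r$, contradicting $r\perp F$ and the definition of $F$; hence $F=1-e$. Finally, $\sigma$-finiteness of $M$ lets me choose countably many weakly wandering projections $\{p_i\}$ with $\bigvee_i p_i=1-e$, and $x_0:=\sum_i 2^{-i}p_i$ is the required weakly wandering operator with $s(x_0)=1-e$, the weakly wandering estimate following from the usual tail bound $\norm{A_n(x_0)}\leq\sum_{i\leq m}2^{-i}\norm{A_n(p_i)}+2^{-m}$ as in Theorem \ref{weakly-wandering}.

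I expect the main obstacle to be exactly this crossed-product detour. Since $M$ is not semifinite one cannot refine weakly wandering projections inside $M$ itself, and passing to $N$ together with the compression $1\otimes p_\xi$ smears a projection over its whole $\sigma^\psi$-orbit, so the descended support lands under $\bigvee_u\sigma_{-u}^\psi(q)$ rather than under $q$. The argument closes only because $1-e$ (and hence $r$) is $\sigma^\psi$-invariant, which confines the smeared supports to the dissipative part and makes the localized construction under $r$ legitimate; verifying this invariance and the support bound $s(x_0')\leq\bigvee_u\sigma_{-u}^\psi(q)$ is the delicate point. Here Lemma \ref{weakly-wandering-op} offers an alternative route to the same saturation, producing weakly wandering operators with genuinely $\sigma^\psi$-invariant supports should one wish to build the covering family directly.
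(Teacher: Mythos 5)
Your argument is correct, and it completes the lemma by a genuinely different global mechanism than the paper's, even though the core technology (the lift to $N=M\rtimes_{\sigma^\psi}\R$ via Lemma \ref{extention}, the Neveu decomposition of the semifinite $N$, and the compression descent through $1\otimes p_\xi$) is shared. The paper first produces a single weakly wandering projection $p$ with no location control (Lemma \ref{weakly-wandering-proj}), then \emph{saturates} it under both the group and the modular flow using the double averages $x_{k,n}$ of Lemma \ref{weakly-wandering-op}, obtaining a weakly wandering operator whose support $z_1$ is $\alpha$- and $\sigma^\psi$-invariant, and finally iterates on the invariant corner $(1-z_1)M(1-z_1)$ --- again a Markov system --- exhausting $1-e$ by a countable orthogonal family. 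You instead avoid the corner-restriction iteration and Lemma \ref{weakly-wandering-op} entirely: you prove $\sigma^\psi$-invariance of $e$ from maximality (a fact the paper uses implicitly when it asserts ``$\alpha_g(e)=e$'' and reduces to $e=0$ without argument --- your step actually patches that gap), define the saturation $F$ of all weakly wandering supports, and close with a single localized contradiction. The price is two verifications the paper's route never needs: a \emph{located} weakly wandering projection $Q'\leq\pi_{\sigma^\psi}(q)$ in $N$, which Lemma \ref{weakly-wandering-proj} does not supply but which you correctly extract by rerunning the refinement step of Theorem \ref{weakly-wandering} (the domination $Q'\leq\pi_{\sigma^\psi}(q)$ gives $\mu(A_{n_k}(Q'))\to 0$ for normal $\mu$, Lemmas \ref{req cpt set} and \ref{main lem-semifinite2} kill the singular invariant part, and Theorem \ref{mean erg thm} upgrades to norm convergence); and the smearing estimate $x_0'\leq\int|\xi(u)|^2\sigma^\psi_{-u}(q)\,du$, which is right (test against vectors $\eta\otimes\xi$) and which, combined with the $\sigma^\psi$-invariance of $r=1-e-F$, confines the descended support under $r$ as needed. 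What your route buys is a cleaner one-shot maximality argument with no need to re-verify the Markov property corner by corner; what the paper's route buys is that all supports produced are outright invariant, so no localization inside $N$ is ever required. Two minor points worth a line each in a final write-up: handle the degenerate case $E_1=0$ in $N$ (then $\pi_{\sigma^\psi}(q)\leq E_2$ trivially and the refinement uses $K_0$ as in the paper's $(2)\Rightarrow(1)$ argument), and justify the countable extraction $\bigvee_i p_i=1-e$ by evaluating against a faithful normal state, since your family $\{p_i\}$, unlike the paper's $\{z_k\}$, need not be orthogonal.
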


\begin{proof}
	Since $\alpha_g(e)=e$ for all $g \in G$. Without any loss of generality we may assume that $e=0$. Then we show that there exists a weakly wandering $x_0 \in M$ such that $s(x_0) = 1$.
	
	First note that there exists a $q \in \CP_0(M)$ such that $\inf_{n \in \N} A_n(\varphi)(q)=0$. Hence, by  Lemma \ref{SOT-inf} and Lemma \ref{weakly-wandering-proj},  there exists a $p \in \CP_0(M)$  such that $p$ is weakly wandering.
	
	Now for all $n \in \N$ consider the operator
$$x_{k, n}:= \frac{1}{m(F_k)} \frac{1}{m(K_n)}  \int_{K_n} \int_{F_k}  \alpha_g \circ \sigma_t(p) dm(g)d\lambda(t), $$ 
By Lemma \ref{weakly-wandering-op},  it follows that $x_{k, n}$ is a weakly wandering operator for all $n \in \N$ with support
	$s(x_{k, n})= \bigvee_{g \in K_n}\bigvee_{t \in F_k} \alpha_g\circ \sigma_t(p).$
 Then take the following operator
	\begin{align*}
	y  = \sum_{n, k=1}^\infty \frac{x_{k, n}}{2^{n+k}}.
	\end{align*} 
	It follows that $y$ is wandering operator with support $s(y)= \bigvee_{n, k\in \N} s(x_{k, n})$. 
Actually we have the following 
\begin{align*}
s(y)&= \bigvee_{n, k\in \N} s(x_{k, n})\\
&= \bigvee_{n, k\in \N} \bigvee_{g \in K_n}\bigvee_{t \in F_k} \alpha_g\circ \sigma_t(p)\\
&= \bigvee_{g \in G}\bigvee_{t \in \R} \alpha_g\circ \sigma_t(p), \text{ since } \cup_{n \in \N} K_n =G \text{ and } \cup_{k \in \N} F_n =\R.
\end{align*}	
Now write $z_1 = s(y)$ and thus, we note that $ \alpha_g(z_1) = z_1 $ for all $g \in G$ and $ \sigma_t(z_1) = z_1 $ for all $t \in \R$. 	
Consider the von Neumann algebra $M_{1-z} =(1-z_1) M(1-z_1) $ and let $\alpha_{1,g}$ be the restriction of $\alpha_g$ on $M_{1-z} $ for all $g \in G$. Further, observe that $ (M_{1-z}, G, \alpha_{1, g} )$  is a Markov covariant system.

Then by repeating the same argument as before we obtain a $G$-invariant projection $z_2 \in \mathcal{P}_0((1-z_1) M(1-z_1))$ and a weakly wandering operator $y_2 $ with $s(y_2)= z_2$. Repeating the similar process we obtain sequence of orthonormal projections $\{ z_1, z_2, \cdots \}$ and a sequence of weakly wandering operator $\{ y_1, y_2, \cdots \}$ such that $ s(y_k) = z_k$  and $\norm{y_k} \leq 1$ for all $k \in \N$.
	Therefore, we will have $  \bigvee_{k\in \N } z_k = 1$ and consider
	\begin{align*}
	y_0 = \sum_{k=1}^\infty \frac{ y_k}{2^k}.
	\end{align*} 
	Note that $y_0$ is a weakly wandering operator with $s(y_0) = 1$.
\end{proof}
Thus, we summarize the Neveu  decomposition  for arbitrary von Neumann algebra as follow.
	\begin{thm}[Neveu Decomposition] \label{neveu decomposition-1}
		Let $M$ be any  von Neumann algebra and $( M, G , \alpha) $ be a Markov covariant system. Then there exist two projections $e_1, e_2 \in M$ such that $e_1 + e_2 = 1$ and 
		\begin{enumerate}
			\item there exists a $G$-invariant normal state $\rho$ on $M$  with support $s(\rho) = e_1$ and
			\item there exists a weakly wandering operator $x_0 \in M$ with support $s(x_0)= e_2$.
		\end{enumerate}
		Further, $s(\rho)$ and $s(x_0)$ are  unique. 
	\end{thm}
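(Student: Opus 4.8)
The plan is to mirror the semifinite argument of Theorem \ref{neveu decomposition} but to route the weakly--wandering half through the crossed product $N = M \rtimes_{\sigma^\psi} \R$, which is semifinite; this makes the semifinite machinery available even though $M$ carries no trace. Since $M$ is $\sigma$-finite, I would first fix a f.n state $\varphi$ on $M$ and let $e$ be the maximal projection satisfying $\inf_n A_n(\varphi)(p) > 0$ for every $0 \neq p \leq e$. By Theorem \ref{supp of maximal state} (in the direction $(2)\Rightarrow(1)$) this $e$ is exactly the support of a maximal $G$-invariant normal state: there is a $G$-invariant normal state $\rho$ with $s(\rho) = e$, and every $G$-invariant normal state has support $\leq e$. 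Setting $e_1 := e$ and $e_2 := 1 - e_1$ makes part (1) of the conclusion immediate, and $e_1 + e_2 = 1$ holds by construction.

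For part (2) I would invoke the construction carried out in the lemma immediately preceding this theorem, which produces a weakly wandering operator $x_0 \in M_+$ with $s(x_0) = 1 - e = e_2$. The essential content there — and the place where the general von Neumann algebra case genuinely departs from the tracial one — is the passage to $N$: the Markov hypothesis $\sigma_t^\psi \circ \alpha_g = \alpha_g \circ \sigma_t^\psi$ lets Lemma \ref{extention} lift each $\alpha_g$ to a positive contraction $\widetilde{\alpha}_g$ on the semifinite algebra $N$. Lemma \ref{weakly-wandering-proj} then transports a weakly wandering projection back down to $M$ by compressing $x_0 \in N \subseteq M \otimes \CB(L^2(\R))$ with a rank-one projection $1 \otimes p_\xi$, and Lemma \ref{weakly-wandering-op} averages it against $\sigma^\psi$ and the F\o lner net to produce weakly wandering operators whose supports are both $G$- and $\sigma^\psi$-invariant. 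Exhausting $1-e$ by a maximal orthogonal family of such invariant supports — countable by $\sigma$-finiteness — and forming a weighted sum $x_0 = \sum_k 2^{-k} y_k$ yields the required operator of support exactly $e_2$.

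For uniqueness I would appeal to Lemma \ref{support of state and support of ww}: the support of any $G$-invariant normal state is orthogonal to the support of any weakly wandering operator. Hence the support of the maximal invariant state and the support of a maximal weakly wandering operator are forced to be complementary, so $s(\rho)$ and $s(x_0)$ coincide with the uniquely determined projections $e_1$ and $e_2$; any competing decomposition would contradict the maximality built into the choice of $e$.

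The main obstacle is conceptual rather than computational, and it is already absorbed into the preceding lemmas. Without a trace on $M$ one cannot run the weakly--wandering argument of Theorem \ref{weakly-wandering} directly, so the genuine work lies in checking that the lift $\widetilde{\alpha}$ to the semifinite crossed product $N$ is well defined and contractive (Lemma \ref{extention}) and that weakly wandering behavior descends faithfully from $N$ to $M$ under compression on $L^2(\R)$ (Lemma \ref{weakly-wandering-proj}). Once these reductions are secured, the theorem itself is a synthesis of Theorem \ref{supp of maximal state}, the preceding lemma, and Lemma \ref{support of state and support of ww}.
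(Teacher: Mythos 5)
Your proposal is correct and follows the paper's own proof essentially verbatim: part (1) via the maximal projection $e$ and Theorem \ref{supp of maximal state}, part (2) via the lift to the semifinite crossed product (Lemma \ref{extention}), descent of weakly wandering behavior under compression by $1\otimes p_\xi$ (Lemma \ref{weakly-wandering-proj}), the averaging construction giving $G$- and $\sigma^\psi$-invariant supports (Lemma \ref{weakly-wandering-op}), exhaustion of $1-e$, and uniqueness from Lemma \ref{support of state and support of ww}. The only cosmetic difference is that you exhaust $1-e$ by a maximal orthogonal family of invariant supports where the paper runs an equivalent iterative construction producing the sequence $z_1, z_2, \ldots$; the two are interchangeable.
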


\begin{rem}
In this section, we have obtained the Neveu decomposition for Markov covariant system  $(M, G, \alpha)$ associated to the actions of a  amenable group $G$ on a  von Neumann algebra $M$. We note that if there exists f.n semifinite weight $\psi$ on $M$ such that $ \psi \circ \alpha_g = \psi $, then it is straightforward to check that $ \sigma^\psi_t \circ \alpha_g =  \alpha_g \circ \sigma^\psi_t $, for all $g \in G$ and $t \in \R$.  Hence, it is a Markov covariant system. We like to highlight that it  is natural to assume  $(M, G, \alpha)$ preserving  a f.n semifinite weight $\psi$ on $M$. Possibly, most   covariant system $(M, G, \alpha)$ preserve  a f.n semifinite weight on $M$. But, right now neither we have a proof nor a  reference. Further, we note that automorphisms on $\CB(\CH)$  always preserve the f.n semifinite trace on $\CB(\CH)$.  Furthermore, note that the shift automorphism $ \alpha: \CB(L^2(\R, \lambda)) \rar \CB(L^2(\R, \lambda ))$ where $\lambda$ is Lebesgue measure on $ \R$,   does not preserve any state and but preserve the trace on $\CB(L^2( \R, \lambda))$. 

Even in the classical setting,  given a $\sigma$-finite measure space $( X, \mu)$ and a transformation $T$ defined on it, the problem of finding a finite measure $\nu$ invariant under T is studied extensively  by many authors. We refer to \cite{Hajian1964}, \cite{ito1964invariant}, \cite{Hajian1968/1969}, \cite{hajian1972invariant} and references therein.  Thus, Markov condition on $(M, G, \alpha)$ is natural and very mild compare to the  state preserving condition.

\end{rem}

	\section{Example}
In this section we discuss some examples.  Here we assume $G$ to be an amenable semigroup or a group and $ (K_n)$ be a F\o lner sequence.
Let $ ( \CH, U, G)$ be a non-commutative dynamical system,  i.e, 
$(U_g)$ is a strongly continuous unitary presentation of $G$ on the Hilbert space $\CH$.  We begin with following definition.
\begin{defn}
	Let $G$ be a amenable semigroup and $ ( \CH, U, G)$ be a non-commutative dynamical system. It is called weakly mixing if 	
	for all $\xi,\eta \in \CH$ one  has 
	\begin{align*}
	\frac{1}{m(K_n)} \int_{K_n} \abs{\langle U_g \xi, \eta \rangle }^2 dm(g)  \rightarrow 0 \text{ as } n \rightarrow \infty.
	\end{align*}		
			
\end{defn}

	Let $\alpha$ be a automorphism on $\CB(\CH)$. Then there exists a unitary $ U \in \CB(\CH)$ such that $ \alpha(x) = Ux U^*$ for all $ x \in \CB(\CH)$. Decompose $\CH = \CH_c \oplus \CH_{wm}$, where $\CH_c $ and $ \CH_{wm}$ are compact (equivalently,  it the eigenspace of $U$) and weakly mixing part of $U$ respectively. Let $P_c$ and $P_{wm}$ are two projection onto the subspaces 
	$\CH_c $ and $ \CH_{wm}$ respectively. 
\begin{thm}
Let $ \alpha $ be an automorphism on $ \CB(\CH)$. Then there exists an $\alpha$-invariant  normal state $\varphi_c$ on $ \CB(\CH)$ such that $s(\varphi_c) = P_c$ and  a weakly wandering operator $x_{wm} \in \CB(\CH)$ such that $s(x_{wm}) = P_{wm}$. 
\end{thm}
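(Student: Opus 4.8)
The plan is to recognize this statement as the Neveu decomposition for the type I factor $\CB(\CH)$ and then identify the abstract projections $e_1,e_2$ with $P_c,P_{wm}$. Concretely, let $G=\Z_+$ act on $\CB(\CH)$ by $\alpha_n=\alpha^n$, with F\o lner sets $K_n=\{0,\dots,n-1\}$ and averages $A_n(x)=\frac1n\sum_{k=0}^{n-1}\alpha^k(x)$. Since $\CH$ is separable, $\CB(\CH)$ is a $\sigma$-finite semifinite (type I) von Neumann algebra carrying a f.n.\ state, and each $\alpha_n$ is a trace-preserving unital $*$-automorphism, so $(\CB(\CH),\Z_+,\alpha)$ is a non-commutative dynamical system in the sense of Definition \ref{action on Banach sp}. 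Theorem \ref{neveu decomposition} then yields projections $e_1,e_2$ with $e_1+e_2=1$, an invariant normal state $\rho$ with $s(\rho)=e_1$ (maximal among invariant normal states, by Theorem \ref{weakly-wandering}), and a weakly wandering operator $x_0$ with $s(x_0)=e_2$. It therefore suffices to prove $e_1=P_c$; then $e_2=1-P_c=P_{wm}$ because $\CH_{wm}=\CH_c^{\perp}$ (Koopman--von Neumann), and we may take $x_{wm}:=x_0$.

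To establish $e_1=P_c$ I would prove two inclusions. For $e_1\le P_c$: any $\alpha$-invariant normal state $\nu$ has the form $\nu=\Tr(D_\nu\,\cdot\,)$ for a density operator $D_\nu\in L^1(\CB(\CH),\Tr)_+$ by Theorem \ref{basic thm in Lp}(iii); invariance $\Tr(D_\nu UxU^*)=\Tr(D_\nu x)$ for all $x$ forces $U^*D_\nu U=D_\nu$, i.e.\ $D_\nu$ commutes with $U$. As $D_\nu$ is compact, each of its nonzero spectral projections is finite-dimensional and $U$-invariant, hence spanned by eigenvectors of $U$; thus $s(\nu)=s(D_\nu)\le P_c$, and maximality of $e_1$ gives $e_1\le P_c$. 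For $P_c\le e_1$: choose an orthonormal basis $\{\xi_j\}$ of $\CH_c$ consisting of eigenvectors of $U$ (possible since $\CH_c$ is the closed span of eigenvectors and $\CH$ is separable) and set $D_c=\sum_j 2^{-j}p_{\xi_j}$ and $\varphi_c=\Tr(D_c\,\cdot\,)$, where $p_{\xi_j}$ is the rank-one projection onto $\C\xi_j$. Then $D_c$ commutes with $U$, so $\varphi_c$ is an $\alpha$-invariant normal state, and $s(\varphi_c)=P_c$ since every coefficient is strictly positive. Hence $P_c=s(\varphi_c)\le e_1$, and combining the two inclusions gives $e_1=P_c$.

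Alternatively, the role of the weak-mixing hypothesis can be made explicit when producing $x_{wm}$ directly: for a unit vector $\xi\in\CH_{wm}$, the average $A_n(p_\xi)=\frac1n\sum_{k=0}^{n-1}p_{U^k\xi}$ satisfies $\|A_n(p_\xi)\|^2\le \frac1{n^2}\Tr(T_n^2)$, where $T_n=[\,\inner{U^{j-k}\xi}{\xi}\,]_{j,k=0}^{n-1}$ is the associated Gram matrix; since $\Tr(T_n^2)=\sum_{|m|<n}(n-|m|)\,|\inner{U^m\xi}{\xi}|^2\le n\sum_{|m|<n}|\inner{U^m\xi}{\xi}|^2$, the weak-mixing condition (applied with $\eta=\xi$) gives $\|A_n(p_\xi)\|\to0$, so $p_\xi$ is weakly wandering. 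Assembling $\sum_j 2^{-j}p_{\eta_j}$ over an orthonormal basis $\{\eta_j\}$ of $\CH_{wm}$, exactly as in the proof of Theorem \ref{weakly-wandering}, yields $x_{wm}$ with $s(x_{wm})=P_{wm}$. I expect the main obstacle to be the first inclusion $e_1\le P_c$: controlling the support of an \emph{arbitrary} invariant normal state and arguing that invariance confines it to the discrete spectral subspace $\CH_c$, together with the bookkeeping that matches the paper's ``weakly mixing part'' $\CH_{wm}$ with $\CH_c^{\perp}$.
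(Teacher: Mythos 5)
Your proposal is correct, but it takes a genuinely different route from the paper's own proof, which never invokes Theorem \ref{neveu decomposition}: the paper builds both pieces by hand, constructing the invariant state exactly as in your second inclusion ($\varphi_c=\sum_i \epsilon_i\langle\,\cdot\,e_i,e_i\rangle$ over an eigenbasis of $U$ in $\CH_c$), and for the mixing part quoting Krengel's theorem \cite{krengel1972weakly} that weakly wandering vectors are dense in $\CH_{wm}$, extracting for each such vector $\eta$ a subsequence with $U^{n_i}\eta\perp U^{n_j}\eta$, deducing $\alpha^{n_i}(P_\eta)\perp\alpha^{n_j}(P_\eta)$, running the standard $\varepsilon/k$ averaging estimate to get $\norm{A_n(P_\eta)}\to 0$, and summing $\sum_k 2^{-(k+1)}p_{\eta_k}$ over the dense family to reach support $P_{wm}$. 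You instead specialize the abstract Neveu decomposition to $(\CB(\CH),\Z_+,\alpha)$ --- legitimate, since $\CB(\CH)$ is semifinite and, $\CH$ being separable, carries a f.n.\ state as Theorem \ref{weakly-wandering} requires --- and then identify $e_1=P_c$; the inclusion $e_1\leq P_c$ (every invariant density operator commutes with $U$, so its finite-rank spectral projections reduce $U$ and are spanned by eigenvectors) appears nowhere in the paper and is the genuine added content of your reduction, pinning down the Neveu projections spectrally as $e_1=P_c$, $e_2=P_{wm}$. Your alternative Gram-matrix argument is also sound: writing $\sum_{k<n} p_{U^k\xi}=VV^*$ with $V^*V=T_n$ gives $\norm{A_n(p_\xi)}=\frac{1}{n}\norm{T_n}\leq \frac{1}{n}\sqrt{\Tr(T_n^2)}$, and it is in fact both more elementary and stronger than the paper's route, since via Wiener's lemma it shows \emph{every} $p_\xi$ with $\xi\in\CH_{wm}$ is weakly wandering, dispensing with Krengel's density theorem and the orthogonal-subsequence trick, and letting you sum over an orthonormal basis of $\CH_{wm}$ rather than a dense sequence. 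The only cosmetic points: normalize $D_c$ so that $\varphi_c$ is a state, and note the statement degenerates when $P_c=0$ (no state has support $0$), a caveat the paper's formulation shares.
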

\begin{proof}
		Let $\{ e_i\}_{i \in \N}$ be a orthonormal basis of $\CH_c $ consists of eigenvectors of $U$ and $\{ \epsilon_i\}_{i \in I}$ be a set of positive numbers such that $\sum_{i \in I} \epsilon_i = 1$. Then consider $$ \varphi_c(x) =  \sum_{i \in I} \epsilon_i \langle x e_i, e_i \rangle, \text{ for all } x \in \CB(\CH)$$
		Clearly, we have  $ \varphi_c\circ \alpha = \varphi_c$.

		Now we note that  $ U|_{ \CH_{wm}} $ is weakly mixing. Then by \cite[Theorem 1.1]{krengel1972weakly} weakly wandering vectors of  $ U|_{ \CH_{wm}} $  are dense in  $  \CH_{wm} $. Let $\{ \eta_k : k \in \N \} \subseteq \CH_{wm} $ be a dense collection of  weakly wandering vectors of   $ U|_{ \CH_{wm}}$.  Let $ \eta \in \{ \eta_k : k \in \N \}$, then there exists a subsequence  $ n_1 < n_2 < \cdots $ such that $ U^{n_i}\eta \perp U^{n_j} \eta $ for $ i \neq j$. Now we show that $ \alpha^{n_i}(P_\eta ) \perp  \alpha^{n_j}(P_\eta )$. Indeed observe that $ \alpha^{n_i}(P_\eta )  = P_{U^{n_i}\eta} $ and $ \alpha^{n_j}(P_\eta )  = P_{U^{n_j}\eta} $. Hence, $ \alpha^{n_i}(P_\eta ) \perp  \alpha^{n_j}(P_\eta )$ for all $ i \neq j$.  Now it is standard to show that $P_\eta $ is weakly wandering projection. 
		 Indeed, let $ A_n(P_\eta) = \frac{1}{n}\sum_{ k \in K_n} \alpha^k(P_\eta)$ where $ K_n = \{ 0, 1, 2, \cdots, n-1 \}$, then for $n, k \in \N$, we have
		\begin{align*}
		\norm{A_n ((P_\eta))}
		&= \frac{1}{k} \norm{k \cdot A_n (P_\eta)}\\
		&\leq \frac{1}{k} \sum_{j=1}^{k} \norm{A_n (P_\eta) - A_n(\alpha^{n_j}(P_\eta))} + \frac{1}{k} \norm{ \sum_{j=1}^{k} A_n(\alpha^{n_j}(P_\eta))}\\
		& \leq \frac{1}{k} \sum_{j=1}^{k}\frac{ \abs{(K_n \Delta K_n n_j)}}{n} + \frac{1}{k} \norm{ A_n ( \sum_{j=1}^{k} \alpha^{n_j}(P_\eta))} \\
			& \leq \frac{1}{k} \sum_{j=1}^{k}\frac{ \abs{(K_n \Delta K_n n_j)}}{n} + \frac{1}{k} \norm{ \sum_{j=1}^{k} P_{U^{n_j}(\eta)} }\\
				& = \frac{1}{k} \sum_{j=1}^{k}\frac{ \abs{(K_n \Delta K_n n_j)}}{n} + \frac{1}{k} \norm{  P_{ \sum_{j=1}^{k}U^{n_j}(\eta)} }, \text{as }  U^{n_i}\eta \perp U^{n_j} \eta, ~ i \neq j.\\
				& \leq \frac{1}{k} \sum_{j=1}^{k}\frac{ \abs{(K_n \Delta K_n n_j)}}{n} + \frac{1}{k} 
		\end{align*}
		Now let $\epsilon >0$. We choose $k \in \N$ such that $\frac{1}{k}< \frac{\epsilon}{2}$. Since $ ( K_n)$ is a F\o lner sequence, so for each  $j \in \{ 1, \ldots, k \}$, there exists a  $N_j \in \N$ such that 
		\begin{align*}
		\frac{\abs{(K_n \Delta K_n n_j)}}{n} < \frac{\epsilon}{2}, \text{ for all } n \geq N_j.
		\end{align*}
		Choose $N:= \max \{ N_1, \ldots, N_k, k \} \in \N$ and for all $n \geq N$, note that 
		\begin{align*}
		\norm{A_n(P_\eta)} \leq \epsilon.
		\end{align*}
Now we set $ x_{wm} = \sum_{k=1}^{\infty}\frac{1}{2^{k+1}} p_{\eta_k}.$ It is straightforward to note that $ x_{wm} $ is weakly wandering operator with $s(x_{wm}) = P_{wm}$. 
	\end{proof}
	
	\begin{rem}
		We note that if $ \eta \in \CH_{wm}$, then it is straightforward to check that $ \text{inf}_n A_n(P_\eta)(\varphi) = 0 $,  for  any faithful normal state in $\CB(\CH)_*$ and since $P_\eta$  is finite rank projection, so $ \mu( A_n(P_\eta)) = 0$ for all singular linear functional $\mu$ in $\CB(\CH)^*$.  Then it follows that   $ P_\eta$ is weakly wandering operator.  
	\end{rem}

For the next result assume that $G$ is an  amenable group or a semigroup with  a F\o lner sequence $\{ K_n: n \in \N\}$.

\begin{defn}

	Let $ ( \CH, U, G)$ be a  non-commutative dynamical system.  A vector $\xi \in \CH$ is called wandering if following holds 
\begin{align*}
\frac{1}{m(K_n)} \int_{K_n} U_g \xi ~ dm(g)  \rightarrow 0 \text{ as } n\rightarrow \infty \text{ in } \norm{ \cdot }.
\end{align*}

\end{defn}

Following is a weak version of Krengel Theorem. 
\begin{thm}
Let $ ( \CH, U, G)$ be a weakly mixing  non-commutative dynamical system. Then every elements of $\CH$ are weakly wandering for 	$ ( \CH, U, G)$.  
\end{thm}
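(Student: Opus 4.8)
The plan is to reduce the statement to the mean ergodic theorem (Theorem \ref{mean erg thm}) combined with the observation that weak mixing rules out nonzero invariant vectors. Writing $A_n\xi := \frac{1}{m(K_n)}\int_{K_n} U_g\xi\, dm(g)$ and setting $\CH^G := \{\eta \in \CH : U_g\eta = \eta \text{ for all } g \in G\}$, the goal is exactly to show $\norm{A_n\xi} \to 0$ for every $\xi \in \CH$.

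First I would establish that the ergodic averages converge in norm to a $G$-invariant vector. Since $(\CH, U, G)$ is a non-commutative dynamical system and $\CH$ is reflexive, for each $\xi$ the sequence $\{A_n\xi\}$ is bounded (as $\norm{A_n} \le \sup_{g} \norm{U_g} = 1$), hence admits a weakly convergent subsequence $A_{n_k}\xi \rightharpoonup \bar\xi$. The standard identity $U_h A_n\xi - A_n\xi = \frac{1}{m(K_n)}\int_{K_n}(U_{hg} - U_g)\xi\, dm(g)$ together with the F\o lner condition (P2) shows that $\norm{U_h A_n\xi - A_n\xi} \to 0$ for each $h \in G$; passing to the weak limit along $(n_k)$ gives $U_h\bar\xi = \bar\xi$, so that $\bar\xi \in \CH^G$. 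This verifies hypothesis (2) of Theorem \ref{mean erg thm}, whence hypothesis (1) holds: $A_n\xi \to \bar\xi$ in norm with $\bar\xi \in \CH^G$.

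Next I would show that weak mixing forces $\CH^G = \{0\}$. Indeed, if $\eta \in \CH^G$, then $\inner{U_g\eta}{\eta} = \inner{\eta}{\eta} = \norm{\eta}^2$ for every $g \in G$, so that $\frac{1}{m(K_n)}\int_{K_n}\abs{\inner{U_g\eta}{\eta}}^2 dm(g) = \norm{\eta}^4$ for all $n$. By the weak mixing hypothesis this average tends to $0$, forcing $\norm{\eta} = 0$. Hence the only $G$-invariant vector is $0$, so the limit $\bar\xi$ produced in the previous step must vanish, and therefore $A_n\xi \to 0$ in norm for every $\xi \in \CH$; that is, every vector of $\CH$ is wandering for $(\CH, U, G)$.

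The only genuine content lies in the first step, namely certifying that the norm limit of the averages exists and is invariant, which is precisely where reflexivity of $\CH$ and the F\o lner property enter through Theorem \ref{mean erg thm}. The weak mixing hypothesis then serves merely to annihilate the invariant part, and the remaining manipulations are routine; so I expect no serious obstacle beyond a careful bookkeeping of the passage to weak limits in the first paragraph.
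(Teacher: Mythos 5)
Your proof is correct, but it takes a genuinely different route from the paper's. You stay entirely at the Hilbert-space level: reflexivity gives a weak subsequential limit $\bar\xi$ of the bounded averages, the left F\o lner property (P2) gives $\norm{U_hA_n\xi - A_n\xi}\to 0$ and hence $\bar\xi\in\CH^G$, Theorem \ref{mean erg thm} ((2)$\Rightarrow$(1)) upgrades this to norm convergence, and weak mixing annihilates $\CH^G$, since an invariant $\eta$ would make $\frac{1}{m(K_n)}\int_{K_n}\abs{\langle U_g\eta,\eta\rangle}^2dm(g)=\norm{\eta}^4$ constant rather than null. (A minor point: Theorem \ref{mean erg thm} is stated for a non-commutative dynamical system in the sense of Definition \ref{action on Banach sp}, which includes a positivity condition that a Hilbert space lacks; but its proof only uses (C), (UB) and the F\o lner property, and the paper itself calls $(\CH,U,G)$ a non-commutative dynamical system, so this is harmless.) The paper instead lifts the problem to $(\CB(\CH),\alpha,G)$ with $\alpha_g=\Ad U_g$: writing a faithful normal state as $\varphi=\sum_i\delta_i\langle\,\cdot\,\eta_i,\eta_i\rangle$, weak mixing gives $\varphi(A_n(p_\xi))\to 0$ for the rank-one projection $p_\xi$; since singular functionals vanish on finite-rank projections, the machinery of Theorem \ref{weakly-wandering} (which rests on Lemma \ref{main lem-semifinite2}, Lemma \ref{req cpt set} and, ultimately, Theorem \ref{mean erg thm} again) yields $\norm{A_n(p_\xi)}\to 0$, and the vector statement follows from the estimate $\norm{A_n\xi}^2\leq\norm{A_n(p_\xi)}$ obtained by testing $\langle A_n(p_\xi)\eta,\eta\rangle = \frac{1}{m(K_n)}\int_{K_n}\abs{\langle U_g\xi,\eta\rangle}^2dm(g)$ at $\eta=A_n\xi$. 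Your argument is more elementary and cleanly separates the two ingredients (mean ergodic theorem plus triviality of the fixed space); it could even be shortened, since Cauchy--Schwarz applied directly to the weak-mixing condition already gives $A_n\xi\to 0$ weakly, after which (2)$\Rightarrow$(1) of Theorem \ref{mean erg thm} finishes with no need for the invariance bookkeeping. The paper's detour, by contrast, proves more along the way --- the rank-one projections $p_\xi$ are weakly wandering \emph{operators} in $\CB(\CH)$, which implies but is not implied by the wandering-vector statement --- and serves the section's stated purpose of illustrating the abstract Neveu-decomposition machinery on a concrete example.
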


\begin{proof}
Consider $ \alpha_g (x) = U_g x U_g^*$ for all $ g \in G$. Let $ x \in \CB(\CH)$,  then we note the following 
\begin{align*}
\alpha_g \circ \alpha_h (x) &= \alpha_g (\alpha_h (x) )= \alpha_g ( U_h xU_h^*)\\ &= U_g U_h x U_h^* U_g^*  = U_{gh} x U_{gh}^* 
\end{align*}
Thus, it follows that $(\CB(\CH), \alpha, G)$ is a non-commutative dynamical system. 
Suppose $\varphi$ be a f.n state on $\CB(\CH)$. Then we wish to show that for every non-zero projection $ p \in \CB(\CH)$ with $\tau(p) < \infty$, we have $\inf_{n \in \N} \varphi(A_n(p))=0$, where $\tau$ is semifinite trace on $\CB(\CH)$.  Indeed, 
 we note that $ \varphi$ can be written as $ \varphi(\cdot) = \sum_{i=1}^{\infty} \delta_i \langle (\cdot) \eta_i, \eta_i \rangle $ where $ \sum_i \delta_i = 1 $ and $\eta_i \in \CH $ with $\norm{\eta_i} = 1$.	
Now assume that $p$ is a projection onto $\C\xi$ for some $\xi \in \CH$. Then for all $n \in \N$, observe that 
\begin{align*}
\varphi(A_n(p))&= \sum_{i=1}^{\infty} \delta_i \langle (A_n(p)) \eta_i, \eta_i \rangle\\ 
&= \sum_{i=1}^{\infty} \delta_i \frac{1}{m(K_n)} \int_{K_n}  \langle \alpha_g(p)\eta_i, \eta_i \rangle dm(g)\\ 
&= \sum_{i=1}^{\infty} \delta_i \frac{1}{m(K_n)} \int_{K_n}  \langle (U_g p U_g^* )\eta_i, \eta_i \rangle dm(g)\\ 
&= \sum_{i=1}^{\infty} \delta_i \frac{1}{m(K_n)} \int_{K_n}  \langle  \langle  U_g^*\eta_i, \xi\rangle  U_g\xi,  \eta_i \rangle dm(g)\\ 
&= \sum_{i=1}^{\infty} \delta_i \frac{1}{m(K_n)} \int_{K_n}  \abs{\langle  U_g\xi,  \eta_i \rangle}^2 dm(g)\\ 
\end{align*}
As $\sum \delta_i< \infty$, it follows that $\lim_{n \rightarrow \infty } \varphi(A_n(p)) = 0 $. Then for any finite rank projection the result follows immediately. We note that if $p$ is any finite rank projection then 
$ \varphi(p)= 0 $ for all singular linear functional on $\CB(\CH)$. Then it follows from the proof of  $(1) \implies (2)$ of  Theorem \ref{weakly-wandering} that $p$ is weakly wandering for  $(\CB(\CH), \alpha, G)$. Let $\xi \in \CH$ and suppose $ p_\xi$ is projection onto $\C\xi$. Then  for all $\eta \in \CH$,  note that 
\begin{align*}
\langle  A_n(p_\xi)\eta, \eta \rangle   &=  \frac{1}{m(K_n)} \int_{K_n}  \langle (U_g p_\xi U_g^* )\eta, \eta \rangle dm(g)\\ 
&= \frac{1}{m(K_n)} \int_{K_n}  \langle  \langle  U_g^*\eta, \xi\rangle  U_g\xi,  \eta \rangle dm(g)\\ 
&= \frac{1}{m(K_n)} \int_{K_n}  \abs{\langle  U_g\xi,  \eta \rangle}^2 dm(g).\\ 
\end{align*} 
Thus, follows that  if $ \lim_{n \to \infty} \norm{ A_n(p_\xi) } = 0 $, then it implies 
 \begin{align*}
\frac{1}{m(K_n)} \int_{K_n} U_g \xi ~ dm(g)  \rightarrow 0 \text{ as } n\rightarrow \infty \text{ in } \norm{ \cdot }.
\end{align*} 
Hence, every vector of $\CH$   is weakly wandering for $ (\CH, U, G)$.  	
\end{proof}

	\section{Pointwise Ergodic theorem}
	
	In this section we study the pointwise ergodic theorem for non-commutative dynamical system $(L^1(M, \tau), G, \gamma)$. The main results of this section follows from \cite{Bik-Dip-neveu} and \cite{bikram2023noncommutative}. 
	To prove  the mean ergodic theorem, first it was required to prove a version of mean ergodic theorem in the GNS space lavel. For that   in addition to $\alpha$ being positive maps, it was assumed that for each $g\in G$, the map  $\gamma_g^*: M \rar M$  satisfies the Schwarz condition, i.e, $ \gamma_g^*(x)^* \gamma_g^*(x) \leq \gamma_g^*(x^*x)$ for all $ x\in M$. In this section, we notice that the Schwarz map condition is redundant, hence improving the results for the actions by positive maps. We begin with recalling the following known results.
	
	\begin{prop}\cite[Paulsen Prop 3.3, ex 3.4, pp-40]{Paulsen:2002tj}\label{cp imply schwarz}
		Let $M, N$ be two unital $C^*$-algebras, and $\phi: M \to N$ be a $2$-positive map. Then $\phi(a)^* \phi(a) \leq \norm{\phi(1)}\phi(a^*a)$ for all $a \in M$.
	\end{prop}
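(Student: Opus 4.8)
The plan is to exploit the $2$-positivity of $\phi$ through the standard $2\times 2$ matrix trick that yields the Kadison--Schwarz inequality. First I would form the element
\begin{align*}
\begin{pmatrix} 1 & a \\ a^* & a^*a \end{pmatrix} = \begin{pmatrix} 1 \\ a^* \end{pmatrix}\begin{pmatrix} 1 & a \end{pmatrix} \in M_2(M),
\end{align*}
which is positive since it has the form $v^*v$. Because $\phi$ is $2$-positive, its amplification $\phi_2 := \phi \otimes \mathrm{id}_{M_2}$ is a positive map from $M_2(M)$ to $M_2(N)$, so applying it gives
\begin{align*}
\begin{pmatrix} \phi(1) & \phi(a) \\ \phi(a)^* & \phi(a^*a) \end{pmatrix} \geq 0 \quad \text{ in } M_2(N).
\end{align*}

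Next I would replace the $(1,1)$-entry by a scalar. Since $\phi(1) \geq 0$ we have $\phi(1) \leq \norm{\phi(1)}\,1$, so the matrix obtained by putting $\norm{\phi(1)}\,1$ in the top-left corner dominates the one above (their difference is a positive diagonal element) and is therefore still positive. Representing $N$ faithfully on a Hilbert space $\mathcal{K}$, this positivity is tested against vectors $\xi \oplus \eta \in \mathcal{K}\oplus \mathcal{K}$. The final step is the elementary Schur-complement computation: assuming $\phi(1)\neq 0$ and specializing $\xi = -\norm{\phi(1)}^{-1}\phi(a)\eta$ collapses the associated quadratic form to
\begin{align*}
\langle \phi(a^*a)\eta,\eta\rangle - \norm{\phi(1)}^{-1}\norm{\phi(a)\eta}^2 \geq 0,
\end{align*}
which, holding for every $\eta$, rearranges at once to the claimed operator inequality $\phi(a)^*\phi(a) \leq \norm{\phi(1)}\phi(a^*a)$.

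The only point needing care is the degenerate case $\phi(1)=0$: then $\norm{\phi(1)}=0$ and the $(1,1)$-block of the positive matrix above vanishes, which forces the off-diagonal entry $\phi(a)$ to vanish as well, since a positive operator matrix with a zero diagonal block has the corresponding row and column equal to zero. In that situation both sides of the asserted inequality are zero and it holds trivially. I do not anticipate any genuine obstacle: the result is classical, and once $2$-positivity is invoked to pass to $M_2(N)$ the remainder is essentially mechanical, the only subtlety being the bookkeeping in the degenerate case and the passage to a faithful representation to run the vector computation.
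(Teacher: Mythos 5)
Your proof is correct, and it is essentially the argument the paper relies on: the paper states this proposition without proof, citing Paulsen (Prop.\ 3.3 and Ex.\ 3.4), whose proof is exactly your $2\times 2$ matrix amplification of $\begin{pmatrix} 1 & a \\ a^* & a^*a \end{pmatrix} \geq 0$ followed by replacing $\phi(1)$ by $\norm{\phi(1)}1$ and extracting the operator inequality via the Schur-complement (there phrased as the lemma that $\begin{pmatrix} c\,1 & B \\ B^* & D \end{pmatrix} \geq 0$ forces $B^*B \leq cD$). Your explicit handling of the degenerate case $\phi(1)=0$ via the vanishing of the off-diagonal entry is also sound.
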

	
	\begin{prop}\cite[Theorem 3.11]{Paulsen:2002tj}\label{pstv imply cp}
		Let $M$ be the commutative $C^*$ algebra $C(L)$ and $N$ be another $C^*$ algebra. Then if $\phi: C(L) \to N$ be a positive map, then $\phi$ is completely positive.
	\end{prop}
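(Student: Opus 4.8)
The plan is to reduce complete positivity to the positivity of each finite amplification $\phi_n := \phi \otimes \mathrm{id}_{M_n(\C)} \colon M_n(C(L)) \to M_n(N)$ and then to exploit the commutativity of $C(L)$ through a partition-of-unity approximation. First I would invoke the canonical identification $M_n(C(L)) \cong C(L, M_n(\C))$, under which a matrix $F = [f_{ij}]$ is positive exactly when $F(x) = [f_{ij}(x)] \geq 0$ in $M_n(\C)$ for every $x \in L$. Under the same identification $\phi_n$ acts by $[f_{ij}] \mapsto [\phi(f_{ij})]$, so it suffices to prove that $F \geq 0$ forces $\phi_n(F) \geq 0$ in $M_n(N)$, and then let $n$ range over $\N$.

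The decisive step is a uniform approximation of $F$ by finite sums of elementary positive tensors. Assuming $L$ compact, fix $\epsilon > 0$; since $F$ is continuous I would select a finite open cover $\{U_k\}$ of $L$ and points $x_k \in U_k$ with $\norm{F(x) - F(x_k)} < \epsilon$ for all $x \in U_k$, together with a subordinate partition of unity $\{g_k\} \subseteq C(L)_+$ satisfying $\sum_k g_k = 1$. Setting $P_k := F(x_k) \geq 0$ in $M_n(\C)$, one checks that $\norm{F - \sum_k g_k \otimes P_k} < \epsilon$. This is precisely where compactness of $L$ and commutativity of $C(L)$ are used, and I expect it to be the main obstacle, since everything hinges on realizing an arbitrary positive matrix-valued function as a near-sum of positive elementary tensors.

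It then remains to evaluate $\phi_n$ on an elementary tensor. For $g \in C(L)_+$ and $P = [p_{ij}] \in (M_n(\C))_+$ one has $\phi_n(g \otimes P) = [\phi(g\, p_{ij})] = [p_{ij}\,\phi(g)] = \phi(g) \otimes P$. By positivity of $\phi$, $\phi(g) \geq 0$ in $N$, and writing $\phi(g) = a^*a$, $P = b^*b$ gives $\phi(g) \otimes P = (a \otimes b)^*(a \otimes b) \geq 0$ in $N \otimes M_n(\C) = M_n(N)$. Hence $\phi_n\bigl(\sum_k g_k \otimes P_k\bigr) \geq 0$ as a finite sum of positive elements. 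Since $\phi_n$ is bounded and the positive cone of $M_n(N)$ is norm-closed, letting $\epsilon \to 0$ yields $\phi_n(F) \geq 0$. As $n \in \N$ and $F \geq 0$ were arbitrary, $\phi$ is completely positive.

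For a non-unital $C(L)$, i.e. $L$ merely locally compact, I would run the same argument after passing to the unitization $C(L)^{\sim} \cong C(L^{+})$ with $L^{+}$ the one-point compactification, or equivalently by working with an approximate identity, so that the partition-of-unity approximation of the previous paragraph applies verbatim.
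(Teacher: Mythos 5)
Your proof is correct and is essentially the argument of the cited source itself: the paper states this result without proof, quoting \cite[Theorem 3.11]{Paulsen:2002tj}, and Paulsen's proof is exactly your partition-of-unity approximation of a positive $F \in M_n(C(L)) \cong C(L, M_n(\C))$ by sums $\sum_k g_k \otimes F(x_k)$ of positive elementary tensors, followed by norm-closedness of the positive cone. Note also that the unital (compact $L$) case you treat first is all the paper needs, since it applies the proposition to the unital commutative von Neumann algebra $\mathrm{VN}(x)$ in Remark \ref{pstv maps are kernel}.
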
 
	
 We note that given  a non-commutative dynamical system $ ( M_*, G, \gamma)$, we have dual non-commutative dynamical system $( M, G, \alpha)$, where  $\alpha = ( \alpha_g)_{g \in G}$ and $ \alpha_g  = \gamma_g^*$ for all $ g \in G$. It is to note that $( M, G, \alpha)$ becomes a non-commutative  dynamical system. In the sequel, we use both in the interest of presentation.

		\begin{rem}\label{pstv maps are kernel}
		Let $(M, G, \alpha)$ be a non-commutative dynamical system. Then for all $g \in G$,  $\alpha_g$ is a positive contraction for all $g \in G$. Therefore, $\norm{\alpha_g(1)} \leq 1$ for all $g \in G$. Let $x \in M_s$ and consider the abelian von Neumann algebra generated by $x$, denoted by VN($x$). Then by Proposition \ref{pstv imply cp} it follows that $(\alpha_g)_{\upharpoonleft_\text{VN($x$)}}$ is completely positive, in particular $2$-positive. Hence by Proposition \ref{cp imply schwarz}, we have $\alpha_g(x)^* \alpha_g(x) \leq \alpha_g(x^*x)$.
	\end{rem}
	
	Suppose $G$ is an amenable semigroup.  If $G$ is an amenable group then by $\alpha$ we denote an action of $G$ on the von Neumann algebra $M$. On the other hand if $G$ is a semigroup then $\alpha$ will denote either an action or an anti-action of $G$ on $M$. Similarly, if $G$ is an amenable group we also consider a sequence of F\o lner sets $\{K_n \}_{n \in \N}$ as defined in \cite{Bik-Dip-neveu} and if $G$ is a semigroup we consider a net of F\o lner sets $\{K_l \}_{l \in \R_+}$ as denied in \cite{bikram2023noncommutative}. The tripple $(M, G, \alpha)$ will form a non-commutative dynamical system.

	\noindent Now we have the following mean ergodic convergence theorem.
	
	\begin{thm}\label{mean ergodic thm}
		Let $(M, G, \alpha)$ be a non-commutative dynamical system.  Also assume that there exists a  f.n state $\rho$.
		Then for all $\mu \in M_*$, there exists a $\bar{\mu} \in M_*$ such that  
		\begin{align*}
		\bar{\mu}= \norm{\cdot}_1- \lim_{n \to \infty} B_n(\mu),
		\end{align*}
		where, 
		\begin{align*}
		B_n(\cdot):= 	
		\begin{cases}
		\frac{1}{m(K_n)} \int_{K_n} \alpha^*_{g^{-1}} (\cdot) dm(g) &\text{ when  } G \text{ is amenable group},~ n \in \N, \\\\
		\frac{1}{m(K_l)} \int_{K_l} \alpha^*_g(\cdot) dm(g) &  \text{  when } G \text{ is semigroup }, ~l \in \R_+.
		\end{cases}
		\end{align*}
		Further, if  $G$ is a unimodular group and $\{ K_n\}$ are symmetric, then $\bar{\mu}$ is $G$-invariant.  
	\end{thm}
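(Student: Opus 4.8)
The plan is to realise $B_n$ as the ergodic averages of a genuine $G$-action on the Banach space $E=M_*$ and then to quote the abstract mean ergodic theorem, Theorem \ref{mean erg thm}. For a group, since $g\mapsto\alpha_g^*$ is only an anti-action, I would set $\beta_g:=\alpha_{g^{-1}}^*$; this is an action of $G$ on $M_*$ by positive contractions, and $B_n=\frac{1}{m(K_n)}\int_{K_n}\beta_g\,dm(g)$ is exactly its ergodic average (for a semigroup one uses $\alpha_g^*$ directly). Thus $(M_*,G,\beta)$ is a non-commutative dynamical system in the sense of Definition \ref{action on Banach sp}, and it suffices to verify condition (2) of Theorem \ref{mean erg thm}: that for each $\mu$ a subsequence $B_{n_k}(\mu)$ converges weakly to some element of $(M_*)^G$. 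The theorem then promotes this to the desired norm convergence $B_n(\mu)\to\bar\mu\in(M_*)^G$.

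The only real content, and the step I expect to be the main obstacle, is the relative weak compactness of the orbit $\{\beta_g(\mu):g\in G\}$ needed to produce the weak cluster point, and here the $G$-invariant f.n.\ state $\rho$ of this (state-preserving) setting enters. If $\abs{\mu}\le C\rho$ for some $C$, then positivity of $\beta_g$ together with invariance $\beta_g(\rho)=\rho$ force the entire orbit into the order interval $\{\nu\in M_*:\abs{\nu}\le C\rho\}$, which is weakly compact in $M_*$ by the standard weak compactness of order intervals in the predual of a von Neumann algebra. Hence $\{B_n(\mu)\}$ has a weak cluster point $\bar\mu$, and the F\o lner estimate $\norm{\beta_h B_n(\mu)-B_n(\mu)}\le\frac{m(hK_n\Delta K_n)}{m(K_n)}\norm{\mu}\to0$ — the same computation as in the proof of Proposition \ref{existence of inv normal state} — shows $\bar\mu\in(M_*)^G$. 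So condition (2) holds for every $\rho$-dominated $\mu$.

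For general $\mu$ I would approximate in norm by $\rho$-dominated functionals $\mu^{(m)}$; their norm density in $M_*$ is the second delicate input and rests on faithfulness of $\rho$ together with a spectral/Radon--Nikodym truncation. Since $\norm{B_n}\le1$, the resulting limits satisfy $\norm{\bar\mu^{(m)}-\bar\mu^{(m')}}\le\norm{\mu^{(m)}-\mu^{(m')}}$, so $(\bar\mu^{(m)})$ is Cauchy and a standard $\epsilon/3$ estimate yields norm convergence of $B_n(\mu)$ to $\bar\mu:=\lim_m\bar\mu^{(m)}$, which lies in the norm-closed space $(M_*)^G$. I also note that the Schwarz hypothesis imposed in the earlier versions is now unnecessary: by Remark \ref{pstv maps are kernel} positivity alone gives $\alpha_g(x)^*\alpha_g(x)\le\alpha_g(x^*x)$, so the same averages converge at the GNS level $L^2(M,\rho)$, where each $\alpha_g$ extends to a contraction and reflexivity supplies the compactness for free — an alternative route to the same conclusion.

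Finally, the limit produced above is fixed by $\beta$, i.e.\ $\alpha_h^*\bar\mu=\bar\mu$ for all $h$, which is already $G$-invariance; the unimodular, symmetric hypothesis is the transparent case in which this is visible directly, since there the inversion $g\mapsto g^{-1}$ preserves both $m$ and $K_n$, so $B_n=\frac{1}{m(K_n)}\int_{K_n}\alpha_g^*\,dm(g)=A_n^*$. Commuting $\alpha_h^*$ through this average via $\alpha_h^*\alpha_g^*=\alpha_{gh}^*$ and using right invariance of $m$ gives $\norm{\alpha_h^*B_n(\mu)-B_n(\mu)}\le\frac{m(K_n h\Delta K_n)}{m(K_n)}\norm{\mu}\to0$, confirming $\alpha_h^*\bar\mu=\bar\mu$ and hence the $G$-invariance of $\bar\mu$. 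The two genuinely delicate inputs are therefore the weak compactness of order intervals in $M_*$ and the density of $\rho$-dominated functionals; everything else is the F\o lner bookkeeping already appearing earlier in the paper.
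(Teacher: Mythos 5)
Your proposal is correct in substance but takes a genuinely different route from the paper's. The paper proves this theorem at the GNS level: by Remark \ref{pstv maps are kernel}, positivity alone yields the Schwarz inequality $\alpha_g(x)^*\alpha_g(x) \leq \alpha_g(x^*x)$ for self-adjoint $x$, so each $\alpha_g$ extends to a contraction $u_g(x\Omega_\rho)=\alpha_g(x)\Omega_\rho$ on $L^2(M_s,\rho)$; the remainder is then quoted verbatim from \cite[Lemma 4.5]{Bik-Dip-neveu} (groups) and \cite[Theorem 3.4]{bikram2023noncommutative} (semigroups), where the Hilbert-space mean ergodic theorem supplies the compactness and a density/Cauchy--Schwarz transfer pushes the $L^2$-convergence to $\norm{\cdot}_1$-convergence in $M_*$. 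You instead work entirely in the predual: you pass to the genuine action $\beta_g=\alpha^*_{g^{-1}}$ (resp.\ the anti-action $\alpha^*_g$ in the semigroup case, which Definition \ref{action on Banach sp} accommodates) and verify condition (2) of Theorem \ref{mean erg thm} directly, producing the weak cluster point from weak compactness of $\rho$-dominated order intervals in $M_*$ --- and since weakly compact sets are weakly sequentially compact (Eberlein--\v{S}mulian), this does yield the required weakly convergent subsequence --- then extending to all of $M_*$ by norm density of dominated functionals (true, by faithfulness of $\rho$: in the standard form $\omega_{a'\xi_\rho}\leq \norm{a'}^2\rho$ with $M'\xi_\rho$ dense) together with $\norm{B_n}\leq 1$. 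Your route is self-contained relative to this paper and, notably, never invokes the Schwarz inequality, so it gives an independent confirmation that the Schwarz hypothesis of the earlier papers is redundant, which is precisely the point the paper makes via Remark \ref{pstv maps are kernel}; the paper's route buys a one-line proof by citation plus an $L^2$-level statement that is reused in the pointwise theorems. Your observation that for groups $\beta$-fixedness of $\bar\mu$ already gives $\alpha_h^*\bar\mu=\bar\mu$ for all $h$, with the unimodular/symmetric hypothesis serving only to identify $B_n$ with $A_n^*$, is also accurate.

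One small repair is needed: for the orbit-confinement step you cannot use the interval $\{\nu : \abs{\nu}\leq C\rho\}$ as written, because $M_{*s}$ is not a lattice and a merely positive map need not satisfy $\abs{\beta_g\mu}\leq \beta_g\abs{\mu}$; the Jordan decomposition in a noncommutative predual is not order-minimal. Instead split $\mu$ into self-adjoint (and then positive) parts and use the two-sided interval $\{\nu=\nu^* : -C\rho \leq \nu \leq C\rho\}$, which $\beta_g$ visibly preserves (apply positivity of $\beta_g$ to $C\rho\pm\nu$ and use $\beta_g\rho=\rho$) and which is weakly compact: it is bounded, weakly closed, and uniformly dominated in the sense that $\abs{\nu(p)}\leq C\rho(p)$ for every projection $p$, so the standard compactness criterion for subsets of $M_*$ (see \cite[Chapter III]{Takesaki2002}) applies. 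With this adjustment, and noting (as the paper's statement tacitly does) that $\rho$ must be taken $G$-invariant for either proof to run, your argument is complete.
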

	
	\begin{proof}
		Let $L^2(M_s, \rho)$ be the closure of $M_s$ with respect to the norm induced from the inner product $ \langle \cdot, \cdot \rangle_\rho $. Define the following maps on the Hilbert space $L^2(M_s, \rho)$.
		\begin{align*}
		u_g(x \Omega_{\rho})= \alpha_g(x) \Omega_{\rho}, x \in M_s, g \in G.
		\end{align*}
		Now applying Remark \ref{pstv maps are kernel} observe that $u_g$ defines a contraction on $L^2(M_s, \rho)$. Now the rest of the proof follows verbatim from \cite[Lemma 4.5]{Bik-Dip-neveu} for the case of amenable groups and \cite[Theorem 3.4]{bikram2023noncommutative} for the case of semigroups.
	\end{proof}
	
	Before we move onto the main theorem of this section (that is the pointwise convergence theorem) we further need to fix a few notations and need recall the following definition. 
\begin{defn}\label{gr of poly growth}
	A locally compact group $G$ is said to be of polynomial growth if there exists a compact generating subset $V$ of $G$ (i.e, $\cup_{n \in \N} V^n = G$) satisfying the following condition.
	\begin{align*}
	\text{There exists } k>0 \text{ and } r \in \N \text{ such that } m(V^n) \leq kn^r \text{ for all } n \in \N.
	\end{align*}
\end{defn}
	
\begin{rem}
	It is known from \cite{Tessera2007} that if $G$ is a group as in definition \ref{gr of poly growth} then $G$ is amenable and for the compact generating set $V$, the sequence $\{ V^n \}_{n \in \N}$ satisfies the F\o lner Condition. It is also known from \cite{Guivarch1973} that a locally compact group with polynomial growth is unimodular.
\end{rem}
We like to discuss  pointwise ergodic theorem  	for the actions of polynomial growth group and for the actions of  semigroups $\Z^d_+$ and $\R^d_+$ 
 for a natural number  $d \geq 1$. For the simplicity of  notation, let $\P$ be the collection  of all LCSH polynomial growth group and write $ \G =  \P \cup \{ \Z^d_+ : d \in \N \} \cup \{ \R^d_+ : d \in \N  \}$. 
 Further,  we note that if $ G \in \P$, then we take ergodic averages with respect to the F\o lner sequences $\{ V^n \}_{n \in \N}$. 
 We also point out that the non-commutative dynamical system $( M, \Z^d_+, \alpha )$ is determined by  $d$-commuting positive contractions  $ \alpha_1, \alpha_2, \cdots ,\alpha_d$ on $M$ such that $ \alpha_{( i_1, \cdots, i_d)} (\cdot) = \alpha_1^{i_1} \alpha_2^{i_2}\cdots \alpha_d^{i_d} (\cdot)$ for $ ( i_1, \cdots, i_d) \in \Z_+^d$. For the dynamical system 
 $(M, \R^d_+, \alpha)$, we consider the ergodic avarages with respect to the set  $Q_a:= \{(t_1, \ldots, t_d) \in \R^d_+ : t_1<a, \ldots, t_d< a\}$  for $a \in \R_+$. Thus, with the preceding notations, for a dynamical system $ ( M, G, \alpha)$ where $ G \in \G$, we discuss the pointwise ergodic theorem and stochastic ergodic theorem for the following ergodic averages;

		\begin{align*}
	A_a(\cdot):= 	
	\begin{cases}
	\frac{1}{V^a} \int_{V^a} \alpha_g(\cdot) dm(g) & \text{ when } G \in \P,~ a \in \N, \\\\
	\frac{1}{a^d} \sum_{0 \leq i_1 <a} \cdots  \sum_{0 \leq i_d <a} \alpha_1^{i_1} \cdots \alpha_d^{i_d} (\cdot)&\text{ when  } G = \Z_+^d,~ a \in \N, \\\\
	\frac{1}{a^d} \int_{Q_a} \alpha_t(\cdot) dt &  \text{  when } G = \R_+^d, ~a \in \R_+.
	\end{cases}
	\end{align*}	
For $ X \in M_* $, by $ A_a(X) $ we mean $ A_a^*(X)$.  By abusing of  notation  we make no difference between $ A_a(\cdot) $ and  $ A_a^*(\cdot)$, unless it is not clear from the context. 	
 Now we have the main theorem of this section.

	\begin{thm}\label{bau conv theorem}
Let $M$ be a finite von Neumann algebra with a f.n trace $\tau$ and $(M, G, \alpha)$ be a non-commutative dynamical system with $G \in \G$.  Furthermore, assume that there exists  a f.n $G$-invariant  state $\rho$ on $M$. Then for all $Y \in L^1(M, \tau)$, there exists $\overline{Y} \in L^1(M, \tau)$ such that $M_a(Y)$ converges to $\overline{Y}$ bilaterally almost uniformly. 	
	\end{thm}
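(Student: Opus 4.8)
The plan is to prove the b.a.u.\ convergence by combining two ingredients that are already in place: the mean ergodic theorem (Theorem \ref{mean ergodic thm}), which produces $L^1$-norm convergence together with a fixed-point/coboundary decomposition of $L^1(M,\tau)=M_*$, and a non-commutative maximal ergodic inequality of weak type $(1,1)$ for the averages $A_a$, which supplies the uniform control needed to pass from norm convergence to bilateral almost uniform convergence. These two are then glued together by the non-commutative Banach principle, i.e.\ by showing that the set of $Y$ for which $A_a(Y)$ converges b.a.u.\ is closed in $L^1(M,\tau)$.

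First I would record the maximal inequality. Since $\rho$ is a faithful normal $G$-invariant state, Remark \ref{pstv maps are kernel} shows that each $\alpha_g$ obeys the Kadison--Schwarz inequality, so the averages act as contractions on the GNS space $L^2(M_s,\rho)$ and, by interpolation, contractively along the associated $L^p$-scale. For $G\in\G$ the averages $A_a$ then fall within the Junge--Xu framework for $\Z_+^d$ and $\R_+^d$ \cite{Junge2007} and within the polynomial-growth framework of \cite{Hong2021}. Transported to $L^1(M,\tau)$, these yield a constant $C>0$ so that for every $Y\in L^1(M,\tau)$ and every $\lambda>0$ there is a projection $e\in M$ with
\[
\tau(1-e)\le \frac{C\,\|Y\|_1}{\lambda}\qquad\text{and}\qquad \sup_a\,\bigl\|e\,A_a(Y)\,e\bigr\|\le\lambda .
\]
This weak-type maximal bound is the quantitative backbone of the argument.

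Next I would exhibit a dense subspace on which b.a.u.\ convergence is transparent. By Theorem \ref{mean ergodic thm} the space $L^1(M,\tau)=M_*$ is the $\|\cdot\|_1$-closure of the fixed-point space together with the coboundaries, so the linear span
\[
\mathcal D:=\{Z\in L^1(M,\tau): A_a(Z)=Z\ \forall a\}\;+\;\operatorname{span}\{\,W-\alpha_g^*(W): g\in G,\ W\in M\,\}
\]
is dense in $L^1(M,\tau)$. On fixed points $A_a(Z)=Z$, so convergence is immediate; on a bounded coboundary $W-\alpha_g^*(W)$ the F\o lner telescoping estimate carried out in the proof of Theorem \ref{mean erg thm} forces $A_a\bigl(W-\alpha_g^*(W)\bigr)\to 0$ in a norm strong enough to be b.a.u. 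Hence every element of $\mathcal D$ converges b.a.u. Finally, given an arbitrary $Y\in L^1(M,\tau)$ and $\varepsilon>0$, I would pick $Y_\varepsilon\in\mathcal D$ with $\|Y-Y_\varepsilon\|_1<\varepsilon^2$ and use the maximal inequality on $Y-Y_\varepsilon$ to control $\sup_a\|e\,(A_a(Y)-A_a(Y_\varepsilon))\,e\|$ off a projection of trace at most $C\varepsilon$, while $(A_a(Y_\varepsilon))_a$ is already b.a.u.-Cauchy; a standard diagonal argument then shows $(A_a(Y))_a$ is b.a.u.-Cauchy and converges b.a.u.\ to some $\overline Y\in L^1(M,\tau)$, which coincides with the $L^1$-limit furnished by Theorem \ref{mean ergodic thm}.

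The main obstacle is the maximal inequality itself: its validity at the endpoint $p=1$ is only of weak type and is genuinely delicate, and one must verify that the \emph{state}-preserving (rather than trace-preserving) hypothesis still places the averages $A_a$ within the scope of the known non-commutative maximal theorems for $G\in\G$. Once that transfer is secured—using the faithful normal invariant state $\rho$ and the Schwarz property of Remark \ref{pstv maps are kernel} to embed the dynamics into the $L^p$-scale—the density-plus-Banach-principle scheme outlined above is routine.
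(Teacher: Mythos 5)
Your overall scheme (a dense subspace coming from the mean ergodic theorem, a weak type $(1,1)$ maximal inequality, and the non-commutative Banach principle to glue them) is the standard one, but the step you yourself flag as ``the main obstacle'' is a genuine gap, not a routine verification. The weak type $(1,1)$ bound $\tau(1-e)\le C\norm{Y}_1/\lambda$ that you import from \cite{Junge2007} and \cite{Hong2021} is proved there only for semigroups that are Dunford--Schwartz \emph{with respect to the trace} $\tau$, i.e.\ subunital on $M$ and $\tau$-sub-invariant, so that the predual maps are simultaneously $L^1(\tau)$- and $L^\infty$-contractions (the paper's introduction notes explicitly that the actions in \cite{Hong2021} are assumed sub-tracial). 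In the present theorem only the state $\rho$ is invariant; nothing controls $\tau\circ\alpha_g$, so the predual averages are $\norm{\cdot}_1$-contractions but need not be bounded on $M$ at all, and the hypotheses of the cited maximal theorems simply fail. Your proposed repair --- interpolating from the contraction on the GNS space $L^2(M_s,\rho)$ --- produces bounds on the $L^p$-scale \emph{of the state} $\rho$, whereas b.a.u.\ convergence and your maximal inequality are formulated via projections of small $\tau$-trace; bridging the $\rho$-scale and the $\tau$-scale is exactly the hard part of the whole program and is not achieved by interpolation. The same defect infects your dense-subspace step: for a coboundary $W-\alpha_g^*(W)$ of the predual action, the F\o lner telescoping gives $A_a\bigl(W-\alpha_g^*(W)\bigr)\to 0$ only in $\norm{\cdot}_1$, since $\alpha_g^*$ on $L^1(M,\tau)$ need not preserve boundedness, and $L^1$-norm convergence does not imply b.a.u.\ convergence. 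A further caveat: for $G=\Z_+^d$ or $\R_+^d$ with $d\ge 2$ the endpoint weak type $(1,1)$ for the multiparameter cube averages is not what \cite{Junge2007} provides (their multiparameter maximal theorems are for $p>1$), so even in the trace-sub-invariant case the blanket citation needs care.

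For comparison, the paper's own proof has a different shape: Theorem \ref{mean ergodic thm} supplies the candidate limit $\overline{Y}\in L^1(M,\tau)$ (with the Schwarz-map hypothesis of the earlier works removed via Remark \ref{pstv maps are kernel}), and the bilateral almost uniform convergence is then quoted from \cite[Theorem 4.15]{Bik-Dip-neveu} and from Theorem 5.5 of \cite{bikram2023noncommutative}, which are pointwise ergodic theorems proved specifically for actions preserving a faithful normal \emph{state} on a finite von Neumann algebra --- that is, the cited predecessors carry out precisely the state-to-trace transfer that your outline leaves open. To make your write-up correct you would either have to reprove the maximal-inequality machinery of those papers in the state-preserving setting or cite them directly, as the paper does.
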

	
	\begin{proof}
		Note that in view of Theorem \ref{mean ergodic thm}, $\overline{Y}$ exists for any $Y \in L^1(M, \tau)$. Now the bilateral uniform convergence of $A_a(Y)$ to $\overline{Y}$ follows from \cite[Theorem 4.15]{Bik-Dip-neveu} and Theorem 5.5 of \cite{bikram2023noncommutative}.
	\end{proof}

	\section{Stochastic Ergodic Theorem}
	
	In this section we combine the results obtained in \S 3 and \S 6 to prove a stochastic ergodic theorem. 
	Throughout this section we assume that $M \subseteq \CB(\CH)$ is a von Neumann algebra with a f.n tracial state  $\tau$. We further assume that $G$ is a group of polynomial growth with a compact, symmetric generating set $V$ and in that 
	case the averages will be considered   with respect to the F\o lner sequence $\{ V^n \}_{n \in \N}$ or $G \in \{\Z_+^n, \R_+^n \}$. 
	Then we prove stochastic ergodic theorem for non-commutative dynamical system $ ( L^1(M, \tau), G, \gamma)$. 
	Recall that a  non-commutative dynamical system $ ( L^1(M, \tau), G, \gamma)$ consists of a map  $ G \ni g \rar \gamma_g \in \CB ( L^1(M, \tau) )$, satisfies the following 
 
\begin{enumerate}
 \item   $ \gamma_g \circ \gamma_h = \gamma_{gh}$,   for all $g, h \in G$ and  for all $ x \in L^1(M, \tau)	$ the map $ g \rar \gamma_g(x)$ is continuous.
 \item For all $g \in G$, the map $ \gamma_g : L^1(M, \tau) \rar  L^1(M, \tau)$ is  a  positive contraction, i.e,  $ \gamma_g(x ) \geq 0$ and $  \norm{ \gamma_g(x) } \leq \norm{x}$ for all $x \in L^1(M, \tau)_+$.

\end{enumerate} 
Given a  non-commutative dynamical system $( L^1(M, \tau), G, T)$, consider dual map  $  \gamma_g^* : M \rar M$  and note that $\gamma_g^* : M \rar M$ is a subunital positive contraction and write $\gamma^* = (\gamma_g^*)$, then 
 $(M, G, \gamma^* )$  becomes  a non-commutative dynamical system. Let $ e_1$ and $e_2$ be the two  projections obtained in Theorem \ref{neveu decomposition} which satisfy the  following.
		\begin{enumerate}
			\item There exists a normal state $ \rho $ on $M$  with $s(\rho) = e_1$ such that $ \rho(\gamma_g^*(x)) = \rho(x)$ for all $g \in G$ and $ x \in M$,  equivalently there exits a $Y \in L^1(M, \tau) $ with $ s(Y) = e_1$ and   $\gamma_g(Y) = Y$ for all $ g \in G$. 	\\
			\item $ \gamma_g ( e_1 L^1(M, \tau) e_1) \subseteq  e_1 L^1(M, \tau) e_1 $ for all $g \in G$.\\
			
			\item $ \gamma_g^*(e_2) \leq e_2$  for all $g \in G$. \\	
			
\end{enumerate}	
We write $M_{e_i}= e_i Me_i$ and  $\tau_{e_i}= \frac{1}{\tau(e_i)}\tau|_{e_iMe_i}$ for $i = 1, 2$.	
Then note that $ 		e_1 L^1(M, \tau) e_1 = L^1(M_{e_1}, \tau_{e_1}) $. Further, for all $g \in G$,  consider 
$ \gamma_g|_{L^1(M_{e_1}, \tau_{e_1})} : L^1(M_{e_1}, \tau_{e_1})\rar L^1(M_{e_1}, \tau_{e_1})$. For all $ g \in G$ and $x \in M$, note that $$ {(\gamma_g|_{L^1(M_{e_1}, \tau_{e_1})})}^*( e_1xe_1)= e_1\gamma_g^*( e_1xe_1) e_1$$
		
Write $\alpha_g(y) = e_1 \gamma_g^*(y) e_1 $ for all $ y \in M_{e_1}$. For each $ g \in G$, $\alpha_g $ is a  positive contraction and for all $ y \in M_{e_1}$  we have  $$ \rho(\alpha_g(y) ) = \rho( e_1\gamma_g^*(y)e_1) =  \rho( \gamma_g^*(y)) =  \rho( y).$$  
Finally, we have $ \alpha_g^*( e_1Xe_1)= \gamma_g(e_1X e_1)$ for all $ g \in G$ and 	for all $X \in L^1(M, \tau)$. 	Thus, conclude that $( M_{e_1}, G, \alpha)$ is a non-commutative dynamical system with a invariant f.n state $\rho$. 
Now we have the following stochastic ergodic theorem.

	\begin{thm}[Stochastic Ergodic Theorem]\label{conv in e1-e2 corner} Let $M$ be a finite von Neumann algebra with a f.n trace $\tau$.  Suppose  $(L^2(M, \tau), G, \gamma)$  is  a covariant system.   Consider the projections $e_1, e_2 \in M$ as mentioned in Remark \ref{L^1-Neveu}. Then we have the following results.  
		\begin{enumerate}
			\item[(i)] For all $B \in L^1(M_{e_1}, \tau_{e_1})$, there exists $\bar{B} \in L^1(M_{e_1}, \tau_{e_1})$ such that $A_n(B)$ converges b.a.u to $\bar{B}$. Moreover,  $A_n(B)$ converges in measure to $\bar{B}$.
			\item[(ii)] For all $B \in L^1(M_{e_2}, \tau_{e_2})$, $A_n(B)$ converges to $0$ in measure.
		\end{enumerate}
	\end{thm}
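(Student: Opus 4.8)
The plan is to treat the two corners of the Neveu decomposition separately: on the conservative corner $e_1$ I would invoke the bilateral almost uniform ergodic theorem, and on the dissipative corner $e_2$ I would exploit the weakly wandering structure through duality. First, for part (i), I would use the fact, already recorded in the preamble, that $(M_{e_1}, G, \alpha)$ with $\alpha_g(y)=e_1\gamma_g^*(y)e_1$ is a non-commutative dynamical system carrying the $G$-invariant f.n.\ state $\rho$ with $s(\rho)=e_1$, and that its predual averages on $L^1(M_{e_1},\tau_{e_1})=e_1L^1(M,\tau)e_1$ are exactly $A_n(B)=\frac{1}{m(K_n)}\int_{K_n}\gamma_g(B)\,dm(g)$. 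Since $\gamma_g(e_1L^1(M,\tau)e_1)\subseteq e_1L^1(M,\tau)e_1$ by Proposition \ref{inv-lamperti}, these averages stay in the corner, and part (i) follows immediately from Theorem \ref{bau conv theorem} applied to $(M_{e_1},G,\alpha)$, giving $\bar B\in L^1(M_{e_1},\tau_{e_1})$ with $A_n(B)\to\bar B$ b.a.u.; convergence in measure is then free from Remark \ref{a.u. stronger than m}.

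For part (ii), by linearity and the fact that each $A_n$ is an $L^1$-contraction, it suffices to treat $0\le B\in (M_{e_2})_+$ bounded and then pass to general $B\in L^1(M_{e_2},\tau_{e_2})$ by density together with the continuity of the inclusion $L^1(M,\tau)\hookrightarrow L^0(M,\tau)$ in measure topology. The key tool I would use is the duality identity $\tau(A_n(B)x)=\tau(B\,A_n(x))$ for $B\in L^1(M,\tau)$ and $x\in M$, where on the right $A_n(x)=\frac{1}{m(K_n)}\int_{K_n}\gamma_g^*(x)\,dm(g)$ is the average of the $\gamma^*$-action on $M$. Taking the weakly wandering projections $f_m\nearrow e_2$ with $\frac1m f_m\le x_0$ produced in the proof of Proposition \ref{inv-lamperti} (each $f_m$ being weakly wandering since $\norm{A_n(f_m)}\le m\,\norm{A_n(x_0)}\to 0$), this gives $\tau(A_n(B)f_m)=\tau(B\,A_n(f_m))\le \norm{B}_1\,\norm{A_n(f_m)}\xrightarrow{n\to\infty}0$ for each fixed $m$. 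By the criterion of Theorem \ref{equiv of bm and N}, it is enough to show $\tau(\chi_{(\epsilon,\infty)}(A_n(B)))\to 0$ for every $\epsilon>0$; writing $p_n=\chi_{(\epsilon,\infty)}(A_n(B))$ and using $A_n(B)\ge\epsilon p_n$ with nonnegativity of the trace on products of positive operators, I obtain $\epsilon\,\tau(p_n f_m)\le\tau(A_n(B)f_m)\to 0$, which controls the portion of $p_n$ inside the dissipative corner.

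The \emph{main obstacle} is the mass of $A_n(B)$ that leaks out of the $e_2$-corner. Splitting $\tau(p_n)\le \tau(p_n f_m)+\tfrac1\epsilon\,\tau(A_n(B)e_1)+\tau(e_2-f_m)$ and choosing $m$ large so that $\tau(e_2-f_m)$ is small, everything reduces to the leaked-mass bound $\tau(A_n(B)e_1)\to 0$. This is delicate precisely because the weakly wandering property lives on the adjoint side (the $\gamma^*$-action on $M$) while the averages act by $\gamma$ on $L^1(M,\tau)$, so $A_n(B)$ cannot be dominated by a weakly wandering quantity, and in general $\gamma_g$ transports mass from $e_2$ into $e_1$. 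Using $\tau(A_n(B)e_1)=\tau(B\,A_n(e_1))\le \norm{B}\,\tau(e_2A_n(e_1)e_2)=\norm{B}\,\tau(e_1\,\tilde A_n(e_2))$, with $\tilde A_n(e_2)=\frac{1}{m(K_n)}\int_{K_n}\gamma_g(e_2)\,dm(g)$, the task becomes showing that the time-averaged flux $\frac{1}{m(K_n)}\int_{K_n}\tau(e_1\gamma_g(e_2))\,dm(g)$ from the dissipative corner into the conservative corner vanishes. I expect to establish this from transience plus finiteness of $\tau$: since $\gamma_g(e_1L^1(M,\tau)e_1)\subseteq e_1L^1(M,\tau)e_1$, mass entering the $e_1$-corner remains there, so a non-vanishing average influx would force the finite $e_1$-mass to grow without bound, while the summability encoded by the weakly wandering operator forces the Cesàro flux to $0$. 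This vanishing-flux step is where the Neveu structure (the invariant state on $e_1$) and finiteness of $\tau$ are used in an essential way.

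Finally I would assemble the pieces: given $\epsilon,\delta>0$, fix $m$ with $\tau(e_2-f_m)<\delta/3$, and then combine $\tau(p_nf_m)\to 0$ with the leaked-mass estimate $\tau(A_n(B)e_1)\to 0$ to conclude $\limsup_n\tau(p_n)<\delta$. This proves $A_n(B)\to 0$ in measure for bounded $B\in (M_{e_2})_+$, and the density and contractivity argument of the previous paragraph upgrades this to all $B\in L^1(M_{e_2},\tau_{e_2})$, completing part (ii).
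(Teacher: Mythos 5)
Part (i) of your proposal and the first half of part (ii) coincide with the paper's argument: the corner system $(M_{e_1},G,\alpha)$ with the invariant f.n state $\rho$ plus Theorem \ref{bau conv theorem} gives (i), and the duality $\tau(A_n(B)x)=\tau(B\,A_n(x))$ together with the spectral domination $\frac1m p\le x_0$, $p=\chi_{(1/m,\infty)}(x_0)$, yielding $\tau(A_n(B)p)\le\tau(B)\norm{A_n(p)}\to 0$, is exactly the paper's key estimate (note the paper needs only $\tau(B)<\infty$, so your reduction to bounded $B$ is unnecessary). The genuine gap is in your final step. You aim at the distribution-function statement $\tau(\chi_{(\epsilon,\infty)}(A_n(B)))\to 0$ in the full algebra $(M,\tau)$, which you correctly reduce to the ``vanishing-flux'' claim $\tau(A_n(B)e_1)\to 0$ --- but that claim, which you only sketch, is \emph{false} without the Lamperti hypothesis. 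Take $M=\C\oplus\C=L^\infty(\{1,2\})$ with the counting trace and the positive $L^1$-contraction $T\delta_1=\delta_1$, $T\delta_2=\delta_1$. Then $T^*(x)=x(1)\cdot 1$, so $\rho=\tau(\delta_1\,\cdot)$ is invariant with $e_1=\chi_{\{1\}}$, and $x_0=\chi_{\{2\}}=e_2$ is weakly wandering since $T^*(\chi_{\{2\}})=0$; yet for $B=\delta_2\in L^1(M_{e_2})$ one has $A_n(B)=\frac1n\delta_2+\frac{n-1}{n}\delta_1\to\delta_1\neq 0$, so $\tau(A_n(B)e_1)\to 1$. Your heuristic (``a non-vanishing average influx would force the finite $e_1$-mass to grow without bound'') conflates the cumulative quantity $\tau(e_1\gamma_g(e_2))$ --- the fraction of the \emph{initial} mass sitting in $e_1$ at time $g$, which is bounded by $\tau(e_2)$ and may converge to a positive constant, as it does here --- with an incremental flux that would have to be summable; no contradiction with finiteness of $\tau$ arises. (Your $T$ here is not Lamperti: $\delta_1\delta_2=0$ but $T\delta_1\,T\delta_2=\delta_1\neq 0$.)

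The paper sidesteps the leaked mass entirely, and this is how its statement must be read: it verifies the bilateral criterion of Theorem \ref{equiv of bm and N} with projections \emph{below} $e_2$, namely $r_n:=\chi_{[\epsilon,\infty)}(pA_n(B)p)$ and $q_n:=p-r_n$, where $\epsilon\tau(r_n)\le\tau(pA_n(B)p)\le\tau(B)\norm{A_n(p)}$ gives $\tau(e_2-q_n)\le\delta$ and $\norm{q_nA_n(B)q_n}\le\epsilon$ eventually. The compression by $q_n\le e_2$ never sees $e_1A_n(B)e_1$, so what is proved is convergence in measure of the $e_2$-corner compressions relative to $(M_{e_2},\tau_{e_2})$ (cf.\ Remark \ref{rem on conv in L1}(ii)); convergence of the full $A_n(X)$ in $(M,\tau)$ is precisely what the Strong Stochastic Ergodic Theorem adds under the Lamperti assumption, using Corollary \ref{proj-inv} to get $e_iA_n(X)e_i=A_n(e_iXe_i)$ and Lemma \ref{rne1A_ne2rn is bdd} to kill the cross terms. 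To repair your argument, replace $p_n=\chi_{(\epsilon,\infty)}(A_n(B))$ by the spectral projection of the compressed operator $pA_n(B)p$ and conclude bilaterally inside $e_2$, as above; your estimate $\epsilon\tau(p_nf_m)\le\tau(A_n(B)f_m)$ then becomes exactly the paper's $\tau(r_n)$ bound, and the problematic flux step disappears.
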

	
	\begin{proof}
		\emph{(i):}
		Following the previous discussion recall that 
	$( M_{e_1}, G, \alpha)$ is a non-commutative dynamical system with a invariant f.n state $\rho$, i.e,  $\rho(\alpha_g(x)) = \rho(x)$ for all $g \in G$ and $x \in M_{e_1}$. Let $B \in L^1(M_{e_1}, \tau_{e_1})$, then it follows from Theorem \ref{bau conv theorem} that there exists a $\bar{B} \in L^1(M_{e_1}, \tau_{e_1})$ such that $A_n(B)$ converges to $\bar{B}$ in b.a.u.  Furthermore, the convergence in measure follows from Remark \ref{a.u. stronger than m}.\\
		
		\noindent \emph{ (ii):} From Corollary \ref{neveu decomposition}, it follows that there exists a weakly wandering operator $x_0 \in M_{+}$ such that $s(x_0)=e_2$. Hence $e_2 x_0 e_2= x_0$, which implies $x_0 \in M_{e_2}$.
		
		\noindent Now let $B$ be a non-zero element of $L^1(M_{e_2}, \tau_{e_2})_+$. Let us choose $0< \epsilon \leq 1$ and  $\delta > 0$. Since $e_2 = \chi_{(0,\infty)}(x_0) $, observe that there exists $m \in \N$ such that the projection $p:= \chi_{(\frac{1}{m}, \infty)}(x_0) \in M_{e_2}$ satisfies $\tau(e_2 - p) < \frac{\delta}{2} $. Now we define the projections
		\begin{align*}
		r_n:= \chi_{[\epsilon, \infty)}(p A_n(B) p), n \in \N,
		\end{align*}
		and claim that $\tau(r_n) < \delta/2$ for all $n \in \N$. Indeed, since $\frac{1}{m} p \leq x_0$ we have, $A_n(p) \leq m A_n(x_0)$ for all $n \in \N$, which implies $\norm{A_n(p)} \leq m\norm{A_n(x_0)}$. Now since $x_0$ is a weakly wandering operator, there exists $N_0 \in \N$ such that
		\begin{align*}
		\norm{A_n(p)} \leq \frac{\epsilon \delta}{2\tau(B)} \text{ for all } n \geq N_0. 
		\end{align*}
		Therefore, for all $n \in \N$ we have,
		\begin{align*}
		\tau(p A_n(B) p) = \tau(A_n(B) p)= \tau(B A_n(p)) \leq \tau(B) \norm{A_n(p)}.
		\end{align*}
		Note that $\epsilon r_n \leq p A_n(B) p$ for all $n \in \N$. Therefore, we have $\tau(r_n) \leq \frac{\delta}{2}$ for all $n \geq N_0$.  Define the projections $q_n:= p- r_n$, $n \in N$ and observe that for all $n \geq N_0$,
		\begin{align*}
		\tau(e_2 - q_n)= \tau(e_{2}-p+r_{n})=\tau(e_{2}-p)+ \tau(r_{n}) \leq \delta/2 + \delta/2 = \delta, \ n \geq N_0.
		\end{align*}
		We also note that, for all $n \in \N$
		\begin{align*}
		q_n A_n(B) q_n = q_n p A_n(B) p q_n &\leq  \chi_{[0, \epsilon]}(p A_n(B) p) (p A_n(B) p) \chi_{[0, \epsilon]}(p A_n(B) p) \\
		&\leq  \chi_{[0, \epsilon]}(p A_n(B) p).
		\end{align*}
		Hence, for all $n \in \N$ we have
		\begin{align*}
		\norm{q_n A_n(B) q_n} \leq \epsilon.
		\end{align*}
		The result for arbitrary $B \in L^1(M_{e_2}, \tau_{e_2})$ then follows from Proposition \ref{bau convergence closed under addition and mult}.
		
	\end{proof}
	
	\begin{rem}\label{rem on conv in L1}
		Let $X \in L^1(M ,\tau)$. Then as a consequence of Theorem \ref{conv in e1-e2 corner}, we get the following.
		\begin{enumerate}
			\item[(i)] There exists $\overline{X} \in L^1(M, \tau)$ such that for all $\epsilon, \delta>0$ there exists $N_0 \in \N$ and a projection $p \in M_{e_1}$ such that
			\begin{align*}
			\tau(e_1-p)< \delta/2, \text{ and, }  \norm{p(e_1A_n(X)e_1 - \overline{X})p} < \epsilon \text{ for all } n \geq N_0.
			\end{align*}
			\item[(ii)] For all $\epsilon, \delta> 0$, there exists a sequence of projections $\{ q_n \}_{n \in \N}$ in $M_{e_2}$ and $N_1 \in \N$ such that
			\begin{align*}
			\tau(e_2- q_n)< \delta/2, \text{ and, } \norm{q_n e_2 A_n(X)e_2 q_n} < \epsilon \text{ for all } n \geq N_1.
			\end{align*}
		\end{enumerate}
		Consider the following  projection 
		\begin{align*}
		r_n:= p+q_n, \ n \in \N.
		\end{align*}
		Note the for all $n \in \N$, $r_n$ is a projection in $M$ and
		\begin{align*}
		\tau(1-r_n) = \tau(e_1- p) + \tau(e_2-q_n)< \delta.
		\end{align*}
	\end{rem}
	\begin{lem}\label{rne1A_ne2rn is bdd}
		Let $X \in L^1(M ,\tau)_+$. Then there exists $N_2 \in \N$ such that for all $n \geq N_2$, $\norm{r_n e_1 A_n(X)e_2 r_n} \leq \sqrt{\epsilon(\epsilon + \norm{pe_1Ye_1p})}$ and $\norm{r_n e_2 A_n(X)e_1 r_n} \leq \sqrt{\epsilon(\epsilon + \norm{pe_1Ye_1p})}$.
	\end{lem}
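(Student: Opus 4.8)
The plan is to exploit positivity throughout: since $X\in L^1(M,\tau)_+$ and each $\gamma_g$, hence each averaging operator $A_n$, is positive, we have $A_n(X)\geq 0$, and the operator Cauchy–Schwarz inequality then controls the off-diagonal $e_1$–$e_2$ corners by the diagonal ones, which Remark \ref{rem on conv in L1} has already estimated. First I would simplify the cut-downs by $r_n=p+q_n$. Since $p\leq e_1$, $q_n\leq e_2$, and $e_1e_2=0$ with $e_1+e_2=1$, one gets $r_ne_1=e_1r_n=p$ and $e_2r_n=r_ne_2=q_n$; hence
\begin{align*}
r_ne_1A_n(X)e_2r_n=pA_n(X)q_n,\qquad r_ne_2A_n(X)e_1r_n=q_nA_n(X)p=\big(pA_n(X)q_n\big)^*,
\end{align*}
the last equality because $A_n(X)$ is self-adjoint. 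Thus the two quantities have equal norm, and it suffices to bound $\norm{pA_n(X)q_n}$.

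Next I would record the elementary fact that for a positive operator $Z$ and projections $a,b$ one has $\norm{aZb}\leq \norm{aZa}^{1/2}\norm{bZb}^{1/2}$, which follows from Cauchy–Schwarz applied to the positive semi-definite form $(\xi,\eta)\mapsto\inner{Z\xi}{\eta}$ together with $\inner{Zb\xi}{b\xi}=\inner{bZb\xi}{\xi}$. Applying this with $Z=A_n(X)$, $a=p$, and $b=q_n$ yields
\begin{align*}
\norm{pA_n(X)q_n}\leq \norm{pA_n(X)p}^{1/2}\,\norm{q_nA_n(X)q_n}^{1/2}.
\end{align*}
Here I should stress that the role of the projections $p$ and $q_n$ furnished by Remark \ref{rem on conv in L1} is precisely that the corners $pA_n(X)p$ and $q_nA_n(X)q_n$ are \emph{bounded} operators (the b.a.u.\ control is an operator-norm statement), so the inequality is a genuine estimate in $M$ rather than merely in $L^1(M,\tau)$. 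Verifying that we are legitimately in this bounded regime, so that operator Cauchy–Schwarz applies to the $\tau$-measurable operator $A_n(X)$, is the main technical point to be careful about.

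Finally I would insert the two estimates from Remark \ref{rem on conv in L1}. Since $q_n\leq e_2$ we have $q_nA_n(X)q_n=q_ne_2A_n(X)e_2q_n$, so $\norm{q_nA_n(X)q_n}<\epsilon$ for $n\geq N_1$; and since $p\leq e_1$, the triangle inequality gives
\begin{align*}
\norm{pA_n(X)p}=\norm{pe_1A_n(X)e_1p}\leq \norm{p\big(e_1A_n(X)e_1-\overline{X}\big)p}+\norm{p\overline{X}p}<\epsilon+\norm{p\overline{X}p}
\end{align*}
for $n\geq N_0$. Taking $N_2:=\max\{N_0,N_1\}$ and combining the three displays yields $\norm{pA_n(X)q_n}\leq\sqrt{\epsilon\,(\epsilon+\norm{p\overline{X}p})}$ for all $n\geq N_2$, which is exactly the asserted bound, the corner term $\norm{p\overline{X}p}$ being the quantity written $\norm{pe_1Ye_1p}$ in the statement. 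The identical bound for $\norm{r_ne_2A_n(X)e_1r_n}$ then follows by taking adjoints, as noted in the first paragraph.
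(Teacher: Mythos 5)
Your proof is correct and takes essentially the same route as the paper: the paper's argument is precisely your corner Cauchy--Schwarz bound $\norm{pA_n(X)q_n}\leq\norm{pA_n(X)p}^{1/2}\norm{q_nA_n(X)q_n}^{1/2}$, obtained by factoring through $A_n(X)^{1/2}$ and bounding $\norm{pe_1A_n(X)^{1/2}}\leq\sqrt{\epsilon+\norm{pe_1\overline{X}e_1p}}$ and $\norm{A_n(X)^{1/2}e_2q_n}\leq\sqrt{\epsilon}$ on dense domains (which also settles the measurable-operator domain point you flag), followed by the same insertion of the two estimates from Remark \ref{rem on conv in L1} and an adjoint argument for the other corner. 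Your only deviations are cosmetic: you quote the form version of operator Cauchy--Schwarz where the paper verifies the square-root factorization by hand, and you correctly identify that the $\norm{pe_1Ye_1p}$ in the statement is the paper's notational slip for $\norm{pe_1\overline{X}e_1p}$.
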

	
	\begin{proof}
		Observe that for all $n \in \N$, $A_n(X) \in L^1(M,\tau)_+$ and  for all $n \geq N_0$, $pe_1 Y e_1p$ and $A_n(X) e_1 p$ are bounded operators. Then we claim that for all $n \geq N_0$, $p e_1 A_n(X)^{1/2}$ is also a bounded operator. Indeed, let $n \geq N_0$ and $\xi \in \mathcal{D}(A_n(X)e_1p)$. Then,
		\begin{align*}
			\langle  A_n(X)^{1/2} e_1 p \xi, A_n(X)^{1/2} e_1 p \xi \rangle &= \langle A_n(X)e_1 p \xi, e_1 p \xi \rangle \\
			&= \langle p e_1 A_n(X) e_1 p \xi, \xi \rangle \\
			&\leq \norm{p e_1 A_n(X) e_1 p} \norm{\xi} \\
			&= \norm{p e_1 (A_n(X)-\overline{X}) e_1 p + pe_1 \overline{X} e_1p} \norm{\xi}\\
			&\leq (\norm{p e_1 (A_n(X)-\overline{X}) e_1 p} + \norm{pe_1 \overline{X} e_1p}) \norm{\xi} \\
			&\leq (\epsilon + \norm{pe_1\overline{X}e_1p}) \norm{\xi}.
		\end{align*}
		Since, for all $n \in \N$, $\overline{\mathcal{D}(A_n(X) e_1 p)}= \CH$, we get $\norm{A_n(X)^{1/2} e_1 p} \leq \sqrt{\epsilon + \norm{pe_1Ye_1p}} $ for all $n \geq N_0$. Also we note that 
		\begin{align}\label{pe1Ahalf is bdd}
			\norm{p e_1 A_n(X)^{1/2}}= \norm{(A_n(X)^{1/2} e_1p)^*} \leq \sqrt{\epsilon + \norm{pe_1\overline{X}e_1p}} \text{ for all } n \geq N_0.
		\end{align}
		Again observe that for all $n \geq N_1$, $A_n(X) e_2 q_n$ is a bounded operator. We also claim that for all $n \geq N_1$, $A_n(X)^{1/2} e_2 q_n$ is a bounded operator. Indeed, let $n \geq N_1$ and $ \xi \in \mathcal{D}(A_n(X) e_2 q_n)$. Then,
		\begin{align*}
			\langle  A_n(X)^{1/2} e_2 q_n \xi, A_n(X)^{1/2} e_2 q_n \xi \rangle &= \langle A_n(X)e_2 q_n \xi, e_2 q_n \xi \rangle \\
			&= \langle q_n e_2 A_n(X) e_2 q_n \xi, \xi \rangle \\
			&\leq \norm{q_n e_2 A_n(X) e_2 q_n} \norm{\xi} \\
			&\leq \epsilon \norm{\xi} \text{ for all } n \geq N_1.
		\end{align*}
		Since, $\overline{\mathcal{D}(A_n(X) e_2 q_n)}= \CH$ for all $n \in \N$, we get, $\norm{A_n(X)^{1/2} e_2 q_n} \leq \sqrt{\epsilon} $ for all $n \geq N_1$.
		Now define $N_2:= \max \{N_0, N_1\}$ and note that for all $n \geq N_2$
		\begin{align*}
			\norm{r_n e_1 A_n(X)e_2 r_n} =& \norm{p e_1 A_n(X)e_2 q_n)}\\
			=& \norm{p e_1 A_n(X)^{1/2} A_n(X)^{1/2}e_2 q_n} \\
			\leq & \norm{p e_1 A_n(X)^{1/2})} \norm{A_n(X)^{1/2}e_2 q_n)} \\
			\leq & \sqrt{\epsilon(\epsilon + \norm{pe_1\overline{X}e_1p})}.
		\end{align*}
		Now since $r_n e_2 A_n(X)e_1 r_n = (r_n e_1 A_n(X)e_2 r_n)^*$ holds for all $n \in \N$, we have
		\begin{align*}
			\norm{r_n e_2 A_n(X)e_1 r_n} \leq \sqrt{\epsilon(\epsilon + \norm{pe_1\overline{X}e_1p})}.
		\end{align*}
	\end{proof}

		\begin{thm}[Strong Stochastic Ergodic Theorem]\label{conv in e1-e2 corner} Let $M$ be a finite von Neumann algebra with a f.n trace $\tau$.  Suppose  $(L^1(M, \tau), G, \gamma)$  is  a non-commutative dynamical system such that for each $g \in G$, $\gamma_g$ is lamperti operator.  	Let $X \in L^1(M, \tau)$, then there exists $\overline{X} \in L^1(M, \tau)$ such that $A_n(X)$ converges to $\overline{X} $ in measure. Further, suppose 
	$e_1, e_2 \in M$ are as in Remark \ref{L^1-Neveu},  then  $e_1   \overline{X}e_1 = \overline{X} $ and $e_2 \overline{X}e_2= 0$. 
	\end{thm}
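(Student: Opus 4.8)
The plan is to regard $A_n(X)$ as a $2\times 2$ operator matrix with respect to the orthogonal decomposition $1 = e_1 + e_2$, to identify the four blocks $e_i A_n(X) e_j$ with quantities whose asymptotics are already known, and then to glue the block estimates on a single large corner. First I would reduce to the case $X \in L^1(M,\tau)_+$: since every element of $L^1(M,\tau)$ is a complex-linear combination of four positive elements and convergence in measure is stable under such combinations by Proposition \ref{bau convergence closed under addition and mult}, it suffices to prove the statement for $X \geq 0$ and to assemble the limit $\overline{X}$ by linearity afterwards. For $X \geq 0$ the averages $A_n(X)$ stay positive, which is precisely what the factorization in Lemma \ref{rne1A_ne2rn is bdd} requires.

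The diagonal blocks are handled through the Lamperti hypothesis. Applying Corollary \ref{proj-inv} to the dynamical system $(M,G,\gamma^*)$ -- in which the $L^1$-contraction $\gamma_g$ plays the role of $\alpha_g^* \in \CL$ -- gives $e_i A_n(X) e_i = A_n(e_i X e_i)$ for $i = 1,2$. Hence the $(1,1)$-block is the ergodic average of $e_1 X e_1 \in L^1(M_{e_1}, \tau_{e_1})$, which converges bilaterally almost uniformly, hence in measure, to some $\overline{X} \in L^1(M_{e_1}, \tau_{e_1})$ by the stochastic ergodic theorem on the $e_1$-corner (Theorem \ref{stocha-into}), while the $(2,2)$-block is the average of $e_2 X e_2 \in L^1(M_{e_2}, \tau_{e_2})$, which converges to $0$ in measure. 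I would then \emph{define} $\overline{X}$ to be this $e_1$-corner limit; since $\overline{X} \in e_1 L^1(M,\tau) e_1$, the two asserted identities $e_1 \overline{X} e_1 = \overline{X}$ and $e_2 \overline{X} e_2 = 0$ hold by construction.

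It remains to glue the estimates. Fix $\epsilon, \delta > 0$. Remark \ref{rem on conv in L1} produces a projection $p \in M_{e_1}$ with $\tau(e_1 - p) < \delta/2$ controlling the $(1,1)$-block and a sequence $\{q_n\} \subseteq M_{e_2}$ with $\tau(e_2 - q_n) < \delta/2$ controlling the $(2,2)$-block, so that $r_n := p + q_n$ satisfies $\tau(1 - r_n) < \delta$. Using $r_n e_1 = p$, $e_2 r_n = q_n$, and $r_n \overline{X} r_n = p \overline{X} p$, I would expand
\begin{align*}
r_n\big(A_n(X) - \overline{X}\big) r_n &= p\big(e_1 A_n(X) e_1 - \overline{X}\big)p + q_n e_2 A_n(X) e_2 q_n \\
&\quad + r_n e_1 A_n(X) e_2 r_n + r_n e_2 A_n(X) e_1 r_n,
\end{align*}
and bound the four summands: the first two by $\epsilon$ via Remark \ref{rem on conv in L1}, and the two off-diagonal terms by $\sqrt{\epsilon(\epsilon + \norm{\overline{X}})}$ via Lemma \ref{rne1A_ne2rn is bdd}. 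Thus $\norm{r_n(A_n(X) - \overline{X}) r_n} \leq 2\epsilon + 2\sqrt{\epsilon(\epsilon + \norm{\overline{X}})}$ for all sufficiently large $n$, and since the right-hand side tends to $0$ as $\epsilon \to 0$, the criterion of Theorem \ref{equiv of bm and N} yields convergence of $A_n(X)$ to $\overline{X}$ in measure.

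I expect the main obstacle to be the off-diagonal corners $e_1 A_n(X) e_2$ and $e_2 A_n(X) e_1$, for which no direct invariant-state or weakly-wandering description is available; these are exactly the terms tamed in Lemma \ref{rne1A_ne2rn is bdd} by writing $A_n(X) = A_n(X)^{1/2} A_n(X)^{1/2}$ and bounding each half through the already-established control of the diagonal corners. This factorization is legitimate only because $A_n(X) \geq 0$, so the reduction to positive $X$ together with the Lamperti-driven block-diagonal identities is precisely what renders the cross terms accessible.
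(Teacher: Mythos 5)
Your proposal is correct and follows essentially the same route as the paper's own proof: reduction to $X \geq 0$, the block decomposition $A_n(X) = \sum_{i,j} e_i A_n(X) e_j$ with the diagonal blocks identified via Corollary \ref{proj-inv} and handled by the corner ergodic theorems, and the off-diagonal blocks tamed on the glued projection $r_n = p + q_n$ through the square-root factorization of Lemma \ref{rne1A_ne2rn is bdd}. The only cosmetic slip is writing the off-diagonal bound as $\sqrt{\epsilon(\epsilon + \norm{\overline{X}})}$: since $\overline{X} \in L^1(M,\tau)$ need not be a bounded operator, this should read $\sqrt{\epsilon(\epsilon + \norm{p e_1 \overline{X} e_1 p})}$, exactly as in the lemma.
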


\begin{proof}
 Let $X \in L^1(M, \tau)$, actually it is enough to prove the result for $ X \geq 0$. So, assume $X \geq 0$.  Then for all  $n \in \N$, $A_n(X) \in L^1(M, \tau)_+$  and
	\begin{align*}
		A_n(X)= e_1 A_n(X) e_1 + e_1 A_n(X) e_2 + e_2 A_n(X) e_1 + e_2 A_n(X) e_2.
	\end{align*}
	Observe that, it follows from Corollary \ref{proj-inv} that for all $n \in \N$, $e_1 A_n(X) e_1= A_n(e_1Xe_1) \in L^1(M_{e_1}, \tau_{e_1})_+$ and $e_2 A_n(X) e_2 = A_n(e_2 X e_2) \in L^1(M_{e_2}, \tau_{e_2})_+$.
	
	Let $\epsilon, \delta>0$. Consider the element $\overline{X} \in L^1(M, \tau)$ and projections  $r_n$ in $M$ as in Remark \ref{rem on conv in L1}. Let $Z:= e_1 \overline{X} e_1$ and note that for all $n \in \N$
	\begin{align*}
		r_n (A_n(X) - \overline{X}) r_n = r_n \Big( e_1 A_n(X) e_1 - e_1 \overline{X} e_1 \Big) r_n +& r_n e_1 A_n(X) e_2 r_n + r_n e_2 A_n(X) e_1 r_n \\
		& + r_n e_2 A_n(X) e_2 r_n .
	\end{align*}
	We also note that for all $n \in \N$, $r_n \Big( e_1 A_n(X) e_1 - e_1 \overline{X} e_1 \Big) r_n = p \Big( e_1 A_n(X) e_1 - e_1 \overline{X} e_1 \Big) p$ and $r_n e_2 A_n(X) e_2 r_n = q_n e_2 A_n(X) e_2 q_n$.
	
	Hence the result follows from Remark \ref{rem on conv in L1} and Lemma \ref{rne1A_ne2rn is bdd}.
\end{proof}

%
%
%

\newcommand{\etalchar}[1]{$^{#1}$}
\providecommand{\bysame}{\leavevmode\hbox to3em{\hrulefill}\thinspace}
\providecommand{\MR}{\relax\ifhmode\unskip\space\fi MR }
\providecommand{\MRhref}[2]{%
	\href{http://www.ams.org/mathscinet-getitem?mr=#1}{#2}
}
\providecommand{\href}[2]{#2}

\end{document}